\documentclass[reqno,10pt]{amsart}


\usepackage[a4paper,left=25mm,right=25mm,top=30mm,bottom=30mm,marginpar=25mm]{geometry} 
\usepackage{amsmath}
\usepackage{amssymb}
\usepackage{amsthm}
\usepackage{amscd}
\usepackage[ansinew]{inputenc}
\usepackage{color}
\usepackage[final]{graphicx}
\usepackage{tikz}
\usepackage{bbm}
\usepackage{comment}
\usepackage{pgfplots}
\usetikzlibrary{intersections, pgfplots.fillbetween}

\usetikzlibrary{patterns} 
\usetikzlibrary{positioning}
\usetikzlibrary{calc}

\usetikzlibrary{intersections}

\usepackage{float}
 \usepackage[colorlinks=true, pdfstartview=FitV, 
 linkcolor=olive
 ,             citecolor=olive
 , urlcolor=olive
 ]{hyperref}


\renewcommand{\epsilon}{\varepsilon}

\numberwithin{equation}{section}

\newtheoremstyle{thmlemcorr}{10pt}{10pt}{\itshape}{}{\bfseries}{.}{10pt}{{\thmname{#1}\thmnumber{ #2}\thmnote{ (#3)}}}
\newtheoremstyle{thmlemcorr*}{10pt}{10pt}{\itshape}{}{\bfseries}{.}\newline{{\thmname{#1}\thmnumber{ #2}\thmnote{ (#3)}}}
\newtheoremstyle{defi}{10pt}{10pt}{\itshape}{}{\bfseries}{.}{10pt}{{\thmname{#1}\thmnumber{ #2}\thmnote{ (#3)}}}
\newtheoremstyle{remexample}{10pt}{10pt}{}{}{\bfseries}{.}{10pt}{{\thmname{#1}\thmnumber{ #2}\thmnote{ (#3)}}}
\newtheoremstyle{ass}{10pt}{10pt}{}{}{\bfseries}{.}{10pt}{{\thmname{#1}\thmnumber{ A#2}\thmnote{ (#3)}}}

\theoremstyle{thmlemcorr}
\newtheorem{theorem}{Theorem}
\numberwithin{theorem}{section}
\newtheorem{lemma}[theorem]{Lemma}
\newtheorem{corollary}[theorem]{Corollary}
\newtheorem{proposition}[theorem]{Proposition}

\theoremstyle{thmlemcorr*}
\newtheorem{theorem*}{Theorem}
\newtheorem{lemma*}[theorem]{Lemma}
\newtheorem{corollary*}[theorem]{Corollary}
\newtheorem{proposition*}[theorem]{Proposition}
\newtheorem{problem*}[theorem]{Problem}
\newtheorem{conjecture*}[theorem]{Conjecture}

\theoremstyle{defi}
\newtheorem{definition}[theorem]{Definition}

\theoremstyle{remexample}
\newtheorem{remark}[theorem]{Remark}

\theoremstyle{ass}

\newcommand{\om}{\omega}
\newcommand{\Om}{\Omega}

\newcommand{\interiorR}{\mathring R}

\newcommand{\Acal}{\mathcal{A}}

\newcommand{\Dcal}{\mathcal{D}}

\newcommand{\Fcal}{\mathcal{F}}

\newcommand{\Gcal}{\mathcal{G}}

\newcommand{\Ical}{\mathcal{I}}

\newcommand{\Lcal}{\mathcal{L}}

\newcommand{\Ocal}{\mathcal{O}}

\newcommand{\SF}{\mathcal{F}^{\mathrm{st}}}

\newcommand{\Ucal}{\mathcal{U}}
\newcommand{\Vcal}{\mathcal{V}}

\newcommand{\NN}{\mathbb{N}}

\DeclareMathOperator{\diverg}{div}
\DeclareMathOperator{\curl}{curl}
\DeclareMathOperator{\dist}{dist}
\DeclareMathOperator{\rank}{rank}

\DeclareMathOperator{\supp}{supp}

\newcommand{\sfrac}[2]{#1/#2}

\newcommand{\N}{\mathbb{N}}
\newcommand{\R}{\mathbb{R}}
\newcommand{\RR}{\mathbb{R}}

\newcommand{\Q}{\mathbb{Q}}
\newcommand{\Z}{\mathbb{Z}}

\newcommand{\lsc}{\mathrm{sc}}
\newcommand{\qc}{\mathrm{qc}}
\newcommand{\lip}{\mathrm{Lip}}
\newcommand{\loc}{\mathrm{loc}}
\newcommand{\ho}{\mathrm{hom}}

\newcommand{\per}{\mathrm{per}}

\newcommand{\weakly}{\rightharpoonup}
\newcommand{\weaklystar}{\overset{*}\rightharpoonup}

\newcommand{\eps}{\epsilon}
\newcommand{\epsi}{\epsilon}

\newcommand{\BBB}{\color{black}}

\allowdisplaybreaks

 \definecolor{Korange}{rgb}{0.945,0.561,0}
  \definecolor{Kgreen}{rgb}{0.3,0.5,0.3}

 \newcommand{\LM}[2]{\hbox{\vrule width.4pt \vbox to#1pt{\vfill
\hrule width#2pt height.4pt}}}
\newcommand{\LLL}{{\mathchoice {\>\LM{7}{5}\>}{\>\LM{7}{5}\>}{\,\LM{5}{3.5}\,}{\,\LM{3.35}{2.5}\,}}}


\title[\(\Gamma\)-Convergence and stochastic homogenization in the \(\Acal\)-free setting]{\(\Gamma\)-convergence and stochastic homogenization\\ for functionals in
the \(\Acal\)-free setting}

\author[Gianni Dal Maso]{Gianni Dal Maso}
\address[Gianni Dal Maso]{Gianni Dal Maso\\ SISSA, Via Bonomea 265, 34136 Trieste, Italy}
\email{dalmaso@sissa.it}

\author[Rita Ferreira]{Rita Ferreira}
\address[Rita Ferreira]{King Abdullah University of Science and Technology (KAUST), CEMSE Division, Thuwal 23955-6900, Saudi Arabia}
\email{rita.ferreira@kaust.edu.sa}

\author[Irene Fonseca]{Irene Fonseca}
\address[Irene Fonseca]{Irene Fonseca\\ Carnegie Mellon University, 5000 Forbes Avenue, Pittsburgh,
PA 15213}
\email{fonseca@andrew.cmu.edu}

\usepackage[normalem]{ulem}

\begin{document}

 
\maketitle

 \begin{abstract}  
 \vspace{-12pt}   
We obtain a compactness result for \(\Gamma\)-convergence of integral functionals defined on \(\mathcal A\)-free vector fields.
This is used to study homogenization problems for these functionals without periodicity assumptions.
More precisely, we prove that the homogenized integrand can  be obtained by taking limits of minimum values of suitable 
minimization problems on large cubes, when the side length of these cubes tends to \(+\infty\), assuming that these limit values do not depend on the center of the cube. 
Under the usual stochastic periodicity assumptions, this result is then used to solve the stochastic homogenization problem by means of the subadditive ergodic theorem. 
                     
\vspace{8pt}

 \noindent\textsc{MSC (2020):} 35B27, 
 60H30,  
 49J45, 
 74A40.    
 
 \noindent\textsc{Keywords:} homogenization, \(\Gamma\)-convergence,  stochastic homogenization, composites,  \(\mathcal A\)-free vector fields.

 \end{abstract}

\tableofcontents

\section{Introduction}\label{sect:intro}
Many problems in continuum mechanics and electromagnetism lead
to the study of vector fields \(u\in L^p(D;\R^d)\) that satisfy
a differential constraint of the form 
\begin{equation}\label{diff constr}
\sum_{i=1}^{N} A^i\partial_i u=0\quad\text{in } D,
\end{equation}
where  \(D\subset \R^N\) is a bounded open set and \(A^i\) are
\(l\times d\)-matrices that fulfil the constant-rank property
(see \eqref{eq:rank}).  These vector fields are called \(\Acal\)-free.
The theory of compensated compactness,  developed in  \cite{Mur,
Tar1, Tar2, Tar3, Tar4,
Tar5}, provides powerful tools for their
analysis and has recently been extended to \(\Acal\)-free measures
\cite{Arr, DePh-Rin}. 
 
 When \(f\) is a Carath\'eodory function satisfying the usual
\(p\)-growth condition (see \eqref{eq:pgrowth}),
 the study of the minimization of the integral functional
\begin{equation}\label{functional F intro}
F(u,D):=\int_D f(x,u(x))\,dx
\end{equation}
among all function \(u\in L^p(D;\R^d)\) that  satisfy the differential
constraint \eqref{diff constr} leads to the notion of \(\Acal\)-quasiconvexity
introduced in \cite{FoMu99} (see Definition \ref{def:Aqcx}), and
inspired by   a slightly different definition found in \cite{Dac}.
This condition is necessary and sufficient for  lower semicontinuity
of \eqref{functional F intro}  with respect to the weak topology of \(L^p(D;\R^d)\) under the
constraint \eqref{diff constr}.  Further results on \(\Acal\)-quasiconvex
functionals can be found in
\cite{Bai-Che-Mat-San, Arr-DePh-Rin} for the   linear growth
case, in \cite{Con-Gme} in the context of partial regularity
of minimizers, in \cite{Bai-Mat-San} in connection with Young
measures, in \cite{Dac-Fon, Dem} when two operators are present,
in \cite{Fon-Leo-Mul} with different exponents in the bounds
 of \(f\) from below and from above, in \cite{Kou-Vik} in connection
with {G}\aa rding inequalities, in \cite{KKKP} for the case of
boundary \(\Acal\)-quasiconvexity, in \cite{Pro} for an extended-valued
function \(f\), in \cite{Rai} for potentials for  \(\Acal\)-quasiconvexity,
and in \cite{Rai02} for the study of relaxation via \(\Acal\)-quasiconvex
       envelopes. \BBB

When \(x\mapsto f(x,\xi)\) is periodic in \(x\), the limit behavior
as \(\eps\to0^+\)
of the minimizers of the functionals
\begin{equation}\label{Feps intro}
F_\eps(u,D):=\int_D f(\tfrac{x}{\eps},u(x))\,dx
\end{equation}
is studied in \cite{BrFoLe00} using \(\Gamma\)-convergence (see
also \cite{Mat-Mor-San} for the  \(p=1 \) case). More precisely,
under an additional \(p\)-Lipschitz 
condition (see \eqref{eq:pLip22}), the family of functionals
\((F_\eps(\cdot,D))_{\eps>0}\), subject to
the differential constraint \eqref{diff constr},
\(\Gamma\)-converges as \(\eps\to 0^+\), with respect to the
weak topology of \(L^p(D;\R^d)\), to a homogenized functional
of the form
\begin{equation}\label{Fhom intro}
F_{\ho}(u,D):=\int_D f_{\ho}(u(x))\,dx,
\end{equation}
subject to the same differential constraint. The homogenized integrand
\(f_{\ho}\) is obtained from \(f\) by solving some auxiliary
minimum problems for \(F(\cdot,Q_r)\) on cubes  \(Q_r\) whose
side length \(r\) tends to infinity. The periodic case was  further developed in
\cite{DaFo16,DaFo16b}, where the authors established 
periodic homogenization results for integral energies under
periodically oscillating or space-dependent differential constraints.

\medskip\noindent \textbf{Aim and main compactness theorem.}
The aim of this paper is to study, more  generally, \(\Gamma\)-convergence
of sequences
 \((F_k)_{k\in \NN}\) of functionals of the form 
\begin{equation}\label{functional Fk intro}
F_k(u,D):=\int_D f_k(x,u(x))\,dx
\end{equation}
subject to the differential constraint \eqref{diff constr}.
 We assume that the integrands \(f_k\)
satisfy
\(p\)-growth and \(p\)-Lipschitz conditions with constants independent
of \(k\),  and 
we prove  a compactness result (see Corollary~\ref{cor:Afree
compactness}): there exist a subsequence, which we do not relabel,
and a functional \(F\) of the form \eqref{functional F intro}
such that for every bounded open set \(D\subset\R^N\), the sequence
of functionals \((F_k(\cdot,D))_{k\in\NN}\), subject to
the differential constraint \eqref{diff constr},
\(\Gamma\)-converges to \(F(\cdot,D)\) with respect to the weak
topology of \(L^p(D;\R^d)\).
Under different hypotheses, \(\Gamma\)-convergence 
results for functionals of the form \eqref{functional Fk intro}
were studied in \cite{Kre-Rin} in the context of dimension reduction
problems.

\medskip\noindent \textbf{Strategy of the proof.}  Following
an idea introduced in \cite{AnDaZe14},  we  prove this
result
 by  first studying  the \(\Gamma\)-convergence
of the sequence of functionals \eqref{functional Fk intro}
without differential constraint  but  with respect to
a topology in \(L^p(D;\R^d)\) that takes into account the 
convergence of
%
\(
\sum_{i=1}^{N} A^i\partial_i u
\)
%
in \(W^{-1,p}(D;\R^l)\) (see Theorem \ref{thm:Gamma-unconst}).

The proof is based on the usual localization technique 
for \(\Gamma\)-convergence and on a new integral representation
result (see Theorem \ref{thm:unconstrained main}).

To obtain from this result the \(\Gamma\)-convergence in the
\(\Acal\)-free setting, we use a modification procedure
introduced in \cite{FoMu99}, which allows us to replace a sequence
\((u_k)_{k\in\NN}\) with
\(
\sum_{i=1}^{N} A^i\partial_i u_k \to 0
\)
in \(W^{-1,p}(D;\R^l)\)
by a sequence \((v_k)_{k\in\NN}\) with the same limit in the
weak topology of
 \(L^p(D;\R^d)\) and such that
\(
\sum_{i=1}^{N} A^i\partial_i v_k= 0
\)
in \(D\)  for every \(k\), with a negligible modification of
the values of \(F_k\) (see 
Lemmas \ref{lem:equiint} and \ref{lem:pequiint}).

\medskip\noindent
\textbf{Characterising the \(\Gamma\)-limit integrand.} 
 When \(\xi\mapsto f(x,\xi)\) is \(\Acal\)-quasiconvex for a.e.~\(x\in\R^N\), 
 we reconstruct, for a.e.~\(x\in\R^N\) and every \(\xi\in\RR^d\),   the  value of  \(f(x,\xi)\)  via
 the infima of some 
auxiliary minimum problems for \(F(\cdot,Q_\rho(x))/\rho^N\)
on cubes with center \(x\) 
and side length \(\rho\), taking the limit as \(\rho\to 0^+\)
(see Theorem \ref{thm:formula}).
This allows us to characterize the integrand of the \(\Gamma\)-limit
of a sequence 
\eqref{functional Fk intro} by taking limits, as \(k\to\infty\),
of 
the infima of these auxiliary minimum problems for \(F_k(\cdot,Q_\rho(x))/\rho^N\)
(see Theorem \ref{thm:f from M}).  We further  prove
a technical variant of this result (see Theorem \ref{thm:f from
M eta})
in which the 
infima of the auxiliary minimum problems satisfy a subadditivity
condition, preparing the ground for
stochastic applications.

\medskip\noindent
\textbf{Homogenisation without periodicity.}  The preceding
 characterization of the integrand of the \(\Gamma\)-limit
is then used to study the 
homogenization problem for \eqref{Feps intro} without periodicity
assumptions. After a change of variables, the
functionals
\(F_\eps(\cdot,Q_\rho(x))/\rho^N\) are transformed into
\(F(\cdot,Q_{\rho/\eps}(x/\eps))/(\rho/\eps)^N\). Therefore, the
previous results 
show that the family of functionals \((F_\eps(\cdot,D))_{\eps>0}\),
subject to
the differential constraint \eqref{diff constr},
\(\Gamma\)-converges as \(\eps\to 0^+\), with respect to the
weak topology of \(L^p(D;\R^d)\), to the functional \eqref{Fhom
intro} subject to the same differential constraint, assuming only that
there exists
the limit, as \(r\to+\infty\), of the infima of these auxiliary
minimum problems for 
\(F(\cdot,Q_{r}(rx))/r^N\), and that this limit, which defines
\(f_{\ho}\), does not depend on \(x\)
(see Theorem \ref{thm:homogenization eta}).
 These conditions are satisfied not only when \(x\mapsto f(x,\xi)\) is periodic on 
\(\R^{N}\) for every \(\xi\in \R^{d}\), but also also for suitable perturbations of the periodic
case (see Proposition~\ref{periodic + compact support}).

\medskip\noindent
\textbf{Stochastic homogenisation.}  When \(f=f(\omega,x,\xi)\)
depends also on a variable \(\omega\) running on a probability
space, 
under the standard assumptions of stochastic homogenization (see
Definition \ref{sf}), we obtain that
the limits that define \(f_{\ho}(\omega,\xi)\) exist almost surely;
hence,
the family of functionals \((F_\eps(\omega,\cdot,D))_{\eps>0}\),
subject to
the differential constraint \eqref{diff constr},
\(\Gamma\)-converges as \(\eps\to 0^+\), with respect to the
weak topology of \(L^p(D;\R^d)\), to the functional
\begin{equation*}
F_{\ho}(\omega, u,D):=\int_D f_{\ho}(\omega, u(x))\,dx
\end{equation*}
subject to the same differential constraint (see Theorem \ref{prop:
stocf}). Finally, if the stochastic process is ergodic, then
it can be shown that the homogenized integrand does not depend on \(\omega\).

\medskip
Overall, we provide a unified \(\Gamma\)-convergence framework
for integral functionals with
\(\Acal\)-free constraints, covering deterministic, non-periodic,
and stochastic settings in a single
theory.

\section{Notation and preliminaries}\label{sect:notation}

Throughout this paper, \(N,d,l\in\NN\) and \(p, q\in(1,+\infty)\) are fixed,
with \(\frac1p+\frac1q=1\).
We define \(\Ocal(\RR^N)\) to be the collection of all bounded open  subsets of \(\RR^N\) and, for  \(D\in \Ocal(\RR^N)\), we define \(\Ocal(D):=\{D\cap U\colon U \in  \Ocal(\RR^N)\} \) to be the collection of all open subsets of \(D\). For every \(x\in\R^N\) and \(\rho>0\), we consider
the open cube \(Q_\rho(x)\)  with center \(x\), side length \(\rho\), and sides parallel to the coordinate axes.

We use the standard notation for Lebesgue and Sobolev spaces. In particular, for \(m\in\NN\) and \(D\in \Ocal(\RR^N)\), \(W^{-1,p}(D;\RR^m)\)
is the dual of \(W^{1,q}_{0}(D;\RR^m)\).  We also consider the dual of \(W^{1,q}(D;\RR^m)\),
which we  denote by  \(\widetilde W^{-1,p}(D;\RR^m)\). In both cases,
the duality product will be denoted by \(\langle\cdot,\cdot\rangle\).
The norm used in \(W^{1,q}_{0}(D;\RR^m)\) is \(\|u\|_{W^{1,q}_{0}(D;\RR^m)}:=\|\nabla u\|_{L^{q}(D;\RR^{m\times N})}\),
 and the norm used in \(W^{1,q}(D;\RR^m)\) is \(\|u\|_{W^{1,q}(D;\RR^m)}:=(\|u\|^{q}_{L^{q}(D;\RR^{m})}+\|\nabla u\|^{q}_{L^{q}(D;\RR^{m\times N})})^{1/q}\).
The norms in \(W^{-1,p}(D;\RR^m)\) and  \(\widetilde W^{-1,p}(D;\RR^m)\) are the corresponding dual norms.

Several of our  results involve integral functionals whose fields
are subject to   linear partial differential constraints with constant coefficients, know as the \(\Acal\)-free setting as mentioned in the Introduction, which we now make precise. For \(i\in\{1,\dots,N\}\), let \(A^i\in \RR^{l\times d}\) be \(l\times
d\) real-valued matrices such that
there exists \(r\in\NN\) satisfying
\begin{equation}\label{eq:rank}
\rank \left( \sum_{i=1}^N A^i w_i\right) = r
\end{equation}
for all \(w\in\RR^N\setminus\{0\}\). This condition is referred to as the constant-rank property.

For every \(u\in L^p_{\loc}(\R^{N};\RR^d)\), let \(\Acal u\) be the \(\RR^{l}\)-valued distribution on \(\R^{N}\) defined by
\begin{equation}\label{eq:Au on RN}
\langle\Acal u,\psi\rangle:=-\sum_{i=1}^{N}\int_{\RR^{N}}(A^iu)\cdot \partial_i\psi\,
dx\quad\text{for every }\psi\in C^{\infty}_{c}(\R^{N};\RR^{l}).
\end{equation}

Given \(D\in \Ocal(\RR^N)\),
we consider the  operators 
 \[\Acal_D\colon L^p(D;\RR^d)\to W^{-1,p}(D;\RR^{l})\quad  \text{and} \quad\widetilde \Acal_D\colon L^p(D;\RR^d)\to \widetilde W^{-1,p}(D;\RR^{l})\]  that map each \(u\in L^p(D;\RR^d)\) into the 
elements  \(\Acal_D u\) of \(W^{-1,p}(D;\RR^{l})\)  and \(\widetilde\Acal_D u\)  of \(\widetilde W^{-1,p}(D;\RR^{l})\) defined, respectively, by
\begin{eqnarray}\label{eq:Au}
&&\langle\Acal_D u,\psi\rangle:=-\sum_{i=1}^{N}\int_{D}(A^iu)\cdot \partial_i\psi\,
dx\quad\text{for every }\psi\in W^{1,q}_{0}(D;\RR^{l}),
\\
&&\langle\widetilde\Acal_D u,\psi\rangle:=-\sum_{i=1}^{N}\int_{D}(A^iu)\cdot \partial_i\psi\,
dx\quad\text{for every }\psi\in  W^{1,q}(D;\RR^{l}).\label{eq:widetildeAu}
\end{eqnarray}

In particular, \(\ker\Acal_D\) is the set of all \(u\in L^p(D;\RR^d)\) such
that
\begin{equation*}
\sum_{i=1}^{N}\int_{D}(A^iu)\cdot \partial_i\psi\, dx=0\quad\text{for every
}\psi\in W^{1,q}_{0}(D;\RR^{l}),
\end{equation*}
in which case we either write \(\Acal_D u =0\) or \(u\in \ker\Acal_D \).  Moreover,  we have   that
\begin{equation}
\label{eq:normDiv}
\begin{aligned}
\Vert \Acal_D u \Vert_{W^{-1,p}(D;\RR^{l})} = 
\Vert\diverg (Au)\Vert_{W^{-1,p}(D;\RR^{l})},
\end{aligned}
\end{equation}
where \(Au(x):=(A^1u(x),...,A^Nu(x))\in \RR^{l\times N}\) for
\(x\in D\).

The wave cone (or characteristic cone) \(\Lambda\) associated with the operator \(\Acal\) is the subset of \(\R^d\) defined by
\begin{equation}\label{eq:wave}
\Lambda:=\bigcup_{w\in\RR^N\setminus\{0\}} \ker\left( \sum_{i=1}^N A^i w_i\right).
\end{equation}

As extensively studied in \cite{FoMu99}, the weak lower semicontinuity of functionals defined on \(\ker\Acal_D\)  is intimately related to the notion of \(\Acal\)-quasiconvexity, which we recall next. 

\begin{definition}[\(\Acal\)-quasiconvex functions]\label{def:Aqcx}
Let \(Q\subset \RR^N\) be a cube and  \(g \colon \mathbb R^d \to \mathbb R\)  a locally bounded Borel function.
We say that  \(g\) is \(\Acal\)-quasiconvex if
\begin{equation*}
\begin{aligned}
g(\xi) \leq \frac1{|Q|}\int_{Q} g(\xi + w(x))\,dx
\end{aligned}
\end{equation*}
 for every \(\xi\in\RR^d\) and
\(w \in C^\infty(\mathbb R^N;\mathbb R^d)\), with  \(w\) \(Q\)-periodic, \(\Acal w=0\) in \(\R^{N}\), and
\(\int_{Q}
w(y)dy = 0\). 
\end{definition}

\begin{remark}\label{rmk:Aqcx} By a change of variables, we see that the previous definition does not depend on the choice of the cube \(Q\). In the unconstrained case, when \(\Acal = 0\), 
\(\Acal\)-quasiconvexity reduces to convexity, while when \(\Acal=\curl\), 
\(\Acal\)-quasiconvexity reduces to quasiconvexity in the sense of Morrey (cf.~\cite{FoMu99}).
By virtue of Jensen's inequality, every convex function is \(\Acal\)-quasiconvex.
\end{remark}

For every \(D\in\Ocal(\RR^N)\), let \(\Vert\cdot\Vert^{\Acal}_D\) be the
norm on \(L^p(D;\RR^d)\) involving  the operator \(\Acal_D\) and defined
by
\begin{equation*}
\Vert u\Vert_D^\Acal:=\Vert u\Vert_{W^{-1,p}(D;\RR^d)}+ \Vert \Acal_D
u\Vert_{ W^{-1,p}(D;\RR^l)}.
\end{equation*}
Note that \(L^p(D;\RR^d)\) endowed with this norm is separable.

\begin{remark}\label{rmk:Lambda-convex}
If \(g\) is \(\Acal\)-quasiconvex and satisfies the \(p\)-growth condition
\begin{equation}\label{eq:pgrowth0}
\frac1{c_0}|\xi|^p - c_0 \leq g(\xi)\leq c_0(1+ |\xi|^{p})\quad\hbox{for  every }\xi\in\R^d
\end{equation}
for  some constant \(c_0\ge1\), then \(g\) is  \(\Lambda\)-convex on \(\R^d\), i.e., 
\begin{equation*}
g(\theta \xi_1+ (1-\theta)\xi_2) \le \theta g(\xi_1) + (1-\theta)g(\xi_2)
\end{equation*}
for every \(\xi_1\), \(\xi_2\in \R^d\) with \(\xi_1-\xi_2\in\Lambda\) and every \(\theta\in[0,1]\) (see, e.g., the proof of \cite[Lemma 2.19]{Arr-DePh-Rin}).
If the vector space \(\mathrm{span}(\Lambda)\) generated by the wave cone \(\Lambda\) coincides with \(\R^d\), then 
\(\Lambda\)-convexity and \eqref{eq:pgrowth0} imply that there exists a constant \(\underline c_1>0\), depending only on \(c_1\), \(p\), and \(d\), such that
the \(p\)-Lipschitz condition
\begin{equation}\label{eq:pLip222}
|g(\xi_1) - g(\xi_2)| \leq  \underline c_1 (1+ |\xi_1|^{p-1} +  |\xi_2|^{p-1})|\xi_1 - \xi_2|
\end{equation}
holds for every \(\xi_1\), \(\xi_2\in \R^d\) (see, e.g., \cite[Lemma 2.3]{KirKris}).

The equality \(\mathrm{span}(\Lambda)=\R^d\) is satisfied in many interesting cases, for instance when \(\Acal=\curl\) or \(\Acal=\diverg\) (see  \cite[Remark 3.3]{FoMu99} for the precise definitions).
\end{remark}

\begin{remark}\label{rmk:constants}
	For functions $g\colon \R^d\to [0,+\infty)$, the  \(p\)-Lipschitz condition can be expressed in many equivalent ways.  Indeed, if $g$ satisfies  \eqref{eq:pLip222}, then we can find a constant $\bar c_1>0$, 
depending only on \(\underline c_1\) and \(p\), such that
	\begin{equation*}
		|g(\xi_1) - g(\xi_2)| \leq \bar c_1\big(1+ (|\xi_1|\land |\xi_2|)^{p-1} +  |\xi_1-\xi_2|^{p-1}\big)|\xi_1 - \xi_2|
	\end{equation*}
holds for every \(\xi_1, \xi_2\in\RR^d\), where \(a\land b:=\min\{a,b\}\). If $g$ satisfies also \eqref{eq:pgrowth0}, the previous inequality implies that there exists a constant \(\hat c_1\ge 1\), depending only on \(c_0\), \( \bar c_1\), and \(p\), such that
 \begin{equation}\label{eq:pLipg}
 	|g(\xi_1) - g(\xi_2)| \leq \hat c_1\big(1+ (g(\xi_1)\land  g(\xi_2))^{\frac{p-1}p} + |\xi_1-\xi_2|^{p-1}\big)|\xi_1
 	- \xi_2|
 \end{equation}
 holds for every \(\xi_1, \xi_2\in\RR^d\).
 
 Conversely, if $g$ satisfies  \eqref{eq:pgrowth0}  and \eqref{eq:pLipg}, we can find a constant \(\check c_1>0\), depending only on \(c_0\), \(\hat c_1\), and \(p\), such that
 \eqref{eq:pLip222} holds with $\underline c_1$ replaced by $\check c_1$.
 
 In the rest of the paper, we prefer to express the \(p\)-Lipschitz condition in the rather unusual form 
 \eqref{eq:pLipg}, because in this inequality the constant is stable under \(\Gamma\)-convergence, while this is not true for the more usual form
 \eqref{eq:pLip222}.
\end{remark}

Next, we introduce the class of integrands that we consider in our analysis. 
Throughout the paper $c_0$ and $c_1$ are fixed constants with $c_0\geq1$ and $c_1\geq  \hat c_1\geq 1$, where $\hat c_1$ is the constant in \eqref{eq:pLipg} corresponding to the constant $\bar c_1$ obtained from $\underline c_1$ in \eqref{eq:pLip222}. 

\begin{definition}\label{def:Frak}
 Let \(\Fcal\) be
the collection of all Carath\'eodory functions  \(f\colon \RR^{N} \times \RR^{d}
\to [0,+\infty)\)  satisfying
the following \(p\)-growth condition:
\begin{equation}\label{eq:pgrowth}
\frac1{c_0}|\xi|^p - c_0 \leq f(x,\xi)\leq c_0(1+ |\xi|^{p})
\end{equation}
for  every \(x\in \RR^N\) and \(\xi\in\RR^d\).
 Let \(\Fcal_{\lip}\) be the collection of all \(f\in\Fcal\) such that
\begin{equation}\label{eq:pLip}
|f(x,\xi_1) - f(x,\xi_2)| \leq c_1\big(1+ (f(x,\xi_1)\land  f(x,\xi_2))^{\frac{p-1}p} + |\xi_1-\xi_2|^{p-1}\big)|\xi_1
- \xi_2|,
\end{equation}
for every \(x\in \RR^N\) and \(\xi_1, \xi_2\in\RR^d\).
Finally, let  \(\Fcal_{\qc}\)  be the collection of all functions \(f\in \Fcal\) such that
\(\xi\mapsto f(x,\xi)\) is \(\Acal\)-quasiconvex for 
every  \(x\in\R^N\).
 \end{definition}
 
We observe that there is no loss of generality in the constraint on the constant $c_1$ in the preceding definition because if \eqref{eq:pLip} holds for a constant $c_1$, then it also holds for any constant $c\geq c_1$. 

Under our assumptions on \(c_0\), \(c_1\), and \(p\), it can
be shown that the function \(f(x,\xi):=|\xi|^p\) belongs to \(\Fcal_{\qc}\).

\begin{remark}\label{rem: exchangind pLip}
	By exchanging the roles of \(\xi_1\) and \(\xi_2\), we see that \eqref{eq:pLip} is equivalent to
	\begin{equation}\label{eq:pLip asymmetric}
		f(x,\xi_1) \le  f(x,\xi_2)+ 
		c_1\big(1+ f(x,\xi_2)^{\frac{p-1}p} + |\xi_1-\xi_2|^{p-1}\big)|\xi_1
		- \xi_2|
	\end{equation}
	for  every \(x\in \RR^N\) and every \( \xi_1, \xi_2\in\RR^d\).
\end{remark}

\begin{remark}
	As discussed in Remark~\ref{rmk:constants},  there exists a constant \( \check c_1>0\), depending only on \(c_0\), \(c_1\), and \(p\), such that \eqref{eq:pgrowth} and \eqref{eq:pLip} imply 
\begin{equation}\label{eq:pLip22}
|f(x,\xi_1) - f(x,\xi_2)| \leq \check c_1  (1+ |\xi_1|^{p-1} +  |\xi_2|^{p-1})|\xi_1 - \xi_2|
\end{equation}
for  every \(x\in \RR^N\) and every \(\xi_1, \xi_2\in\RR^d\).
  \end{remark}

 \begin{remark}\label{rem: qc implies Lip} By Remarks~\ref{rmk:Lambda-convex}  
 and \ref{rmk:constants},
in the case where  the vector space \(\mathrm{span}(\Lambda)\) generated by the wave cone \(\Lambda\) coincides with \(\R^d\),  we have that every \(f\in\Fcal_{\qc}\)
satisfies
\eqref{eq:pLip} with \(\hat c_1\) in place of \(c_1\). Since we are assuming throughout that \(\hat c_1\le c_1\), we conclude that 
\(\Fcal_{\qc}\subset \Fcal_{\lip}\) in this case.
\end{remark}

We now introduce the collection of functionals that correspond to integrands in \(\Fcal\).
 
\begin{definition}\label{def:Ffrak}Let \(\{L^p,\Ocal\}\) be the set of pairs \((u,D)\) with  
\(u\in L^p(D,\RR^d)\) and  \(D\in \Ocal(\RR^N)\), and let \(\Ical\) be
the collection of all functionals \(F\colon\{L^p,\Ocal\}\to [0,+\infty)\)
satisfying the following properties for every \(D\in \Ocal(\RR^N)\) and  \(u\in L^p(D;\R^d)\):
\begin{itemize}
\item[(a)]\(\displaystyle \tfrac1{c_0}\Vert u\Vert_{L^p(D;\RR^d)}^p - c_0|D| \le F(u,D) \leq c_0 \big(|D|+ \Vert
u\Vert_{L^p(D;\RR^d)}^{p} \big),\)
\smallskip

\item[(b)] the set function \(B\mapsto F(u,B)\) defined
for every \(B\in \Ocal(D)\) can be extended to a nonnegative
measure defined on all Borel subsets of
\(D\).
\end{itemize}
Here, and henceforth, if  \(u\in L^p(D;\RR^d)\) and  \(B\in \Ocal(D)\), we  simply write \(F(u,B)\) instead of
\(F(u|_B,B)\).

Let \(\Ical_{\lip}\) be the collection of all functionals \(F\in\Ical\) such that 
\begin{itemize}
\item[(c)] \(\displaystyle |F(u,D) - F(v,D)| \leq  c_1\big(|D|^{\frac{p-1}p} + (F(u,D)\land F(v,D))^{\frac{p-1}p}
+\Vert u-v\Vert_{L^p(D;\R^d)}^{p-1}\big)
\Vert u-v\Vert_{L^p(D;\R^d)}
\)
\end{itemize}
for every \(D\in \Ocal(\RR^N)\) and \(u\), \(v\in L^p(D;\R^d)\).

Finally, let \(\Ical_{\lsc}\) be the collection of all functionals \(F\in\Ical\) such that for every \(D\in\Ocal(\R^N)\), the
functional \(u\mapsto F(u,D)\) is lower semicontinuous for the topology induced on
\(L^p(D;\RR^d)\) by the norm \(\Vert\cdot\Vert^{\Acal}_D\). 
 \end{definition}
 
 \begin{remark}\label{weak Lp versus W-1p}
 Let \(F\in\Ical\).
 By the lower bound in  (a) of Definition \ref{def:Ffrak}, if \(D\in\Ocal(\R^N)\) and \((u_k)_{k\in \NN}\) is a sequence in \(L^p(D;\R^d)\) converging to
 \(u\in L^p(D;\R^d)\) in  the topology induced  by the norm \(\Vert\cdot\Vert^{\Acal}_D\) and such that
 \((F(u_k,D))_{k\in \NN}\) is bounded, then
 \(u_k\weakly u\) weakly in \(L^p(D;\R^d)\).
Therefore, \(F\in \Ical_{\lsc}\) if and only if we have
for every \(D\in\Ocal(\R^N)\) that
\begin{equation*}
F(u,D)\le \liminf_{k\to\infty}F(u_k,D)
\end{equation*}
for every \(u\in L^p(D;\R^d)\) and every sequence  \((u_k)_{k\in \NN}\subset L^p(D;\R^d)\) such that \(u_k\weakly u\)
weakly in \(L^p(D;\R^d)\) and \(\Acal_Du_k\to \Acal u\) strongly in \(W^{-1,p}(D;\R^l)\).  In particular, this inequality holds whenever \(u_k\to u\)
strongly in \(L^p(D;\R^d)\). Hence, for every \(F\in \Ical_{\lsc}\) and \(D\in\Ocal(\R^N)\) the functional \(F(\cdot,D)\)
is lower semicontinuous in the strong topology of \(L^p(D;\R^d)\). 
 \end{remark}

\begin{remark}\label{rem:Fcal gives Ffrac}
If \(f\in\Fcal\), then the functional \(F\) defined by
\begin{equation}\label{eq:Ffree}
F(u,D):=\int_D f(x, u(x))\, dx, \quad\text{for every }D\in\Ocal(\R^N)\text{ and }u\in L^p(D;\R^d),
\end{equation}
belongs to \(\Ical\).  If \(f\in\Fcal_{\lip}\), then \(F\in\Ical_{\lip}\).  Indeed, 
 condition (c) of Definition \ref{def:Ffrak}  follows from \eqref{eq:pLip} by using  H\"older's inequality. If \(f\in\Fcal_{\qc}\), then
 \(F\in\Ical_{\lsc}\) by \cite[Theorem 3.7]{FoMu99} and Remark \ref{weak Lp versus W-1p}.
 \end{remark}

\begin{remark}\label{rem: simpler pLip F}
By analogy with  Remark~\ref{rmk:constants}, there exists a constant \( \check c_1\), depending only on \(c_0\), \(c_1\), and \(p\), such that if \(F\in\Ical_{\lip}\), 
then
\begin{equation}\label{eq:pLip220}
|F(u,D) - F(v,D)| \leq  \check c_1\big(|D|^{\frac{p-1}p} +\Vert u\Vert_{L^p(D;\R^d)}^{p-1}
+\Vert v\Vert_{L^p(D;\R^d)}^{p-1}\big)
\Vert u-v\Vert_{L^p(D;\R^d)}
\end{equation}
for  every \(D\in \Ocal(\R^N)\)  and every \(u, v\in L^p(D;\R^d)\). Conversely, if   the growth conditions (a) of Definition~\ref{def:Ffrak}  
and the Lipschitz condition  \eqref{eq:pLip220} hold, then
there  exists a constant \(\hat c_1\), depending only on \(c_0\),  \( \check c_1\), and \(p\), such that
\begin{equation*}
 |F(u,D) - F(v,D)| \leq   \hat c_1 \big(|D|^{\frac{p-1}p} + (F(u,D)\land F(v,D))^{\frac{p-1}p}
+\Vert u-v\Vert_{L^p(D;\R^d)}^{p-1}\big)
\Vert u-v\Vert_{L^p(D;\R^d)},
\end{equation*}
which implies   that the Lipschitz condition (c) of Definition \ref{def:Ffrak} 
 holds with \(c_1\) replaced by \(\hat c_1\). As
before, our preference for  condition (c) of Definition \ref{def:Ffrak} 
is due to the fact that, unlike \eqref{eq:pLip220}, it is stable under \(\Gamma\)-convergence.
 \end{remark}

\begin{remark}\label{rem: exchangindg pLipF}
Similarly to Remark~\ref{rem: exchangind pLip}, we see by exchanging the roles of \(u\) and \(v\) that 
 the Lipschitz condition (c) of Definition \ref{def:Ffrak} 
 is equivalent to  the inequality 
\begin{equation}\label{eq:pLipF asymmetric}
F(u,D) \le  F(v,D) + c_1\big(|D|^{\frac{p-1}p} + F(v,D)^{\frac{p-1}p}
+\Vert u-v\Vert_{L^p(D;\R^d)}^{p-1}\big)
\Vert u-v\Vert_{L^p(D;\R^d)}
\end{equation}
for  every \(D\in \Ocal(\R^N)\)  and every \(u, v\in L^p(D;\R^d)\).
\end{remark}

 %


\section{\(\Gamma\)-convergence in the unconstrained setting}\label{sect:unconstrained-Gamma}

In this section, we prove a compactness result for  the family
of functionals  \(\Ical\)  introduced in Definition~\ref{def:Ffrak}   with respect to 
\(\Gamma\)-convergence without considering the constraint \(\Acal_D u=0\).
\begin{theorem}\label{thm:Gamma-unconst}  Consider the  three 
families of functionals \(\Ical\), \(\Ical_{\lip}\),  and \(\Ical_{\lsc}\) introduced in Definition~\ref{def:Ffrak}.  Let \((F_k)_{k\in\NN}\)  be a sequence in \(\Ical\).
Then, there exist a subsequence, which we do not relabel,
and a functional  \(F\in\Ical_{\lsc}\) such that for every \(D\in\Ocal(\R^N)\), the sequence
\((F_k(\cdot,D))_{k\in\NN}\)
\(\Gamma\)-converges to \(F(\cdot,D) \vphantom{ \frac1{c_0}\Vert u\Vert_{L^p(D;\RR^d)}^p }\)  with respect to
the
topology induced by \(\Vert\cdot\Vert_D^\Acal\) on \(L^p(D;\RR^d)\).

 If, in addition, \(F_k\in\Ical_{\lip}\) for every \(k\in\NN\), then \(F\in\Ical_{\lip}\).
\end{theorem}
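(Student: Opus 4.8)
The plan is to follow the standard localization method for $\Gamma$-convergence, applied to the separable metric space $(L^p(D;\R^d),\|\cdot\|_D^\Acal)$, and then upgrade the abstract $\Gamma$-limit to a functional lying in $\Ical_{\lsc}$ by verifying the structural properties \eqref{eq:pgrowth F}, \eqref{eq:pLip F}, and \eqref{eq:measure on D}. First I would fix a countable dense family $\Rcal\subset\Ocal(\R^N)$ of bounded open sets (for instance finite unions of open cubes with rational vertices) that is rich enough to approximate every $D\in\Ocal(\R^N)$ from inside, and is closed under finite unions and intersections. By a diagonal argument over $\Rcal$ — using that each space $L^p(D;\R^d)$ with the norm $\|\cdot\|_D^\Acal$ is separable, so $\Gamma$-convergence is sequentially compact on it (via the Fréchet–Urysohn property / the compactness theorem for $\Gamma$-convergence on separable metric spaces, e.g.\ Dal Maso's book) — I would extract a single subsequence, not relabeled, such that $F_k(\cdot,R)$ $\Gamma$-converges for every $R\in\Rcal$ to some functional $F'(\cdot,R)$.

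The core work is then the integral-representation step: I would invoke Theorem~\ref{thm:unconstrained main} (the new integral representation result announced in the excerpt) to produce a functional $F\in\Ical$ such that $F(\cdot,R)=F'(\cdot,R)$ for all $R\in\Rcal$; the hypotheses of that theorem are exactly the bounds and measure property that I would verify for $F'$ on $\Rcal$. Concretely, the uniform $p$-growth \eqref{eq:pgrowth F} for $F_k$ passes to the $\Gamma$-limit: the upper bound follows by testing with the constant recovery sequence $u_k\equiv u$ and the lower-semicontinuity of $u\mapsto\tfrac1{c_0}\|u\|_{L^p}^p-c_0|R|$ under $\|\cdot\|_R^\Acal$-convergence (here one uses Remark~\ref{weak Lp versus W-1p}: bounded energy forces weak $L^p$-convergence, and the $L^p$-norm is weakly lower semicontinuous); the upper bound for $F'$ is immediate from $\Gamma$-$\limsup\le$ value on the constant sequence. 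The $p$-Lipschitz bound \eqref{eq:pLip F} for $F'$ is obtained by a standard recovery-sequence argument: given $u,v$, take a recovery sequence $u_k$ for $u$, use $v_k:=u_k+(v-u)$ as a competitor for $v$ (noting $\|v_k-u_k\|_{L^p}=\|v-u\|_{L^p}$ and $\Acal_D v_k-\Acal_D u_k=\Acal_D(v-u)$ is fixed, so $v_k\to v$ in $\|\cdot\|_D^\Acal$), apply \eqref{eq:pLip F} to $F_k(u_k,R)$ and $F_k(v_k,R)$, and pass to the limit, exploiting that $F_k(u_k,R)\to F'(u,R)$; the term $(F_k(u_k,R)\wedge F_k(v_k,R))^{(p-1)/p}$ is controlled via the uniform growth bound. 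This yields \eqref{eq:pLipF asymmetric} hence \eqref{eq:pLip F} in the limit. For the measure property \eqref{eq:measure on D} on $\Rcal$, I would use the De Giorgi–Letta criterion: subadditivity, superadditivity, inner regularity, and local boundedness of $B\mapsto F'(u,B)$; subadditivity and inner regularity follow from the corresponding properties of the $F_k$ together with a fundamental-estimate / cut-off argument in the $\Acal$-free unconstrained topology, and superadditivity follows from the $\Gamma$-$\liminf$ inequality applied on disjoint pieces. Once Theorem~\ref{thm:unconstrained main} delivers $F\in\Ical$ agreeing with $F'$ on $\Rcal$, lower semicontinuity of $u\mapsto F(u,D)$ under $\|\cdot\|_D^\Acal$ holds because $\Gamma$-limits are lower semicontinuous, so $F\in\Ical_{\lsc}$.

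It remains to pass from the dense family $\Rcal$ to \emph{all} $D\in\Ocal(\R^N)$. Here I would argue that for fixed $u$, both the $\Gamma$-$\liminf$ and $\Gamma$-$\limsup$ of $F_k(\cdot,D)$ at $u$ coincide with $F(u,D)$, using the measure-theoretic inner and outer regularity of $F(u,\cdot)$ and a two-sided approximation: choosing $R',R''\in\Rcal$ with $R'\subset\subset D\subset\subset R''$ and $F(u,R''\setminus\cl{R'})$ small (possible since $F(u,\cdot)$ is a finite Borel measure absolutely continuous with respect to Lebesgue measure plus the $L^p$-density, by \eqref{eq:pgrowth F}), one localizes recovery sequences via the fundamental estimate for the unconstrained problem — glueing a recovery sequence on $R'$ with the constant sequence near $\partial D$ by a cut-off that is harmless in $\|\cdot\|_D^\Acal$ because it only perturbs $\Acal_D$ through lower-order ($L^p$) terms supported in the thin shell — to get the $\limsup$ bound $\Gamma\text{-}\limsup_k F_k(u,D)\le F(u,R'')\le F(u,D)+o(1)$, and symmetrically for the $\liminf$ bound using superadditivity on $R'$ and letting $R'\uparrow D$. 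I expect the main obstacle to be precisely this last localization/fundamental-estimate step in the $\|\cdot\|_D^\Acal$ topology: one must ensure that the cut-off gluing, which classically costs only a controllable error in energy, does not spoil the convergence of $\Acal_D u_k$; this is where the structure of $\|\cdot\|_D^\Acal = \|\cdot\|_{W^{-1,p}}+\|\Acal_D\cdot\|_{W^{-1,p}}$ is essential, since the commutator $[\Acal_D,\varphi]$ for a smooth cut-off $\varphi$ is a zeroth-order operator and hence maps $L^p$-bounded sequences into $W^{-1,p}$-precompact ones on the shell, making the perturbation negligible. The verification of the De Giorgi–Letta axioms for $F'(u,\cdot)$, and in particular superadditivity, is the other technically delicate point, but it is routine once the fundamental estimate is in hand.
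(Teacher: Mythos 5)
Your plan follows essentially the same architecture as the paper's proof: diagonal extraction of $\Gamma$-limits along a countable base $\Rcal$ of open sets, a fundamental-estimate/cut-off lemma to glue recovery sequences in the $\|\cdot\|_D^\Acal$ topology, direct verification of the growth and Lipschitz bounds by recovery-sequence arguments (passing \eqref{eq:pLipF asymmetric} to the limit, exactly as the paper does), and the De Giorgi--Letta criterion for the measure property. You also correctly identify the gluing step as the technical crux, and your remark that the commutator $[\Acal_D,\varphi]$ is zeroth order is precisely the reason the paper's Lemma~\ref{lemma: construction og wk} goes through.

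There is, however, a logical misstep in the middle of your plan that you should repair. You propose to ``invoke Theorem~\ref{thm:unconstrained main} to produce a functional $F\in\Ical$ such that $F(\cdot,R)=F'(\cdot,R)$ for all $R\in\Rcal$.'' That theorem does not do this: it takes a functional already known to lie in $\Ical$ (hence defined on all pairs $(u,D)$ with $D\in\Ocal(\R^N)$) and returns an integrand $f\in\Fcal$ representing it. It cannot extend $F'$ from the countable family $\Rcal$ to all bounded open $D$, so invoking it at this point is circular --- to feed a functional into Theorem~\ref{thm:unconstrained main} you must first have constructed one on all of $\Ocal(\R^N)$. The paper sidesteps this by \emph{first} defining $F(u,D):=\sup_{D'\in\Dcal,\ D'\subset\subset D}F'(u,D')$ (inner approximation), \emph{then} showing, via the nested subadditivity $F''(u,D_1\cup B)\le F''(u,D_2)+F''(u,B)$ and a thin-strip estimate $c_0(|D\setminus K|+\|u\|^p_{L^p(D\setminus K)})<\eps$, that $F''(u,D)\le F(u,D)$ for every $D\in\Ocal(\R^N)$, so the $\Gamma$-limit exists on all $D$ and equals $F$; only afterwards does it verify that $F\in\Ical$. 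Theorem~\ref{thm:unconstrained main} plays no role in the proof of this theorem and is used only later (in Corollary~\ref{cor:UCmain}) to extract the integrand. A related, smaller point: your outer-regularity step ($D\subset\subset R''$) implicitly requires extending $u$ beyond $D$; the paper's argument only uses inner regularity and the thin strip, which avoids this. Replace the appeal to Theorem~\ref{thm:unconstrained main} by the explicit inner-sup definition and the thin-strip argument, and your plan becomes a correct reconstruction of the paper's proof.
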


The proof of this theorem relies on the following technical result, which will be used also in the proof of
Proposition \ref{M and Mceta}.

\begin{lemma}\label{lemma: construction og wk}
Let \((F_k)_{k\in\NN}\)  be a sequence in \(\Ical\),
let  \(D_1, D_2, B \in \Ocal (\R^N)\), with \(D_1\subset\subset
D_2\), let \(u\in L^p (D_2\cup B;\RR^d) \), and  let   \((u_k)_{k\in\NN}\subset L^p (D_2;\RR^d)\) and \((v_k)_{k\in\NN}\subset L^p (B;\RR^d)\) be two sequences such that
\begin{equation}
\label{eq:recseq2}
\begin{aligned}
&\lim_{k\to\infty} \Vert u_k -u\Vert^{\Acal}_{D_2}=0 \quad\text{and}\quad \limsup_{k\to\infty}
F_k(u_k, D_2) <+\infty,\\
&\lim_{k\to\infty} \Vert v_k -u\Vert^{\Acal}_B=0 \quad\text{and}\quad  \limsup_{k\to\infty}
F_k(v_k, B) <+\infty.
\end{aligned}
\end{equation}
Then, setting \(D:=D_1\cup B\), there exists a sequence \((w_k)_{k\in\NN}\subset L^p (D;\RR^d)\) such that 
\begin{align}
&\displaystyle w_k=u_k \text{ in } D_1 \quad\text{and}\quad w_k=v_k \text{ in } B\setminus D_2,
\vphantom{ \Vert_{D}^{\Acal}  \lim_{k\to\infty} }
\label{wk = uk and vk}
\\
&\displaystyle
w_k\rightharpoonup u \text{ weakly in }L^p (D;\RR^d) \quad\text{and}\quad   \lim_{k\to\infty} \Vert  w_k -  u\Vert_{D}^{\Acal} =0,
\label{wk A converges to u}
\\
&\displaystyle  \limsup_{k\to\infty} F_k(w_k,D)\leq \limsup_{k\to\infty} \big(F_k(u_k, D_2) +F_k(v_k, B)\big).
\label{Fkwk leq}
\end{align}
\end{lemma}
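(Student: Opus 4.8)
The plan is to construct $w_k$ by gluing $u_k$ and $v_k$ across a transition layer, using a cut-off function in the annular region $D_2\setminus D_1$, and to control the error terms produced by this gluing both for the $\Acal$-norm and for the functional values $F_k$. Fix a bounded open set $D_1'$ with $D_1\subset\subset D_1'\subset\subset D_2$, and choose finitely many open sets $D_1 =: E_0 \subset\subset E_1 \subset\subset \cdots \subset\subset E_m := D_1'$ nested between them; the integer $m$ will be chosen large at the end (a De Giorgi–type averaging argument over the $m$ layers). For each layer index $j\in\{1,\dots,m\}$ pick $\varphi_j\in C^\infty_c(E_j;[0,1])$ with $\varphi_j\equiv 1$ on $\overline{E_{j-1}}$, and set
\begin{equation*}
w_k^{(j)} := \varphi_j\, u_k + (1-\varphi_j)\, v_k \quad\text{on } D = D_1\cup B,
\end{equation*}
interpreting $u_k$ as defined on a neighbourhood of $\overline{D_1'}\subset D_2$ and $v_k$ as defined on $B$ (note $\supp\varphi_j\subset E_m=D_1'\subset\subset D_2$, so $(1-\varphi_j)v_k$ makes sense on all of $D$ and $\varphi_j u_k$ extends by zero outside $D_2$). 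By construction $w_k^{(j)}=u_k$ on $D_1$ and $w_k^{(j)}=v_k$ on $D\setminus D_1' \supset B\setminus D_2$, so \eqref{wk = uk and vk} holds for every $j$.

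First I would verify \eqref{wk A converges to u}. Since $u_k\weakly u$ and $v_k\weakly u$ weakly in $L^p$ on their respective domains (by Remark~\ref{weak Lp versus W-1p}, using the $F_k$-boundedness in \eqref{eq:recseq2}) and $\varphi_j$ is fixed, one gets $w_k^{(j)}\weakly u$ weakly in $L^p(D;\R^d)$. For the $\Vert\cdot\Vert_D^\Acal$ convergence: the $W^{-1,p}$ part is immediate from $L^p$-strong convergence of both $u_k\to u$ and $v_k\to u$ in that norm (which follows from $\Vert u_k-u\Vert^\Acal_{D_2}\to 0$, etc.); the real point is the $\Acal_D w_k^{(j)}$ term. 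Writing $\Acal_D(\varphi_j u_k) = \varphi_j\,\Acal_{D_2} u_k + [\Acal,\varphi_j]u_k$ where the commutator $[\Acal,\varphi_j]u_k = \sum_i (\partial_i\varphi_j)\,A^i u_k$ is a zeroth-order operator, and similarly for $(1-\varphi_j)v_k$, we obtain
\begin{equation*}
\Acal_D w_k^{(j)} = \varphi_j\,\Acal_{D_2}u_k + (1-\varphi_j)\,\Acal_B v_k + \sum_{i=1}^N (\partial_i\varphi_j)\,A^i(u_k - v_k).
\end{equation*}
The first two terms converge in $W^{-1,p}$ to $\varphi_j\,\Acal u + (1-\varphi_j)\,\Acal u = \Acal u$ (after adding back the commutator of the constant limit, which cancels). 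The problematic last term: $u_k-v_k \weakly 0$ weakly in $L^p(D_1';\R^d)$, and multiplication by the fixed smooth $\partial_i\varphi_j$ followed by the compact embedding $L^p(D_1')\embed W^{-1,p}(D_1')$ shows $(\partial_i\varphi_j)A^i(u_k-v_k)\to 0$ strongly in $W^{-1,p}$. Hence $\Vert\Acal_D w_k^{(j)} - \Acal u\Vert_{W^{-1,p}(D;\R^l)}\to 0$, giving \eqref{wk A converges to u} for each fixed $j$.

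\textbf{The main obstacle} is \eqref{Fkwk leq}: the gluing must not create extra energy in the transition annulus $D_1'\setminus D_1$. Here I would use the $p$-Lipschitz estimate \eqref{eq:pLip F} (in the form \eqref{eq:pLip220}) together with the subadditivity of $B\mapsto F_k(u,B)$ from \eqref{eq:measure on D}. On the "good" regions, $F_k(w_k^{(j)}, D_1)=F_k(u_k,D_1)\le F_k(u_k,D_2)$ and $F_k(w_k^{(j)}, D\setminus\overline{E_j})=F_k(v_k, D\setminus\overline{E_j})\le F_k(v_k,B)$ (using $D\setminus\overline{E_j}\subset B$ since $E_j\supset D_1$ and $E_j\subset\subset D_2$). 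On the annular layer $L_j := E_j\setminus\overline{E_{j-1}}$, comparing $w_k^{(j)}$ to $v_k$ via \eqref{eq:pLip220},
\begin{equation*}
F_k(w_k^{(j)}, L_j) \le F_k(v_k, L_j) + c_2\big(|L_j|^{\frac{p-1}p} + \Vert w_k^{(j)}\Vert_{L^p(L_j)}^{p-1} + \Vert v_k\Vert_{L^p(L_j)}^{p-1}\big)\Vert w_k^{(j)} - v_k\Vert_{L^p(L_j)},
\end{equation*}
and $w_k^{(j)}-v_k = \varphi_j(u_k-v_k)$ on $L_j$, so $\Vert w_k^{(j)}-v_k\Vert_{L^p(L_j)}\le \Vert u_k-v_k\Vert_{L^p(L_j)}$. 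The $L^p$-norms $\Vert u_k\Vert_{L^p(D_1')},\Vert v_k\Vert_{L^p(D_1')}$ are bounded uniformly in $k$ (from the lower bound in \eqref{eq:pgrowth F} and the $F_k$-bounds), so the prefactor is bounded by a constant $C$ independent of $j,k$. Summing $F_k(w_k^{(j)},\cdot)$ over the at most $m+2$ pieces $D_1$, $L_1,\dots,L_j$(only those up to index $j$ matter since $\varphi_j$ is supported in $E_j$; actually on $L_{j'}$ for $j'<j$ one compares to $u_k$ instead, with the symmetric estimate and $w_k^{(j)}-u_k=(1-\varphi_j)(v_k-u_k)=0$ there, so those layers contribute exactly $F_k(u_k,L_{j'})$), $\dots$, and $D\setminus\overline{E_j}$, and choosing the index $j=j(k)$ for which $\Vert u_k-v_k\Vert_{L^p(L_j)}$ is smallest — so that $\Vert u_k-v_k\Vert_{L^p(L_{j(k)})}^p \le \tfrac1m\Vert u_k-v_k\Vert_{L^p(D_1')}^p \le \tfrac{C'}{m}$ — we get
\begin{equation*}
F_k(w_k^{(j(k))}, D) \le F_k(u_k, D_2) + F_k(v_k, B) + C\Big(\tfrac{1}{m}\Big)^{1/p},
\end{equation*}
using that the "interior annuli" energies $\sum_{j'<j(k)}F_k(u_k,L_{j'})$ and the good-region energies are absorbed into $F_k(u_k,D_2)$ and $F_k(v_k,B)$ by the measure property \eqref{eq:measure on D}. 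Taking $\limsup_{k\to\infty}$ and then letting $m\to\infty$ (after a diagonal extraction so that a single sequence $w_k := w_k^{(j(k))}$ with $m=m(k)\to\infty$ works, noting the convergences \eqref{wk = uk and vk}–\eqref{wk A converges to u} are preserved under this extraction since they hold for each fixed $j$ uniformly in the relevant sense), we obtain \eqref{Fkwk leq}. The only subtlety to check carefully is that \eqref{wk A converges to u} survives the diagonalization: since the commutator term is controlled by $\Vert\nabla\varphi_{j(k)}\Vert_\infty \Vert u_k-v_k\Vert_{L^p(L_{j(k)})}$ in a weak norm and $\Vert u_k-v_k\Vert_{L^p(L_{j(k)})}\to 0$ faster than $\Vert\nabla\varphi_{j(k)}\Vert_\infty$ grows provided the $E_j$ are spaced so that $\Vert\nabla\varphi_j\Vert_\infty \le Cm$ — which forces a mild quantitative bookkeeping — the $W^{-1,p}$ bound still tends to $0$; alternatively one fixes $m$ first, proves the lemma with the error $C m^{-1/p}$, and invokes a standard diagonal lemma. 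This completes the proof sketch.
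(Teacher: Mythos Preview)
Your proof is correct and follows a genuinely different (but equally standard) route from the paper's. The paper uses a \emph{single} shrinking transition layer together with a measure-compactness argument: it extracts a weak\(^*\) limit \(\nu\) of the measures \(\int_E(1+|u_k|^p+|v_k|^p)\,dx\), selects \(\eta\) with \(\nu(\partial D^\eta)=0\), defines \(w_k^m:=\theta_m u_k+(1-\theta_m)v_k\) with a cut-off across the layer \(L_m=(D^{\eta+1/m}\setminus\overline{D^\eta})\cap B\), and bounds the layer energy directly via the upper growth bound (a), \(F_k(w_k^m,L_m)\le c_0\,\nu_k(L_m)\); the diagonalization then uses \(\nu(\overline{L_m})\to\nu(\partial D^\eta)=0\). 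Your argument is the De~Giorgi \(m\)-layer averaging: fix \(m\) equally nested layers, build \(w_k^{(j)}\) for each, and pick \(j(k)\) by pigeonhole so that \(\|u_k-v_k\|_{L^p(L_{j(k)}\cap D)}^p\le C/m\). Both approaches yield the same conclusion; yours avoids the measure extraction at the cost of invoking the \(p\)-Lipschitz estimate \eqref{eq:pLip F} on the layer (the paper only needs the growth bound there).

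Two minor cleanups. First, on the layer you can dispense with the \(p\)-Lipschitz comparison altogether and use the growth bound directly, \(F_k(w_k^{(j)},L_j\cap D)\le c_0\int_{L_j\cap D}(1+2^{p-1}|u_k|^p+2^{p-1}|v_k|^p)\,dx\), then average over \(j\); this removes the need to absorb the extra \(F_k(v_k,L_j')\) term. Second, your sums and norms on the transition region should be taken over \(L_j\cap D\subset B\cap D_2\), not over \(D_1'\) (where \(v_k\) need not be defined); this is the same restriction the paper makes explicit in its definition of \(L_m\). With these adjustments the diagonalization over \(m\) is straightforward, exactly as you outline: for fixed \(m\) the finitely many cut-offs \(\varphi_1,\dots,\varphi_m\) have uniformly bounded \(W^{1,\infty}\) norms, so the commutator terms \(\sum_i(\partial_i\varphi_{j(k)})A^i(u_k-v_k)\to 0\) in \(W^{-1,p}\) by the compact embedding \(L^p\hookrightarrow W^{-1,p}\), uniformly over the finite set of indices.
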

\begin{proof}
By \eqref{eq:recseq2}, we may assume without loss of generality
that  \(\sup_k F_k(u_k,
D_2)<+\infty\) and \(\sup_k F_k(v_k, B)<+\infty \).
Consequently, by  (a) in Definition~\ref{def:Ffrak},  
 \((u_k)_{k\in\NN}\) is bounded in \(L^p (D_2;\RR^d)\)
 and \((v_k)_{k\in\NN}\) is bounded in \(L^p (B;\RR^d)\). In
turn, 
this yields
a uniform bound for the total variations of the sequence \((\nu_k)_{k\in\NN}\) of
Radon measures on \(\R^N\) defined by
\begin{equation*}
\nu_k(E):= \int_{E\cap D_2 \cap B}
\big(1+ |u_k(x)|^p + |v_k(x)|^p\big) \, dx\quad \text{for each Borel set }
E \subset \R^N.
\end{equation*}
Thus, extracting a subsequence,
which we do not relabel, 
there exists a Radon measure \(\nu\)  on \(\R^N\) such that
\begin{equation}
\label{eq:convmeas}
\nu_k \weaklystar \nu \quad \text{weakly\(^*\) in the sense of measures on }\R^N.
\end{equation}

Next, we set  \(\tau:=\dist(D_1, \partial D_2)>0\) and, for \(0<t<\tau\),
we define 
\begin{equation*}
D^t:= \left\{x\in \R^N\colon \dist(x, D_1) < t \right\}\subset\subset D_2.
\end{equation*}
Since \(\nu\) is a
finite measure, we can select \(\eta\in(0,\tfrac\tau2)\) such that
and \(\nu (\partial D^{\eta})=0\) (see \cite[Proposition~1.15]{FoLe07}).
For every \(m\in\NN\) with \(0<\frac1m<\frac\tau2\), we define the sets
\begin{equation*}
L_m:= (D^{\eta+\frac1m}\setminus \overline{ D^{\eta}}) \cap B \subset\subset D_2.
\end{equation*}
We further   consider a cut-off function
\(\theta_m \in C^\infty_c (D^{\eta+\frac1m};[0,1])\) with
\(\theta_m = 1\) on \({\overline{D^{\eta}}}\), and 
 define 
\begin{equation*}
\begin{aligned}
w_k^m(x):= \theta_m(x) u_k(x) + (1- \theta_m(x))
v_k(x) \quad\text{for } x\in D=D_1\cup B.
\end{aligned}
\end{equation*}

It is clear that \eqref{wk = uk and vk} holds for any choice of \(m\).

Let \(\psi \in W^{1,q}_0(D;\RR^l)\). Then, 
 using the fact that
\(\theta_m \psi\), \( \partial_i\theta_m \psi \in W^{1,q}_0(D_2;\RR^l)\) and \((1-\theta_m) \psi\), \( \partial_i(1-\theta_m )\psi= -\partial_i\theta_m \psi \in W^{1,q}_0(B;\RR^l)\), we can find a constant, \(c>0\),
depending on \(\Acal\) and \(m\)
but not on 
  \(k\), such that
\begin{equation*}
\begin{aligned}
&-\sum_{i=1}^N\int_{D} A^{i}(w_k^m - u)\cdot\partial_i \psi\, dx =-\sum_{i=1}^N\int_{D} A^{i}\big(\theta_m (u_k - u) +(1-\theta_m) (v_k - u)  \big)\cdot\partial_i \psi\,
dx \\
&\quad= -\sum_{i=1}^N \bigg(\int_{D_2} A^{i}(u_k - u)\cdot\partial_i (\theta_m \psi)\,
dx - \int_{D_2} A^{i}(u_k - u)\cdot(\partial_i
\theta_m \psi)\,
dx \bigg)\\
&\quad\quad -\sum_{i=1}^N \bigg(\int_{B} A^{i}(v_k - u)\cdot\partial_i
((1-\theta_m )\psi)\,
dx + \int_{B} A^{i}(v_k - u)\cdot(\partial_i
\theta_m \psi)\,
dx \bigg)\\
&\quad \leq c\Big( \Vert
\Acal_{D_2}
( u_k - u)\Vert_{ W^{-1,p}(D;\RR^l)} +\Vert  u_k - u\Vert_{W^{-1,p}(D_2;\RR^d)}\\&\quad\quad+\Vert
\Acal_{B}
( v_k - u)\Vert_{ W^{-1,p}(D;\RR^l)} +\Vert  v_k - u\Vert_{W^{-1,p}(B;\RR^d)}  \Big).
\end{aligned}
\end{equation*}
 Consequently,  
we deduce from \eqref{eq:recseq2} that
\begin{equation}
\label{eq:convofw}
\begin{aligned}
\lim_{k\to\infty} \Vert  w_k^m -  u\Vert_{D}^{ \Acal} =0
\quad \text{for every } m\in\NN \text{ with } 0<\tfrac1m<\tfrac\tau2.
\end{aligned}
\end{equation}

Finally, for every such  \(m\),  condition (c) and  the second  inequality in (a) of Definition \ref{def:Ffrak} 
together with  \eqref{eq:recseq2}--\eqref{eq:convmeas} yield 
\begin{equation*}
\begin{aligned}
\limsup_{k\to\infty}\  F_k\big(w_k^m,D \big)
&\leq \limsup_{k\to\infty} \big(F_k(u_k, D_2) +F_k(v_k, B)+ F_k(w_k^m,L_m
)\big) \\
&\leq \limsup_{k\to\infty} \big(F_k(u_k, D_2) +F_k(v_k, B) \big)+  c_0\limsup_{k\to\infty} \int_{L_m}
\big(1+ |u_k(x)|^p + |v_k(x)|^p \big) \, dx\\
&\leq \limsup_{k\to\infty} \big(F_k(u_k, D_2) +F_k(v_k, B) \big) +  c_0\,\nu(\overline{L_m}).
\end{aligned}
\end{equation*}
Together with \eqref{eq:convofw}, this implies that for every \(m\in\NN\) with 
\(0<\tfrac1m<\tfrac\tau2\) there exists \(k_m\in\NN\) such that
\begin{equation}\label{Fkwkm}
\Vert  w_k^m -  u\Vert_{D}^{ \Acal} <\tfrac1m \quad\text{and}\quad
F_k(w_k^m,D) < \limsup_{k\to\infty} \big(F_k(u_k, D_2) +F_k(v_k, B)\big) +  c_0\,\nu(\overline{L_m})+\tfrac1m
\end{equation}
for every \(k\ge k_m\). It is not restrictive to assume that \(k_m<k_{m+1}\) for every \(m\).

Define \(w_k:=w_k^m\) for \(k_m\le k <k_{m+1}\). Then, \eqref{Fkwkm} yields
\begin{equation*}
\Vert  w_k -  u\Vert_{D}^{ \Acal} <\tfrac1m \quad\text{and}\quad F_k(w_k,D)
 < \limsup_{k\to\infty} \big(F_k(u_k, D_2) +F_k(v_k, B) \big) +  c_0\,\nu(\overline{L_m})+\tfrac1m
\end{equation*}
for every \(k\ge k_m\). Hence,
\begin{equation*}
\limsup_{k\to\infty} \Vert  w_k -  u\Vert_{D}^{ \Acal} \le \tfrac1m
\quad\text{and}\quad 
\limsup_{k\to\infty} F_k(w_k,D) \le 
\limsup_{k\to\infty} \big(F_k(u_k, D_2) +F_k(v_k, B) \big) +  c_0\,\nu(\overline{L_m})+\tfrac1m.
\end{equation*}
Since \(\lim_m\nu(\overline{L_m})= \nu (\partial D^{\eta})=0\), taking the limit as \(m\to\infty\) in the preceding estimates, we obtain
the second part of \eqref{wk A converges to u} and \eqref{Fkwk leq}.

Recalling (a) of Definition \ref{def:Ffrak}, inequality  \eqref{Fkwk leq}  and \eqref{eq:recseq2}  imply  that 
 \((w_k)_{k\in\NN}\) is bounded in \(L^p (D;\RR^d)\). Therefore, the first part of \eqref{wk A converges to u}
is a consequence of the second one (see Remark~\ref{weak Lp versus W-1p}).
\end{proof}

\begin{proof}[Proof of Theorem \ref{thm:Gamma-unconst}] For each \(D\in\Ocal(\RR^N)\),  we define 
\(F'(\cdot,D),F''(\cdot,D):L^p(D;\RR^d)\to[0,+\infty]\)
by
\begin{equation*}
\begin{aligned}
F'(\cdot,D):=\Gamma(\Vert\cdot\Vert^{\Acal}_D)\text{-}\liminf_{k\to\infty}
F_k(\cdot,D)\quad
\text{and } \quad F''(\cdot,D):=\Gamma(\Vert\cdot\Vert^{\Acal}_D)\text{-}\limsup_{k\to\infty} F_k(\cdot,D),
\end{aligned}
\end{equation*}
where the \(\Gamma\)-limits are taken with
respect to the  topology  induced on
\(L^p(D;\RR^d)\) by the norm \(\Vert\cdot\Vert^{\Acal}_D\). 
If \(u\in  L^p(B;\RR^d)\) for some \(B\in\Ocal(\R^N)\) containing \(D\), we simply write \(F'(u,D)\) and \(F''(u,D)\) instead of
\(F'(u|_D,D)\) and \(F''(u|_D,D)\).

We now proceed in several steps.

\medskip

\textit{Step 1 (Monotonicity of \(F'\) and \(F''\)).} 
We observe that for every \(D_1, D_2\in
\Ocal(\RR^N)\) with \(D_1\subset D_2\),  condition (c) in
Definition~\ref{def:Ffrak}  implies that \(F_k(u,D_1)\le F_k(u,D_2)\) for every 
\(u\in  L^p(D_2;\RR^d)\) and every \(k\in\NN\).
Therefore,  \(F'(u,D_1)\le F'(u,D_2)\) and  \(F''(u,D_1)\le F''(u,D_2)\) for every 
\(u\in  L^p(D_2;\RR^d)\).

\medskip

\textit{Step 2 (Upper bound for \(F''\)).} Let  \(D\in\Ocal(\RR^N)\) and  \(u\in L^p(D;\RR^d)\). Then,
\begin{equation}\label{eq:F''bdd}
\begin{aligned}
F''(u,D)\leq c_0(|D|
+ \Vert u\Vert_{L^p(D;\RR^d)}^p).
\end{aligned}
\end{equation}
Indeed,  the definition of \(F''(\cdot, D)\)
and  the upper bound in (a) of Definition \ref{def:Ffrak} 
yield
\begin{equation*}
F''(u,D) \leq \limsup_{k\to\infty} F_k(u,D) \leq c_0(|D|
+ \Vert u\Vert_{L^p(D;\RR^d)}^p),
\end{equation*}
which proves \eqref{eq:F''bdd}.

\medskip

\textit{Step 3 (Nested subadditivity  of \(F''\)).} Let \(D_1,
D_2, B \in \Ocal (\R^N)\), with \(D_1\subset\subset
D_2\), and let  \(u\in L^p (D_2\cup B;\RR^d) \). We want to prove that
\begin{equation}\label{eq:F''sub}
F''(u,D_1 \cup B) \leq F''(u, D_2) + F''(u, B).
\end{equation}
Let   \((u_k)_{k\in\NN}\subset L^p (D_2;\RR^d)\) and 
\((v_k)_{k\in\NN}\subset L^p (B;\RR^d)\) be two sequences such that
\begin{equation*}
\begin{aligned}
&\lim_{k\to\infty} \Vert u_k -u\Vert^{\Acal}_{D_2}=0 \quad\text{and}\quad \limsup_{k\to\infty}
F_k(u_k, D_2) = F''(u, D_2)<+\infty,\\
&\lim_{k\to\infty} \Vert v_k -u\Vert^{\Acal}_B=0 \quad\text{and}\quad  \limsup_{k\to\infty}
F_k(v_k, B) = F''(u, B)<+\infty,
\end{aligned}
\end{equation*}
which exist by \cite[Proposition~8.1]{DM93}.
By Lemma~\ref{lemma: construction og wk}, there exists a sequence \((w_k)_{k\in\NN}\subset L^p (D_1\cup B;\RR^d)\) such that 
\eqref{wk A converges to u} and \eqref{Fkwk leq} hold, and so
\begin{equation*}
\begin{gathered}
F''_D(u,D_1 \cup B) \leq \limsup_{k\to\infty} F_k(w_k,D_1
\cup B)
\leq \limsup_{k\to\infty} \big(F_k(u_k, D_2) +F_k(v_k, B)\big)
\le F''(u, D_2) + F''(u, B),
\end{gathered}
\end{equation*}
which proves \eqref{eq:F''sub}.
\medskip

\textit{Step 4 (Compactness property).} Let \(\Dcal\) be the countable collection of the open sets that are 
finite unions of open rectangles with rational vertices. Using the compactness of \(\Gamma\)-convergence
on separable metric spaces (see \cite[Theorem 8.5]{DM93}) and a diagonal argument, we obtain a 
subsequence  of \((F_k)_{k\in\NN}\), which we do not relabel,
for which\begin{equation*}
F'(u,D)=F''(u,D)\quad \text{for every }D\in\Dcal\text{ and every }u\in L^p (D;\RR^d).
\end{equation*}
For every \(D\in\Ocal(\R^N)\) and every \(u\in L^p (D;\RR^d)\), we define
\begin{equation}\label{eq:def-F-}
F(u,D):=\sup_{\substack{D'\in \Dcal\\D'\subset\subset D}} F'(u,D')=\sup_{\substack{D'\in  \Dcal\\D'\subset\subset D}} F''(u,D').
\end{equation}
From the monotonicity of \(F'(u,\cdot)\) and \(F''(u,\cdot)\)
in Step~1, we deduce   for every \(D\in\Ocal(\R^N)\) and every \(u\in L^p (D;\RR^d)\) that 
\begin{equation}\label{inner-regularity-of-F}
F(u,D)=\sup_{\substack{D'\in \Ocal(\R^N)\\D'\subset\subset D}} F'(u,D')=\sup_{\substack{D'\in \Ocal(\R^N)\\D'\subset\subset D}} F''(u,D')
\end{equation}
and
\begin{equation}\label{F le F' le F''}
F(u,D)\le F'(u,D)\le F''(u,D).
\end{equation}
In fact, by \eqref{eq:def-F-}, we clearly have 
\begin{equation}
\label{eq:fleqF'all}
\begin{aligned}
F(u,D) \leq\sup_{\substack{\widetilde D\in \Ocal(\R^N)\\ \widetilde D\subset\subset D}}
F'(u,\widetilde D). 
\end{aligned}
\end{equation}
Conversely, given \(\widetilde D\in \Ocal(\R^N)\) with \(\widetilde
D\subset\subset D\), we can find \(D'\in\Dcal \) such that \(\widetilde
D \subset D' \subset\subset D\). Thus, by the  monotonicity of \(F'(u,\cdot)
\) 
and by \eqref{eq:def-F-}, we conclude that
\begin{equation*}
\begin{aligned}
F'(u,\widetilde D) \leq F'(u,D') \leq F(u, D). 
\end{aligned}
\end{equation*}
Taking the supremum over all sets  \(\widetilde D\in \Ocal(\R^N)\) with \(\widetilde
D\subset\subset D\) yields the converse inequality of \eqref{eq:fleqF'all},
which proves the first identity in \eqref{inner-regularity-of-F}.
The remaining statements in  \eqref{inner-regularity-of-F} and \eqref{F le F' le F''} can be  proven similarly.

Moreover,  by \eqref{eq:F''bdd}, we have 
\begin{equation}\label{eq:Fbdd}
F(u,D)\leq c_0(|D|
+ \Vert u\Vert_{L^p(D;\RR^d)}^p)
\end{equation}
for every \(D\in\Ocal(\RR^N)\) and every  \(u\in L^p(D;\RR^d)\). 

\medskip

\textit{Step 5 (Proof of the \(\Gamma\)-convergence).}  By \eqref{F le F' le F''}, we obtain that \((F_k(\cdot,D))_{k\in\NN}\)
\(\Gamma\)-converges to \(F(\cdot,D)\) once we  prove that
\begin{equation}\label{F'' le F}
F''(u,D)\le F(u,D)
\end{equation}
for every \(D\in\Ocal(\R^N)\) and every \(u\in L^p (D;\RR^d)\). 

Fix any such \(D\) and \(u\),  and fix  \(\eps>0\).  Let \(K\subset D\) be a compact
set  such that
\begin{equation}\label{strip < eps}
c_0(|D\setminus K|
+ \Vert u\Vert_{L^p(D\setminus K;\RR^d)}^p)<\eps.
\end{equation}
Fix  \(D_1, D_2 \in \Ocal (\R^N)\), 
with \(K\subset D_1\subset\subset D_2\subset\subset D\). By \eqref{eq:F''sub} with \(B:=D\setminus K\),  \eqref{eq:F''bdd}, \eqref{inner-regularity-of-F}, and  
\eqref{strip < eps}, we obtain
\begin{equation*}
F''(u,D)\le F''(u,D_2)+F''(u,D\setminus K)\le F(u,D)+c_0(|D\setminus K|
+ \Vert u\Vert_{L^p(D\setminus K;\RR^d)}^p)\le F(u,D)+\eps.
\end{equation*}
The arbitrariness of \(\eps>0\)  yields \eqref{F'' le F}, completing the proof of \(\Gamma\)-convergence.
In particular, in view of \eqref{inner-regularity-of-F}, \eqref{F le F' le F''}, and \eqref{F'' le F}, we conclude that \(F\) is
inner regular; that is,
  we have for
every \(D\in\Ocal(\R^N)\) and every \(u\in L^p (D;\RR^d)\) that%
\begin{equation}
\label{inner
regularity of F}
\begin{aligned}
F(u,D) =\sup_{\substack{D'\in \Ocal(\R^N)\\D'\subset\subset
D}} F(u,D').
\end{aligned}
\end{equation}
Moreover, by a general property of \(\Gamma\)-limits, we have for every \(D\in\Ocal(\R^N)\) that the functional \(u\mapsto  F(u,D)\)
is lower semicontinuous for the topology induced on
\(L^p(D;\RR^d)\) by the norm \(\Vert\cdot\Vert^{\Acal}_D\). 

\medskip

\textit{Step 6 (Proof of 
 condition (a) of Definition~\ref{def:Ffrak}  for \(F\)).} Fix \(D\in\Ocal(\R^N)\) and  \(u\in L^p(D;\R^d)\). By Step~5, there
exists a sequence   \((u_k)_{k\in\NN}\subset L^p (D;\RR^d)\)  such
that
\begin{equation*}
\begin{aligned}
&\lim_{k\to\infty} \Vert u_k -u\Vert^{\Acal}_{D}=0 \quad\text{and}\quad
\limsup_{k\to\infty}
F_k(u_k, D) = F(u, D).
\end{aligned}
\end{equation*}
Then, using the lower bound in  
 condition (a) of Definition \ref{def:Ffrak}  for \(F_k\), we have for all sufficiently large \(k\in\NN\)  that
\begin{equation*}
\frac1{c_0}\Vert u_k\Vert_{L^p(D;\RR^d)}^p 
- c_0|D| \le F_k(u_k,D) \leq F(u,D) +1. 
\end{equation*}
The preceding estimate and  \eqref{eq:Fbdd} yield that \((u_k)_{k\in\NN}\) is a  bounded sequence in \( L^p(D;\R^d)\).  
Hence, \(u_k \rightharpoonup 
u\) weakly in \( L^p(D;\R^d)\), and so
\begin{equation}\label{eq:lbforF}
\begin{aligned}
F(u, D)= \limsup_{k\to\infty}
F_k(u_k, D)  \geq   \limsup_{k\to\infty} \Big(\frac1{c_0}\Vert u_k\Vert_{L^p(D;\RR^d)}^p 
- c_0|D|\Big) \geq \frac1{c_0}\Vert u\Vert_{L^p(D;\RR^d)}^p 
- c_0|D|.
\end{aligned}
\end{equation}
Finally, \eqref{eq:Fbdd} and \eqref{eq:lbforF}  show that   
 condition (a) of Definition \ref{def:Ffrak}  holds for \(F\).

\medskip

\textit{Step  7  (Proof of  condition (b) of Definition \ref{def:Ffrak}  for \(F\)).} 
Fix \(D\in \Ocal(\RR^N)\) and  \(u\in L^p(D;\RR^d)\), and
let \(\alpha\colon \Ocal(D) \to [0,+\infty)\) be
the (increasing) set function 
defined by setting  \(\alpha(B):= F( u,B)\) for every \(B\in\Ocal (D)\).
Invoking \cite[Theorem~14.23]{DM93}, 
 condition (b) of Definition \ref{def:Ffrak}  is satisfied  for \(F\) 
provided that \(\alpha\) is subadditive, superadditive, and inner regular
in \(\Ocal(D).\) The inner regularity holds by  \eqref{inner regularity of F}, while the simple proof of the superadditivity can be obtained as in \cite[Proposition~16.12]{DM93}. We are then left to show that \(\alpha\)
is superadditive in \(\Ocal(D)\), which amounts to proving that   
\begin{equation}
\label{eq:alphasuper}
\begin{aligned}
\alpha( B_1 \cup B_2)\leq \alpha (B_1) +\alpha (B_2)  \quad \text{ for all
\( B_1,B_2 \in \Ocal(D)\)}.
\end{aligned}
\end{equation}

Let \( B_1,B_2 \in \Ocal(D)\),  and fix \(\delta>0\). By \eqref{inner regularity of F},
we can find \(B' \subset \subset B_1 \cup B_2\)  such that 
\begin{equation*}
\begin{aligned}
\alpha(B_1\cup B_2) - \delta < F'' (u, B').
\end{aligned}
\end{equation*}
Let   \(B'_1, B''_1, B_2'\in \Ocal(D)\) be such that \(B_1'\subset\subset
B''_1 \subset\subset B_1\),  \(B_2' \subset\subset B_2\), and \(B'
\subset\subset B_1' \cup B'_2\) (see \cite[Lemma~14.20]{DM93}  for instance).
Then, using  \eqref{eq:F''sub}, \eqref{inner regularity of F},  and the monotonicity of \(F''(u,\cdot)\)  proved in Step~1, we obtain
\begin{equation*}
\begin{aligned}
\alpha( B_1 \cup B_2) - \delta < F'' (u, B') \leq F'' (u, B_1'
\cup B'_2) \leq F''(u, B_1'') +F''(u, B_2')\leq \alpha(B_1)
+\alpha(B_2), 
\end{aligned}
\end{equation*}
from which \eqref{eq:alphasuper} follows by letting \(\delta\to0\).

\medskip

\textit{Step  8  (Proof of 
 condition (c) of Definition \ref{def:Ffrak}  for \(F\)).}  Assume that \(F_k\in \Ical_{Lip}\) for every \(k\in\NN\)
and fix  \(D\in\Ocal(\R^N)\). By \eqref{eq:pLipF asymmetric} for \(F_k\), we have
\begin{equation}\label{F_k equiLip}
 F_k(u,D) \leq  F_k(v,D) + c_1\big (|D|^{\frac{p-1}p} +
 F_k(v,D)^{\frac{p-1}p} +\Vert u-v\Vert_{L^p(D;\RR^d)}^{p-1}\big)\Vert
u-v\Vert_{L^p(D;\RR^d)}
\end{equation}
 for every \(u, v\in L^p(D;\RR^d)\). We claim that this inequality passes to the  \(\Gamma\)-limit. Indeed, given 
 \(u, v\in L^p(D;\RR^d)\), we can find a sequence \((v_k)_{k\in\NN}\) in  \(L^p(D;\RR^d)\) converging to \(v\)
 in the topology induced on
\(L^p(D;\RR^d)\) by the norm \(\Vert\cdot\Vert^{\Acal}_D\) and such that
\begin{equation}\label{Fkvk to Fv}
 \lim_{k\to\infty} F_k(v_k,D)= F(v,D).
 \end{equation}
For every \(k\in\NN\), let \(u_k:=v_k+u-v\).
By \eqref{F_k equiLip}, we have
\begin{equation}\label{eq:est_Fk}
 F_k(u_k,D) \leq  F_k(v_k,D) + c_1\big (|D|^{\frac{p-1}p}  + F_k(v_k,D)^{\frac{p-1}p} +\Vert u-v\Vert_{L^p(D;\RR^d)}^{p-1}\big)\Vert
u-v\Vert_{L^p(D;\RR^d)}.
\end{equation}
On the other hand, since \((u_k)_{k\in\NN}\) converges to \(u\)
 in the topology induced on
\(L^p(D;\RR^d)\) by the norm \(\Vert\cdot\Vert^{\Acal}_D\), we have
by \(\Gamma\)-convergence that
\begin{equation*}
 F(u,D)\le \liminf_{k\to\infty} F_k(u_k,D).
 \end{equation*}
This inequality, together with   \eqref{Fkvk to Fv} and \eqref{eq:est_Fk}, leads to
\eqref{eq:pLipF asymmetric} for \(F\).  As observed in Remark~\ref{rem: exchangindg pLipF}, we then conclude that
 condition (c) of Definition \ref{def:Ffrak}  holds  for \(F\).
\end{proof}

We now prove that every functional in \(\Ical_{\lip}\) can be represented by an integral whose integrand belongs to 
\(\Fcal_{\lip}\).
\begin{theorem}\label{thm:unconstrained main} Let \(F\in\Ical_{\lip}\). For every  \(x\in\R^N\) and \(\xi\in\R^d\), we set
\begin{equation}\label{eq:def f(x,xi)}
f(x,\xi):=\limsup_{\rho\to 0^+}\frac{F(\xi,Q_\rho(x))}{\rho^N}. 
\end{equation}
Then, \(f\in\Fcal_{\lip}\) and
\begin{equation}\label{eq:intrep}
F(u,D)= \int_D f(x, u(x))\, dx
\end{equation}
for every  \(D\in\Ocal(\R^N)\) and every \(u\in L^p(D,\RR^d)\). If, in addition,  \(F\in\Ical_{\lip}\cap\Ical_{\lsc}\), then 
the function \(\xi\mapsto f(x,\xi)\) is \(\Acal\)-quasiconvex
for every \(x\in\R^N\); that is,  \(f\in\Fcal_{\lip}\cap\Fcal_{\qc}\).
\end{theorem}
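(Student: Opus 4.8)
The plan is to prove the integral representation \eqref{eq:intrep} by a by-now-standard blow-up / localization argument, and then to derive \(\Acal\)-quasiconvexity of \(\xi\mapsto f(x,\xi)\) in the lower semicontinuous case from the lower semicontinuity of \(u\mapsto F(u,D)\) along \(Q\)-periodic \(\Acal\)-free oscillations. First, I would record the basic structural facts about \(f\) that follow immediately from the axioms \eqref{eq:pgrowth F}--\eqref{eq:measure on D}: the \(p\)-growth bound \eqref{eq:pgrowth} for \(f\) follows by applying \eqref{eq:pgrowth F} to constant functions \(u\equiv\xi\) on cubes \(Q_\rho(x)\) and dividing by \(\rho^N\); likewise, the \(p\)-Lipschitz bound \eqref{eq:pLip} for \(f\) follows from \eqref{eq:pLip F} applied to the constants \(\xi_1,\xi_2\) on \(Q_\rho(x)\), using \(\abs{Q_\rho(x)}=\rho^N\) and again letting \(\rho\to 0^+\) (here one has to be slightly careful that a \(\limsup\) of a difference is controlled by the difference of quantities, but the monotone structure and the elementary inequality \(\limsup(a_\rho)-\liminf(b_\rho)\le\limsup(a_\rho-b_\rho)\) handle it). Measurability of \(x\mapsto f(x,\xi)\) and continuity of \(\xi\mapsto f(x,\xi)\) — i.e.\ that \(f\) is Carathéodory — is obtained from the Lipschitz estimate just derived (which gives continuity in \(\xi\), locally uniformly) together with measurability of \(x\mapsto F(\xi,Q_\rho(x))\), which follows since \(B\mapsto F(\xi,B)\) extends to a measure by \eqref{eq:measure on D} and \(x\mapsto F(\xi,Q_\rho(x))\) is then measurable in \(x\); passing to the \(\limsup\) over a countable sequence \(\rho\to 0^+\) (legitimate because the Lipschitz/growth bounds make \(f\) continuous in \(\xi\) and one can restrict to rational \(\rho\)) preserves measurability. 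Thus \(f\in\Fcal\).

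The core of the proof is \eqref{eq:intrep}. I would first establish it for \(u\) piecewise constant (a simple function) on a bounded open set \(D\): if \(u=\sum_j \xi_j\ONE_{D_j}\) with the \(D_j\) disjoint open sets whose union covers \(D\) up to a null set (one may always arrange the exceptional set to be the boundary of a polyhedral partition, which has measure zero), then by \eqref{eq:measure on D} \(F(u,D)=\sum_j F(\xi_j,D_j)\); applying the Lebesgue differentiation theorem to the measure \(B\mapsto F(\xi_j,B)\) and using the definition \eqref{eq:def f(x,xi)} of \(f\), one gets that the Radon–Nikodym derivative of \(B\mapsto F(\xi_j,B)\) with respect to Lebesgue measure equals \(f(\cdot,\xi_j)\) a.e. (the \(p\)-growth bound \eqref{eq:pgrowth F} guarantees absolute continuity with respect to Lebesgue measure, so the \(\limsup\) in \eqref{eq:def f(x,xi)} is in fact a limit a.e.), whence \(F(\xi_j,D_j)=\int_{D_j}f(x,\xi_j)\,dx\) and summing gives \eqref{eq:intrep} for simple \(u\). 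Then I would extend to general \(u\in L^p(D;\R^d)\) by density of simple functions in \(L^p\): given \(u\), pick simple \(u_n\to u\) in \(L^p(D;\R^d)\); the right-hand side \(\int_D f(x,u_n(x))\,dx\to\int_D f(x,u(x))\,dx\) by the growth/Lipschitz bounds on \(f\) and dominated convergence (after passing to an a.e.-convergent subsequence and dominating via \eqref{eq:pgrowth}), while the left-hand side \(F(u_n,D)\to F(u,D)\) by the \(p\)-Lipschitz continuity estimate \eqref{eq:pLip220} of Remark~\ref{rem: simpler pLip F} applied to \(u_n\) and \(u\), with the \(L^p\) norms of \(u_n\) bounded. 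This yields \eqref{eq:intrep} in full.

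Finally, assume \(F\in\Ical_{\lsc}\); I want \(\xi\mapsto f(x,\xi)\) \(\Acal\)-quasiconvex for every \(x\). Fix \(\xi\in\R^d\), a cube \(Q\), and a \(Q\)-periodic \(w\in C^\infty(\R^N;\R^d)\) with \(\Acal w=0\) in \(\R^N\) and \(\int_Q w=0\); by Remark~\ref{rmk:Aqcx} the choice of \(Q\) is irrelevant, so take \(Q=Q_1(0)\). I would work on a fixed cube, say \(Q_1(x)\), and test lower semicontinuity against the oscillating sequence \(u_k(y):=\xi+w(k(y-x))\) for \(y\in Q_1(x)\): by Riemann–Lebesgue \(u_k\weakly \xi\) weakly in \(L^p(Q_1(x);\R^d)\) (the mean of \(w\) is zero), and \(\Acal_{Q_1(x)}u_k\to \Acal_{Q_1(x)}\xi=0\) strongly in \(W^{-1,p}\) — in fact each \(u_k\) is exactly \(\Acal\)-free since \(w\) is, so \(\Acal u_k=0\) identically and the convergence is trivial. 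Lower semicontinuity (in the form of Remark~\ref{weak Lp versus W-1p}) then gives \(F(\xi,Q_1(x))\le\liminf_k F(u_k,Q_1(x))\); using \eqref{eq:intrep} to write \(F(u_k,Q_1(x))=\int_{Q_1(x)}f(y,\xi+w(k(y-x)))\,dy\) and applying the differentiation-of-periodic-integrals fact (the average over fine periodicity cells converges: \(\int_{Q_1(x)}g(y)h(k(y-x))\,dy\to \bigl(\int_{Q_1(0)}h\bigr)\int_{Q_1(x)}g(y)\,dy/\abs{Q_1(0)}\) for \(g\in L^1\), \(h\) bounded \(Q_1(0)\)-periodic — here one approximates \(y\mapsto f(y,\cdot)\) by a function of the form \(\sum g_j(y)\,(\text{cont.\ in }\xi)\), e.g.\ using continuity of \(f\) in \(\xi\) uniformly on compact sets), one obtains \(\liminf_k F(u_k,Q_1(x))\le\int_{Q_1(x)}\bigl(\int_{Q_1(0)}f(y,\xi+w(z))\,dz\bigr)dy\). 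Dividing the resulting inequality by a small cube \(Q_\rho(x)\) in place of \(Q_1(x)\) and letting \(\rho\to 0^+\), the left side converges to \(f(x,\xi)\) by \eqref{eq:def f(x,xi)} and the right side converges (by Lebesgue differentiation applied to \(y\mapsto\int_{Q_1(0)}f(y,\xi+w(z))\,dz\)) to \(\int_{Q_1(0)}f(x,\xi+w(z))\,dz=\frac1{\abs{Q}}\int_Q f(x,\xi+w(z))\,dz\), which is exactly the \(\Acal\)-quasiconvexity inequality of Definition~\ref{def:Aqcx}. Hence \(f\in\Fcal_{\qc}\).

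I expect the main obstacle to be the passage from the ``integral representation on a fixed cube'' to the pointwise statement at a.e.\ — or rather, here, \emph{every} — \(x\): one must be careful that the \(\limsup\) in \eqref{eq:def f(x,xi)} genuinely reconstructs the density of the measure \(B\mapsto F(u,B)\) at every point for simple \(u\), and that in the \(\Acal\)-quasiconvexity step the double-averaging limit (fine periodic oscillation in \(w\), combined with shrinking the outer cube \(Q_\rho(x)\)) can be taken in either order, which is where continuity of \(f\) in \(\xi\) (uniform on compacts, from \eqref{eq:pLip}) and the Lebesgue differentiation theorem need to be combined carefully; handling the ``for every \(x\)'' rather than ``a.e.\ \(x\)'' is exactly what the \(\limsup\) definition \eqref{eq:def f(x,xi)} is designed to make possible, but it requires checking that the oscillation estimate localizes cleanly.
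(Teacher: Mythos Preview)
Your argument for \(f\in\Fcal\) and the integral representation \eqref{eq:intrep} is essentially the paper's: derive \eqref{eq:pgrowth} and \eqref{eq:pLip} for \(f\) from \eqref{eq:pgrowth F} and \eqref{eq:pLip F} applied to constants; use the measure property \eqref{eq:measure on D}, absolute continuity, and Lebesgue differentiation to get \(F(\xi,B)=\int_B f(x,\xi)\,dx\); extend to piecewise constant \(u\) by additivity of the measure; then to general \(u\in L^p\) by density and the Lipschitz estimates on both sides.

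For \(\Acal\)-quasiconvexity your route differs. The paper first invokes \cite[Theorem~3.6]{FoMu99} to obtain \(\Acal\)-quasiconvexity of \(\xi\mapsto f(x,\xi)\) for a.e.\ \(x\), and then upgrades to \emph{every} \(x\) by showing that each average \(g_\rho(\xi):=\rho^{-N}\int_{Q_\rho(x)}f(y,\xi)\,dy\) is \(\Acal\)-quasiconvex (integrate the a.e.\ inequality in \(y\) and use Fubini) and that a \(\limsup\) of locally equi-bounded \(\Acal\)-quasiconvex functions remains \(\Acal\)-quasiconvex (reverse Fatou). Your direct test of lower semicontinuity against \(u_k(y)=\xi+w(k(y-x))\), combined with the periodic-averaging limit for \(\int_{Q_\rho(x)}f(y,\xi+w(k(y-x)))\,dy\), is more self-contained since it bypasses the citation; but as written, your last step --- Lebesgue differentiation of \(y\mapsto\int_{Q_1(0)}f(y,\xi+w(z))\,dz\) --- only yields the inequality for a.e.\ \(x\). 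The cure, which you rightly anticipate as the delicate point, is to swap the integrals by Fubini first: write
\(\rho^{-N}\int_{Q_\rho(x)}\int_{Q_1(0)}f(y,\xi+w(z))\,dz\,dy=\int_{Q_1(0)}\rho^{-N}F(\xi+w(z),Q_\rho(x))\,dz\)
and then apply reverse Fatou together with the definition \eqref{eq:def f(x,xi)} to bound the \(\limsup_{\rho\to0^+}\) by \(\int_{Q_1(0)}f(x,\xi+w(z))\,dz\) at \emph{every} \(x\). With this adjustment your argument and the paper's converge on the same mechanism --- \(f(x,\cdot)\) is a \(\limsup\) of \(\Acal\)-quasiconvex averages --- the difference being only whether the \(\Acal\)-quasiconvexity of those averages is obtained by citation or by a direct oscillation test.
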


\begin{proof}
By  condition (a) of Definition \ref{def:Ffrak}  and \eqref{eq:def f(x,xi)}, the function \(f\) satisfies \eqref{eq:pgrowth}.  
Since \(F\)  satisfies  \eqref{eq:pLipF asymmetric}  (see Remark \ref{rem: exchangindg pLipF}), we deduce that \eqref{eq:pLip asymmetric} holds for 
\(f\), which is
equivalent to \eqref{eq:pLip} (see  Remark~\ref{rem: exchangind pLip}).

Let us fix \(D\in\Ocal(\R^N)\). By 
 condition (b) of Definition \ref{def:Ffrak} 
the set function \(B\mapsto F(\xi,B)\) defined on \(\Ocal(D)\)
can be extended to a measure defined on all Borel subsets of \(D\). By   condition (a) of Definition~\ref{def:Ffrak},  this measure is
 absolutely continuous with
respect to the Lebesgue measure. By \eqref{eq:def f(x,xi)} and by the Lebesgue Differentiation Theorem, 
the function \(x\mapsto f(x,\xi)\) is measurable on \(\R^N\) for every \(\xi\in\R^d\), and
\begin{equation}\label{eq:intuk}
F(\xi,B)=\int_B f(x,\xi)\, dx \quad \text{ for every Borel set }
B\subset D.
\end{equation}
The measurability of \(x\mapsto f(x,\xi)\), together with \eqref{eq:pgrowth} and \eqref{eq:pLip},  which encode the continuity of \(\xi\mapsto
f(x,\xi)\),  implies that \(f\in\Fcal\).

Let \(u\colon D\to \R^d\) be a piecewise constant function, that is, there exists a finite family \((B_i)_{i\in I}\) 
of pairwise disjoint sets in \(\Ocal(D)\), covering almost all of \(D\),  and a finite family \((\xi_i)_{i\in I}\) in \(\R^d\) such that for every \(i\in I\) we have \(u(x)=\xi_i\) for every \(x\in B_i\). By  conditions (a) and (b) of Definition \ref{def:Ffrak} 
 the set function \(B\mapsto F(u,B)\) is a measure that  is absolutely continuous with respect to the Lebesgue measure. By applying \eqref{eq:intuk} to \(\xi_i\) and \(B_i\), we obtain
\begin{equation*}
F(u,D)=\sum_{i\in I}F(\xi_i,B_i)=\sum_{i\in I}\int_{B_i} f(x,\xi_i)\, dx = \int_D f(x,u(x))\, dx.
\end{equation*}

Consider now an arbitrary function \(u\in L^p(D,\RR^d)\). There exists a sequence  \((u_k)_{k\in\NN}\) of piecewise 
constant functions converging to \(u\) in \(L^p(D,\RR^d)\). By the previous step, we have
\begin{equation*}
F(u_k,D) = \int_D f(x,u_k(x))\, dx
\end{equation*}
for every \(k\in\NN\). By   condition (a) of Definition \ref{def:Ffrak}, using  \eqref{eq:pgrowth}, \eqref{eq:pLip22}, and \eqref{eq:pLip220} we can pass to the limit in both terms as \(k\to\infty\) and we obtain \eqref{eq:intrep}.

If \(F\in \Ical_{\lsc}\), then for a.e.~\(x\in\R^N\) the function \(\xi\mapsto f(x,\xi)\) is 
\(\Acal\)-quasiconvex by  \cite[Theorem 3.6]{FoMu99}.
Fix \(x\in\R^N\) and \(\rho>0\), and let \(g\colon\R^{N}\to\R\)
be the function defined by
\begin{equation}\label{new g(xi)}
g(\xi):=\frac{1}{\rho^{N}} \int_{Q_\rho(x)} f(y,\xi)\,dy.
\end{equation}
Let \(Q\subset \RR^N\) be a cube, and let \(w \in C^\infty(\mathbb
R^N;\mathbb R^d)\) be a   \(\) \(Q\)-periodic function, with \(\Acal w=0\)
in \(\R^{N}\) and
\(\int_{Q}
w(y)\,dy = 0\). By \(\Acal\)-quasiconvexity of \(f\), we have  for a.e.\ \(y\in\R^{N}\)
that
\[
f(y,\xi)\le  \frac{1}{|Q|}\int_{Q} f(y,\xi+w(z))\,dz.
\]
Integrating with respect to \(y\) and using Fubini's theorem,
we get
\[
\int_{Q_\rho(x)}f(y,\xi)\,dy \le  \frac{1}{|Q|}\int_{Q} \Big(\int_{Q_\rho(x)}
f(y,\xi+w(z))\,dy\Big)\,dz.
\]
In view of \eqref{new g(xi)}, this gives
\[
g(\xi)\le  \frac{1}{|Q|}\int_{Q}g(\xi+w(z))\,dz.
\]
Hence, \(g\) is \(\Acal\)-quasiconvex.
By  \eqref{eq:intrep}  this    is equivalent to saying that
  the function
\begin{equation*}
\xi\mapsto \frac{F(\xi,Q_\rho(x))}{\rho^N}
\end{equation*}
is  \(\Acal\)-quasiconvex.
 Moreover, by Fatou's lemma for bounded sequences (see \cite[Lemma~1.83~(ii)]{FoLe07}),
it can be similarly checked that the  \(\limsup\)   of  locally equi-bounded
functions preserves \(\Acal\)-quasiconvexity.  
We then 
   deduce from \eqref{eq:def f(x,xi)}   that \(\xi\mapsto
f(x,\xi)\) is 
\(\Acal\)-quasiconvex for every \(x\in\R^N\).
\end{proof} 

We are now in a position to prove a compactness result for the collection of integrands \(\Fcal\).
\begin{corollary}\label{cor:UCmain}
Let  \((f_k)_{k\in\NN}\) be a sequence in  \(\Fcal_{\lip}\)  and let \((F_k)_{k\in\NN}\) be the corresponding sequence of functionals in \(\Ical\) defined by
\eqref{eq:Ffree}.  Then, there exist a subsequence,
which we do not relabel,
and a function  \(f\in \Fcal_{\lip}\cap  \Fcal_{\qc}\) such that for every \(D\in\Ocal(\R^N)\), the sequence
\((F_k(\cdot,D))_{k\in\NN}\)
\(\Gamma\)-converges to \(F(\cdot,D)\) defined by
\eqref{eq:Ffree} with respect to
the topology induced by \(\Vert\cdot\Vert_D^\Acal\) on \(L^p(D;\RR^d)\).
\end{corollary}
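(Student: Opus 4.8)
\textbf{Proof strategy for Corollary \ref{cor:UCmain}.} The plan is to combine the two main results of this section: the $\Gamma$-compactness theorem (Theorem \ref{thm:Gamma-unconst}) and the integral representation result (Theorem \ref{thm:unconstrained main}). First I would apply Theorem \ref{thm:Gamma-unconst} to the sequence $(F_k)_{k\in\NN}$, which is a sequence in $\Ical$ by Remark \ref{rem:Fcal gives Ffrac}. This yields a subsequence (not relabeled) and a functional $F\in\Ical_{\lsc}$ such that for every $D\in\Ocal(\R^N)$, the sequence $(F_k(\cdot,D))_{k\in\NN}$ $\Gamma$-converges to $F(\cdot,D)$ with respect to the topology induced by $\Vert\cdot\Vert_D^\Acal$ on $L^p(D;\RR^d)$.

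Next I would invoke Theorem \ref{thm:unconstrained main} applied to this limit functional $F$. Since $F\in\Ical_{\lsc}$, that theorem provides a function $f\in\Fcal_{\qc}$, defined pointwise by $f(x,\xi):=\limsup_{\rho\to 0^+}F(\xi,Q_\rho(x))/\rho^N$, such that $F(u,D)=\int_D f(x,u(x))\,dx$ for every $D\in\Ocal(\R^N)$ and every $u\in L^p(D,\RR^d)$; that is, $F$ is exactly the functional associated with $f$ via \eqref{eq:Ffree}. Combining the two steps: the subsequence of $(F_k(\cdot,D))_{k\in\NN}$ $\Gamma$-converges to $F(\cdot,D)$, and $F(\cdot,D)$ is the functional defined by \eqref{eq:Ffree} with integrand $f\in\Fcal_{\qc}$. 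This is precisely the assertion of the corollary.

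There is essentially no obstacle here beyond assembling the pieces, since the corollary is a direct specialization of the abstract results to the concrete case of integral functionals: the only thing to observe is that the class $\Ical$ is large enough to contain all the $F_k$ (Remark \ref{rem:Fcal gives Ffrac}) and that the abstract $\Gamma$-limit, a priori merely an element of $\Ical_{\lsc}$, is forced by Theorem \ref{thm:unconstrained main} to be of integral form with a $\Acal$-quasiconvex integrand. If anything requires a word of care, it is the bookkeeping of the single subsequence: Theorem \ref{thm:Gamma-unconst} already produces one subsequence that works simultaneously for all $D\in\Ocal(\R^N)$, so no further diagonalization is needed, and the identity $F(u,D)=\int_D f(x,u(x))\,dx$ then holds for all $D$ and all $u$ with this same subsequence.
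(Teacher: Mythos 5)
Your proposal is correct and coincides with the paper's proof, which simply invokes Theorem \ref{thm:Gamma-unconst} to extract a subsequence and a $\Gamma$-limit $F\in\Ical_{\lsc}$, and then Theorem \ref{thm:unconstrained main} to represent $F$ as an integral functional with integrand $f\in\Fcal_{\qc}$. The bookkeeping remark about a single subsequence working for all $D$ is a helpful clarification but does not change the argument.
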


\begin{proof}
The result follows from  Remark \ref{rem:Fcal gives Ffrac} and  Theorems \ref{thm:Gamma-unconst}
and \ref{thm:unconstrained main}.
\end{proof}

 The following theorem, which was communicated to us by Jean-Fran\c cois Babadjian, shows that the hypothesis of \(p\)-Lipschitz continuity can be omitted under a condition on the wave cone \(\Lambda\) defined by
\eqref{eq:wave}.
\begin{theorem}\label{th:Babadjian}Assume that the vector space $\mathrm{span}(\Lambda)$ generated by the wave cone $\Lambda$ coincides with $\R^d$. Then, Theorem \ref{thm:unconstrained main} is still satisfied if we replace \(\Ical_{\lip}\) by
\(\Ical_{\lsc}\) and \(\Fcal_{\lip}\) by \(\Fcal_{\lip}\cap\Fcal_{\qc}\). Moreover, the inclusion \(\Ical_{\lsc}\subset \Ical_{\lip}\) holds.
Finally, Corollary  \ref{cor:UCmain} remains valid if we assume only that  \((f_k)_{k\in\NN}\) is a sequence in \(\Fcal\).
\end{theorem}

\begin{proof} Assume that \(F\in\Ical_{\lsc}\) and let \(f\) be defined by \eqref{eq:def f(x,xi)}. 
Arguing as in Theorem \ref{thm:unconstrained main}, we show 
that the function \(x\mapsto f(x,\xi)\) is measurable on \(\R^N\) for every \(\xi\in\R^d\),
that \(f\) satisfies the \(p\)-growth condition \eqref{eq:pgrowth}, and that
\eqref{eq:intrep} holds for every \(D\in\Ocal(\R^N)\) and every piecewise constant 
function \(u \colon D\to \R^d\). 

We claim that
for every \(x\in\R^N\) the function \(\xi\mapsto f(x,\xi)\) is \(\Lambda\)-convex on \(\R^d\), i.e., 
\begin{equation}\label{Lambda-conv}
f(x,\theta \xi_1+ (1-\theta)\xi_2) \le \theta f(x,\xi_1) + (1-\theta)f(x,\xi_2)
\end{equation}
for every \(\xi_1\), \(\xi_2\in \R^d\) with \(\xi_1-\xi_2\in\Lambda\) and every \(\theta\in[0,1]\).
To this aim, we fix \(\xi_1\), \(\xi_2\), and \(\theta\) as required and we consider the periodic extension
\(\chi\colon\R \to [0,1]\) of the characteristic function of \([0,\theta]\). By the definition of wave cone (see \eqref{eq:wave}), there exists \(w\in\R^N\setminus\{0\}\)
such that 
\begin{equation}\label{eq:wave2}
\xi_1-\xi_2\in \ker\left( \sum_{i=1}^N A^i w_i\right).
 \end{equation}
 
We consider the piecewise constant function \(u_\eps\in L^p_{\loc}(\R^{N};\RR^d)\) defined by
\begin{equation*}
u_\eps(x) =  \chi(x\cdot w/\eps)\xi_1 + (1-\chi(x\cdot w/\eps))\xi_2.
\end{equation*}
By the Riemann--Lebesgue Lemma, \(u_\eps\rightharpoonup \theta \xi_1+ (1-\theta)\xi_2\) weakly in \(L^p(D; \R^d)\) for every
\(D\in\Ocal(\R^N)\). Moreover,
\begin{equation}\label{Acal ueps}
\Acal u_\eps =
\frac{1}{\eps}\left( \sum_{i=1}^N A^i w_i\right)(\xi_1-\xi_2)
\sum_{j\in\Z}
(\mathcal H^{N-1}\LLL \{x\cdot w= j\}-\mathcal H^{N-1}\LLL \{x\cdot w= \theta+ j\} )
\end{equation}
in the sense of distributions on \(\R^N\), where \(\mathcal H^{N-1}\) is the \((N-1)\)-dimensional Hausdorff measure and \((H^{N-1}\LLL E)(B)=H^{N-1}(E\cap B)\) for every pair of
Borel sets \(E\), \(B\subset \R^N\).
By \eqref{eq:wave2}, the right-hand side of \eqref{Acal ueps} equals \(0\), hence \(\Acal u_\eps=0\) in the sense of distributions on \(\R^N\).
 
Since \(F\in\Ical_{\lsc}\), for every
\(D\in\Ocal(\R^N)\) the functional \(F(\cdot,D)\) is lower semicontinuous with respect to the norm \(\Vert\cdot\Vert_D^\Acal\) 
on \(L^p(D;\RR^d)\).
Thus, by Remark \ref{weak Lp versus W-1p},
\begin{equation*}
F(\theta \xi_1+ (1-\theta)\xi_2),D) \le \liminf_{\eps\to 0+}
F(u_\eps,D).
 \end{equation*}
 Using the integral representation of \(F\) on piecewise constant functions, from the previous inequality we obtain
 \begin{align*}
F(\theta \xi_1+ (1-\theta)\xi_2),D)  &\le \liminf_{\eps\to 0+}
\int_Df(x,u_\eps(x))\,dx\\
&=\lim_{\eps\to 0+} \int_D\big(\chi(x\cdot w/\eps)f(x,\xi_1) + (1-\chi(x\cdot w/\eps))f(x,\xi_2)\big) \,dx
\\
&=\theta \int_Df(x,\xi_1)\,dx  + (1-\theta) \int_Df(x,\xi_2) \,dx = \theta F(\xi_1,D) + (1-\theta) F(\xi_2,D),
 \end{align*}
 where the second equality follows again from the Riemann--Lebesgue Lemma.
 
 Given \(x\in\R^N\) and \(\rho>0\), we take \(D=Q_\rho(x)\) in the previous inequality. Dividing by \(\rho^N\) and using
 \eqref{eq:def f(x,xi)}, we obtain \eqref{Lambda-conv}, which shows that
the function \(\xi\mapsto f(x,\xi)\) is \(\Lambda\)-convex on \(\R^d\)  for every \(x\in\R^N\).

By \eqref{eq:pgrowth}, we can apply \cite[Lemma 2.3]{KirKris}) (see also \cite[Lemma 4.6]{GueRai}) to
obtain that  there exists \(\check c_1>0\), depending only on \(c_0\), \(p\), and \(d\), such that
\eqref{eq:pLip22} holds for  every \(x\in \RR^N\) and every \(\xi_1, \xi_2\in\RR^d\). Since
\(f\) is measurable with respect to \(x\) and satisfies the growth conditions \eqref{eq:pgrowth}, we conclude that
\(f\in \Fcal\).

Thanks to \eqref{eq:pLip22}, the integral representation on piecewise constant functions
leads to the following inequality
\begin{equation*}
F(u,D) \le
\int_Df(x,u(x)) \,dx \quad\hbox{for all }D\in\Ocal(\R^N)\hbox{ and }u\in L^p(D; \R^d)
\end{equation*}
by the lower semicontinuity of \(F(\cdot,D)\) in the strong topology of \(L^p(D; \R^d)\) (see Remark \ref{weak Lp versus W-1p}) and the continuity of \(u\mapsto \int_D f(x,u(x))\,dx\) in the strong topology of \(L^p(D; \R^d)\) due to \eqref{eq:pLip22}. 

To prove the equality, we fix \(u\in  L^p_{\loc}(\R^N; \R^d)\) and 
we use the translation argument introduced in the proof of \cite[Lemma 4.1]{ButDM} (see also \cite[Theorem 21.1]{DM93}). 
We define
\begin{equation*}
G(v,D) = F(u+ v,D) \quad\hbox{for all }D\in\Ocal(\R^N)\hbox{ and }v\in L^p(D; \R^d).
\end{equation*}
We observe that \(G\) satisfies the growth conditions
\begin{itemize}
\item[(a\('\))]\(\displaystyle \frac{1}{C}\Vert v\Vert_{L^p(D;\RR^d)}^p - C\big(|D| + \Vert u\Vert_{L^p(D;\RR^d)}^p\big)\le G(v,D) \leq C\big(|D|+\Vert u\Vert_{L^p(D;\RR^d)}^{p}\big) + C\Vert
v\Vert_{L^p(D;\RR^d)}^{p},\)
\end{itemize}
for suitable a constant \(C>1\). This is a slight variant of condition (a) of Definition  \ref{def:Ffrak}.

Arguing as before, we obtain a Carath\'eodory function \(g\colon \R^N\times\R^d\to\R\) with \(p\)-growth and \(p\)-Lipschitz
with respect to the second variable such that for every
\(D\in\Ocal(\R^N)\) we have
\begin{equation*}
G(v,D) =
\int_D g(x,v(x)) \,dx  \quad\hbox{if }v\hbox{ is piecewise constant on
}D,
\end{equation*}
from which it follows that
\begin{equation*}
G(v,D) \le 
\int_D g(x,v(x)) \,dx \quad\hbox{for all }v\in L^p(D; \R^d).
\end{equation*}

Let \(D\in\Ocal(\R^N)\) and let \((u_k)_{k\in\NN}\) be a sequence of piecewise constant functions such that \(u_k\to u\) strongly in \(L^p(D; \R^d)\).
Since \(v\mapsto \int_D f(x,v(x))\,dx\) and  \(v\mapsto \int_D g(x,v(x))\,dx\) are continuous in the strong topology of \(L^p(D; \R^d)\), the equalities and inequalities satisfied by \(F(v,D)\), \(G(v,D)\), \(\int_D f(x,v(x))\,dx\) and \(\int_D g(x,v(x))\,dx\) give
\begin{equation*}
\begin{aligned}
F(u,D) &\le
\int_Df(x,u(x)) \,dx
= \lim_{k\to \infty} \int_Df(x,u_k(x)) \,dx
=\lim_{k\to \infty} F(u_k,D)
= \lim_{k\to \infty}
G(u_k-u,D)
\\
&\le \lim_{k\to \infty}
\int_D g(x,u_k(x)-u(x)) \,dx
=\int_Dg(x,0) \,dx = G(0,D) = F(u,D),
\end{aligned}
\end{equation*}
and we obtain
\begin{equation}\label{eq: intrepLp}
F(u,D)= \int_D f(x, u(x))\, dx.
\end{equation}
Since every \(u\in L^p(D; \R^d)\) can be extended to a function of \(L^p_{\loc}(D; \R^d)\), 
\eqref{eq:intrep} holds for every \(D\in\Ocal(\R^N)\) and \(u\in L^p(D; \R^d)\).

Since \(f\in\Fcal\) and \(F\in\Ical_{\lsc}\), using \cite[Theorem 3.6]{FoMu99} we deduce from \eqref{eq: intrepLp}
that the function \(\xi\mapsto f(x,\xi)\) is 
\(\Acal\)-quasiconvex for a.e.~\(x\in\R^N\). Arguing as in the last part of the proof
of Theorem \ref{thm:unconstrained main}, we obtain that \(\xi\mapsto f(x,\xi)\) is 
\(\Acal\)-quasiconvex for every \(x\in\R^N\), hence \(f\in\Fcal_{\qc}\). By Remark~\ref{rem: qc implies Lip} ,we have
 \(\Fcal_{\qc}\subset \Fcal_{\lip}\), hence \(f\in\Fcal_{\lip}\cap\Fcal_{\qc}\).
This concludes the proof of the
modified version of Theorem \ref{thm:unconstrained main}.

To prove the inclusion \(\Ical_{\lsc}\subset \Ical_{\lip}\), we observe that, by the
modified version of Theorem \ref{thm:unconstrained main} for every \(F\in \Ical_{\lsc}\)
there exists \(f\in \Fcal_{\lip}\cap \Fcal_{\qc}\) such that \eqref{eq:intrep} holds for every  \(D\in\Ocal(\R^N)\) and every \(u\in L^p(D,\RR^d)\). By Remark \ref{rem:Fcal gives Ffrac}, this implies that \(F\in \Ical_{\lip}\).

The modified version of Corollary  \ref{cor:UCmain} follows easily from this modified version of Theorem \ref{thm:unconstrained main}. \BBB
\end{proof}

\BBB
\section{\(\Gamma\)-convergence in the \(\Acal\)-free setting}\label{sect:Gamma}

In this section, we study \(\Gamma\)-convergence in the \(\Acal\)-free setting, i.e., with the constraint \(\Acal u=0\).
We begin with some preliminary lemmas. The following result has been established in \cite[Lemma 2.15]{FoMu99}.

\begin{lemma}\label{lem:equiint} 
Let \(D \in\Ocal(\R^N)\), let \(u\in L^p(D;\RR^d) \), and let 
 \((u_k)_{k\in\NN}\subset L^p(D;\RR^d) \) be a sequence
such that 
\begin{equation*}
u_k \weakly  u  \text{ weakly in \(L^p(D;\RR^d) \)} \enspace
\text{ and } \enspace \Acal_D u_k \to 0 \text{ in \(W^{-1,p}(D;\RR^{l})\).}
\end{equation*}
Then, there exists a \(p\)-equi-integrable sequence \(( v_k)_{k\in\NN}\subset L^p(D;\RR^d)  \) satisfying
\begin{equation}
\label{eq:newseq1}
\begin{gathered}
v_k \weakly  u  \text{ weakly in \(L^p(D;\RR^d) \)}, \quad \Acal_D
v_k =0, \quad \int_D v_k\, dx = \int_D u\,dx,
\\
\lim_{k\to\infty} \Vert u_k - v_k\Vert_{L^r(D;\RR^d)} =
0 \enspace \text{ for all \(1\leq r<p\)}.
\end{gathered}
\end{equation}
If \(D\) is a cube \(Q\), \(u\in L^p_{\loc}(\R^N;\R^d)\) is \(Q\)-periodic, and \(\Acal u=0\) in \(\R^N\), then, in addition to the previous properties, 
we can obtain that \(v_k\in L^p_{\loc}(\R^N;\R^d)\) is \(Q\)-periodic and satisfies \(\Acal v_k=0\) in \(\R^N\).
\end{lemma}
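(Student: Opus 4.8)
The plan is to reduce to the model case of periodic fields on a torus, where the Fourier-multiplier calculus built from the constant-rank condition \eqref{eq:rank} is available, and there to combine a projection onto $\Acal$-free fields with a truncation that restores $p$-equi-integrability. First I would reduce to $u=0$: since $\Acal_D\colon L^p(D;\R^d)\to W^{-1,p}(D;\R^l)$ is linear and bounded, it is weakly continuous, so $u_k\weakly u$ together with $\Acal_Du_k\to0$ forces $\Acal_Du=0$. Replacing $u_k$ by $u_k-u$, and observing that adding back the fixed function $u\in L^p(D;\R^d)$ at the end preserves $p$-equi-integrability, the mean, and the $L^r$-convergence, it suffices to treat the case $u=0$: given $u_k\weakly0$ in $L^p(D;\R^d)$ with $\Acal_Du_k\to0$ in $W^{-1,p}(D;\R^l)$, produce a $p$-equi-integrable $v_k\weakly0$ with $\Acal_Dv_k=0$, $\int_Dv_k\,dx=0$, and $u_k-v_k\to0$ in $L^r(D;\R^d)$ for all $r<p$.

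\textbf{Localization, periodization, and the $\Acal$-free projection.} Fix $D'\subset\subset D$, choose a cube $Q$ with $\overline D\subset\subset Q$ and $\varphi\in C_c^\infty(D;[0,1])$ with $\varphi\equiv1$ on $D'$, and let $w_k$ be the $Q$-periodic extension of $\varphi u_k$ (the latter regarded as an element of $L^p(\R^N;\R^d)$ supported in $D$). From the identity $\Acal(\varphi u_k)=\varphi\,\Acal_Du_k+\sum_{i=1}^N A^i(\partial_i\varphi)u_k$ one gets $\Acal w_k\to0$ in $W^{-1,p}(Q;\R^l)$: the first term is small in $W^{-1,p}$ by hypothesis, while the second is bounded in $L^p$ and weakly null, hence converges to $0$ in $W^{-1,p}(D)$ by compactness of the embedding $L^p(D;\R^d)\cembed W^{-1,p}(D;\R^d)$ (dual to the Rellich embedding $W^{1,q}_0(D)\cembed L^q(D)$); also $w_k\weakly0$ in $L^p(Q;\R^d)$. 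Now work on the torus $Q$: by \eqref{eq:rank}, the map $\lambda\mapsto\mathbb P(\lambda)$, the orthogonal projection of $\R^d$ onto $\ker\big(\sum_i\lambda_i A^i\big)$, is $C^\infty$ and $0$-homogeneous on $\R^N\setminus\{0\}$, so by Mikhlin's multiplier theorem the associated Fourier-multiplier operator $\mathbb P$ (symbol $\mathbb P(\lambda)$ off the zero mode, identity on the zero mode) is bounded on $L^r(Q;\R^d)$ for every $r\in(1,\infty)$, satisfies $\Acal(\mathbb P w)=0$ on $\R^N$, preserves the mean, and its complement $\mathrm{Id}-\mathbb P$ factors through $\Acal$ with a gain of one derivative; in particular $\|w-\mathbb P w\|_{L^p(Q)}\le C\,\|\Acal w\|_{W^{-1,p}(Q)}$.

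\textbf{Restoring $p$-equi-integrability and finishing.} Choose $t_k\to\infty$ so that $\big\{|w_k\,\ONE_{\{|w_k|\le t_k\}}|^p\big\}_k$ is equi-integrable (de la Vall\'ee-Poussin type argument) and set $v_k:=\mathbb P\big(w_k\,\ONE_{\{|w_k|\le t_k\}}\big)$. Then $v_k$ is $\Acal$-free, bounded and weakly null in $L^p(Q;\R^d)$; moreover $\{|v_k|^p\}$ is again equi-integrable --- this is the delicate point, see below --- and $v_k-w_k=-\,\mathbb P\big(w_k\,\ONE_{\{|w_k|>t_k\}}\big)+(\mathbb P w_k-w_k)\to0$ in $L^r(Q;\R^d)$ for every $r\in(1,p)$, the first term because $\mathbb P$ is $L^r$-bounded and $w_k\,\ONE_{\{|w_k|>t_k\}}$ concentrates on a set of vanishing measure, the second by the potential estimate above since $\Acal w_k\to0$. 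As $w_k=u_k$ on $D'$ and $D'$ is bounded, $v_k-u_k\to0$ in $L^r(D';\R^d)$ for all $r\in[1,p)$; subtracting the vanishing constant $\frac1{|D'|}\int_{D'}v_k\,dx$ (an $\Acal$-free correction preserving $p$-equi-integrability and the weak limit) we also arrange $\int_{D'}v_k\,dx=0$. This settles the claim with $D'$ in place of $D$; the passage to a general bounded open $D$ is by the standard exhaustion/diagonalization of \cite{FoMu99}. For the periodic addendum no cutoff is needed: if $D=Q$ and $u$ (and, as in the applications, each $u_k$) is $Q$-periodic with $\Acal u=0$ on $\R^N$, the construction is carried out directly on the torus, where subtracting $u$, applying $\mathbb P$, truncating, and adding back constants and $u$ all preserve $Q$-periodicity, and $\Acal$-freeness on the torus is precisely $\Acal v_k=0$ on $\R^N$.

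\textbf{Main obstacle.} Everything above is either soft functional analysis (weak/strong continuity, Rellich) or the by-now-standard constant-rank multiplier calculus, \emph{except} the claim that $\mathbb P$ sends a weakly convergent sequence with equi-integrable $p$-th powers to another such sequence: the mere $L^p$-boundedness of this singular operator does not give this. I expect this to be the heart of the argument; it can be obtained, e.g., by realizing equi-integrability as a uniform bound in a suitable reflexive Orlicz space (de la Vall\'ee-Poussin) on which the Mikhlin multiplier $\mathbb P$ is still bounded, or by invoking the $\Acal$-free decomposition lemma of Fonseca--M\"uller--Pedregal directly.
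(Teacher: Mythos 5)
The paper does not prove this lemma: immediately before the statement it says ``The following result has been established in \cite[Lemma~2.15]{FoMu99}'', and no in-text argument follows, so there is no proof in the paper to compare your attempt against. Your sketch is in effect a reconstruction of the Fonseca--M\"uller argument and follows the right lines: reduce to $u=0$ via weak continuity of $\Acal_D$, cut off and periodize to a cube containing $\overline D$, project onto $\Acal$-free fields with the zero-homogeneous Mikhlin multiplier $\mathbb P$ built from the constant-rank symbol, truncate to restore $p$-equi-integrability, and absorb $w_k-\mathbb P w_k$ through the $W^{-1,p}$ potential estimate.

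On the obstacle you flagged, it is real, but its resolution is more elementary than an Orlicz-space argument. Once $z_k:=w_k\ONE_{\{|w_k|\le t_k\}}$ has equi-integrable $p$-th powers, split $z_k$ at a \emph{fixed} level $M$, independent of $k$: the bounded piece lies in $L^{2p}(Q)$, so $\mathbb P$ (Mikhlin at the exponent $2p$) maps it into a bounded set of $L^{2p}(Q)$, whose $p$-th powers are equi-integrable on the bounded domain $Q$ by H\"older; the remaining piece is uniformly small in $L^p$ by equi-integrability of $\{z_k\}$, and the $L^p$-bound of $\mathbb P$ takes care of it. Note also that your alternative, ``invoking the $\Acal$-free decomposition lemma of Fonseca--M\"uller--Pedregal directly'', would be circular, since that statement is essentially \cite[Lemma~2.15]{FoMu99} itself. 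Two smaller points remain: your construction yields $u_k-v_k\to 0$ in $L^r$ only on $D'\subset\subset D$, so the exhaustion and diagonalization to all of $D$ (and the adjustment of the mean over $D$, not over $D'$) must be carried out explicitly; and for the periodic addendum you tacitly assume the $u_k$ themselves are $Q$-periodic and $\Acal$-free, which the statement does not grant---only $u$ has these properties---so a periodization of the $u_k$, rather than ``no cutoff needed'', is still required there.
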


The following result will be used to deduce the \(\Gamma\)-convergence in the \(\Acal\)-free setting from the 
\(\Gamma\)-convergence woth respect to the topology induced by \(\Vert\cdot\Vert_D^\Acal\).
\begin{lemma}\label{lem:pequiint} Let \(D\), \(u\), \((u_k)_{k\in\NN}\), and  \((v_k)_{k\in\NN}\) be as in Lemma \ref{lem:equiint}, and let
\((F_k)_{k\in\NN}\) be a sequence in
\(\Ical_{\lip}\).   
Then,
\begin{equation}
\label{eq:newnewseq2}
\begin{aligned}
&\limsup_{k\to\infty} \big( F_k(v_k,D) - F_k(u_k,D)\big)\le 0.
\end{aligned}
\end{equation}
In particular,
\begin{equation}
\label{eq:newseq2}
\begin{aligned}
&\liminf_{k\to\infty} F_k(v_k,D)  \leq \liminf_{k\to\infty} F_k(u_k,D)
\enspace\hbox{ and }\enspace
 \limsup_{k\to\infty} F_k(v_k,D)  \leq \limsup_{k\to\infty} F_k(u_k,D).
\end{aligned}
\end{equation}
\end{lemma}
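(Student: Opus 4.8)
The plan is to prove \eqref{eq:newnewseq2} and then note that \eqref{eq:newseq2} follows immediately: if $\limsup_k (F_k(v_k,D) - F_k(u_k,D)) \le 0$, then both the $\liminf$ and $\limsup$ comparisons hold by the elementary inequalities $\liminf a_k \le \liminf(a_k - b_k) + \limsup b_k \le \limsup b_k$ (applied with $a_k := F_k(v_k,D)$, $b_k := F_k(u_k,D)$, using that $a_k - b_k \to $ something $\le 0$ along any subsequence), and similarly for the $\limsup$. So the real content is \eqref{eq:newnewseq2}.

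For \eqref{eq:newnewseq2}, the idea is to exploit the $p$-Lipschitz estimate \eqref{eq:pLipF asymmetric} for each $F_k$ together with the fact, from Lemma~\ref{lem:equiint}, that $v_k - u_k \to 0$ strongly in $L^r(D;\R^d)$ for every $1 \le r < p$, and that $(v_k)_{k\in\NN}$ is $p$-equi-integrable while $(u_k)_{k\in\NN}$ is bounded in $L^p$. First I would apply \eqref{eq:pLipF asymmetric} with $u$ replaced by $v_k$ and $v$ replaced by $u_k$, giving
\begin{equation*}
F_k(v_k,D) - F_k(u_k,D) \le c_1\big(|D|^{\frac{p-1}{p}} + F_k(u_k,D)^{\frac{p-1}{p}} + \|v_k - u_k\|_{L^p(D;\R^d)}^{p-1}\big)\|v_k - u_k\|_{L^p(D;\R^d)}.
\end{equation*}
The factor in parentheses is bounded: $|D|$ is fixed; $F_k(u_k,D)$ is bounded by \eqref{eq:pgrowth F} together with boundedness of $(u_k)$ in $L^p$ (which one may assume, since otherwise there is nothing to prove, or one passes to a subsequence realizing the $\limsup$); and $\|v_k - u_k\|_{L^p}$ is bounded since both $(v_k)$ and $(u_k)$ are bounded in $L^p$. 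So the whole right-hand side is controlled by $C\|v_k - u_k\|_{L^p(D;\R^d)}$ — but this last quantity need not tend to zero, only the $L^r$-norms for $r < p$ do. This is the main obstacle.

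To circumvent it, I would not use \eqref{eq:pLipF asymmetric} directly on all of $D$ but instead localize where $|v_k|$ and $|u_k|$ are large. Split $D = E_k \cup (D \setminus E_k)$ where $E_k := \{x \in D : |v_k(x)| + |u_k(x)| > M\}$ for a large threshold $M$. On $D \setminus E_k$, the difference $v_k - u_k$ is bounded, so the $L^p$-norm of $v_k - u_k$ restricted there is comparable to an $L^r$-norm and hence tends to zero; using the additivity of $F_k(\cdot, \cdot)$ as a measure (property (c)) and the $p$-Lipschitz bound \eqref{eq:pLipF asymmetric} on $D \setminus E_k$, that part's contribution vanishes. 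On $E_k$, I would not compare $F_k(v_k, E_k)$ with $F_k(u_k, E_k)$ but instead bound $F_k(v_k, E_k)$ directly via the upper growth bound in \eqref{eq:pgrowth F}: $F_k(v_k, E_k) \le c_0(|E_k| + \int_{E_k} |v_k|^p)$. By $p$-equi-integrability of $(v_k)$ and the fact that $|E_k| \to 0$ as $M \to \infty$ uniformly in $k$ (Chebyshev, using $L^p$-boundedness), this is small uniformly in $k$ once $M$ is large. One also discards $F_k(u_k, E_k) \ge 0$. Thus $\limsup_k (F_k(v_k,D) - F_k(u_k,D)) \le \limsup_k \big(F_k(v_k, D\setminus E_k) - F_k(u_k, D\setminus E_k)\big) + \limsup_k F_k(v_k, E_k) \le 0 + \omega(M)$ where $\omega(M) \to 0$. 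Letting $M \to \infty$ gives \eqref{eq:newnewseq2}.

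A cleaner alternative, which I would actually prefer if the bookkeeping allows, is to avoid the explicit splitting by invoking a standard truncation: replace $v_k$ by $\tau_M(v_k)$ (truncation at level $M$) — but since we need $v_k$ itself, the direct splitting above is more transparent. In writing up, the key steps in order are: (1) reduce to the $\limsup$ statement and to the case of $L^p$-bounded sequences; (2) record $p$-equi-integrability of $(v_k)$ and $|E_k| \to 0$ uniformly; (3) apply \eqref{eq:pLipF asymmetric} on $D \setminus E_k$ where $\|v_k - u_k\|_{L^p(D\setminus E_k)} \le (2M)^{(p-r)/p}\|v_k - u_k\|_{L^r(D)}^{r/p} \to 0$; (4) bound $F_k(v_k, E_k)$ by \eqref{eq:pgrowth F} and control it by equi-integrability; (5) use measure-additivity (c) to glue, discard $F_k(u_k, E_k) \ge 0$, and let $M \to \infty$; (6) deduce \eqref{eq:newseq2}. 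The main obstacle remains Step (3)–(4): handling the loss of $L^p$-convergence, which the equi-integrability from Lemma~\ref{lem:equiint} is precisely designed to fix.
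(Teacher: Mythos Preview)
Your argument is correct and takes a somewhat different route from the paper's. The paper begins by invoking Theorem~\ref{thm:unconstrained main} to write $F_k(w,D)=\int_D f_k(x,w(x))\,dx$ with $f_k\in\Fcal$, sets $w_k:=f_k(\cdot,u_k)$ and $z_k:=f_k(\cdot,v_k)$, shows $w_k-z_k\to 0$ in $L^\alpha(D)$ for some $\alpha<1$ (hence in measure) via the pointwise Lipschitz bound \eqref{eq:pLip22} and H\"older, and then estimates $\int_D(z_k-w_k)^+\,dx$ by splitting on $\{(z_k-w_k)^+>\delta\}$ versus $\{(z_k-w_k)^+\le\delta\}$, using the equi-integrability of $(z_k)$ inherited from the $p$-equi-integrability of $(v_k)$. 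You instead split the \emph{domain} at level sets of $|u_k|+|v_k|$: on the bounded part the $L^r$-convergence upgrades to $L^p$ and the functional Lipschitz bound applies, while on $E_k$ the $p$-equi-integrability of $(v_k)$ controls $F_k(v_k,E_k)$ directly via the growth bound, and $F_k(u_k,E_k)\ge 0$ is discarded. Both proofs use the same three ingredients (Lipschitz estimate, $p$-equi-integrability of $(v_k)$, $L^r$-convergence of $v_k-u_k$) but organize them differently; yours is arguably more elementary since it avoids the convergence-in-measure detour.

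One technical point: you apply \eqref{eq:pLipF asymmetric} and the upper bound in \eqref{eq:pgrowth F} on the Borel sets $E_k$ and $D\setminus E_k$, but these estimates are stated only for open sets in Definition~\ref{def:Ffrak}. The cleanest fix is to invoke Theorem~\ref{thm:unconstrained main} at the outset (as the paper does), so that $F_k$ has an integral form and the pointwise bounds \eqref{eq:pgrowth}--\eqref{eq:pLip22} are available on arbitrary measurable sets; alternatively one can approximate $E_k$ by open sets using the absolute continuity of the measures $B\mapsto F_k(\cdot,B)$, but this is fussier. Also, your reduction ``one may assume $(u_k)$ is $L^p$-bounded'' is unnecessary: $u_k\rightharpoonup u$ weakly in $L^p$ already gives this, and likewise for $(v_k)$.
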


\begin{proof}
This result was established
in \cite{BrFoLe00}
when
either \(F_k\) are independent
of \(k\) or \(F_k\) are the functionals associated to functions \(f_k\) with \(f_k(x,\xi)=f(kx,\xi)\) for some
\(f\in \Fcal\) periodic in the first variable.
To prove that \eqref{eq:newnewseq2}  also holds in our setting, we apply Theorem \ref{thm:unconstrained main}
to obtain a sequence \((f_k)_{k\in\NN}\) in
\(\Fcal_{\lip}\)  such that
\begin{equation*}
F_k(v,D)=\int_D f_k(x,v(x))\,dx \quad\text{for every }v\in L^p(D;\R^d).
\end{equation*}
We define
\begin{equation}\label{wk and zk}
w_k(x) :=f_k(x,u_k(x)) \quad \text{and} \quad
z_k(x) :=f_k(x, v_k(x)),
\end{equation}
and show that
\begin{equation}
\label{eq:newseq3}
\begin{aligned}
w_k - z_k \to 0 \text{ in measure}.
\end{aligned}
\end{equation}
Indeed, introducing
\begin{equation*}
\begin{aligned}
\alpha:=\frac{q}{q-1}\in (0,1),\quad  s:=\frac1\alpha\in
(1,+\infty), \quad \text{and} \quad t\in (1,+\infty) \quad \text{such that }\frac1s+\frac1t=1,
\end{aligned}
\end{equation*}
we have \(s\alpha=1\) and \(\alpha t=q=\frac{p}{p-1}\); hence,
using \eqref{eq:pLip22}
and H\"older's inequality,
it follows, for some constant \(c\) independent
of \(k\), that
\begin{equation*}
\begin{aligned}
\int_D |w_k - z_k|^\alpha \, dx &\leq c_2^\alpha 
\int_D (1+ |u_k|^{p-1} + | v_k|^{p-1})^\alpha |u_k
- v_k|^\alpha \, dx \\
&\leq c_2^\alpha \Big( \int_D (1+ |u_k|^{p-1} + | v_k|^{p-1})^{\alpha
t}  \, dx \Big)^{\frac1{t}}
\Big( \int_D |u_k - v_k|^{\alpha
s}  \, dx \Big)^{\frac1{s}}\\
&\leq c \Big( \int_D (1+ |u_k|^{p} + |
v_k|^{p})^{}  \, dx \Big)^{\frac\alpha{q}}
\Big( \int_D |u_k - v_k|  \, dx \Big)^{\alpha} \to 0
\end{aligned}
\end{equation*}
by \eqref{eq:newseq1} and the  boundedness of the sequences \((u_k)_{k\in\NN}\)
and \(( v_k)_{k\in\NN}\) in \(L^p(D;\RR^d)\). Thus, \eqref{eq:newseq3}
holds. 

To conclude, we fix \(\delta>0\) and, setting  \(w^+:= \max \{0,w\}\), 
we observe that
\begin{equation*}
\begin{aligned}
0&\leq \int_D (z_k - w_k)^+\, dx = \int_{\{x\in D\colon (z_k - w_k)^+(x)>
\delta\}}( z_k - w_k) \, dx + \int_{\{x\in D\colon(z_k - w_k)^+\leq
\delta\}}(z_k -  w_k)^+\, dx\\
&\leq \int_{\{x\in D\colon| z_k (x)-w_k(x)|>
\delta\}} z_k  \, dx +\delta \Lcal^N(D) , 
\end{aligned}
\end{equation*}
where we used the inequalities  \(-w_k\leq 0\) (by the nonnegativity
of \(f_k\)) and  \((z_k -w_k)^+\leq |z_k -w_k|\). 
By  \eqref{eq:pgrowth} the $p$-equi-integrability of \(( v_k)_{k\in\NN}\) implies the
equi-integrability of \(( z_k)_{k\in\NN}\). Therefore, 
the
preceding estimate and the convergence in measure in \eqref{eq:newseq3}
 yield
\begin{equation*}
\begin{aligned}
\lim_{k\to\infty}  \int_D (z_k - w_k)^+\, dx=0.
\end{aligned}
\end{equation*}
Since \(z_k - w_k\le (z_k - w_k)^+\), we obtain
\begin{equation*}
\begin{aligned}
\limsup_{k\to\infty}
\int_D (z_k - w_k)\, dx \le 0\,,
\end{aligned}
\end{equation*}
which is equivalent to \eqref{eq:newnewseq2} by \eqref{wk and zk}.
The implication \eqref{eq:newnewseq2}\(\Rightarrow\)\eqref{eq:newseq2} is trivial.
\end{proof}

In the proof of Proposition \ref{M and Mceta}, we need the following technical result, which can be obtained from the previous lemmas.

\begin{corollary}\label{cor:pequiint} 
For every \(\eps>0\)  and  \(C>0\)  there exists \(\eta>0\) with the following property: for every open 
cube \(Q\subset\R^{N}\) with side length less than or equal to $1$ \BBB and every \(u\in L^p(Q;\R^d)\), with  \(\|u\|_{L^{p}(Q;\R^d)}^{p}<C|Q|\),
 \(\supp u\subset\subset Q\), and
\(\|\widetilde\Acal_Q u\|_{\widetilde W^{-1,p}(Q;\R^l)}^{p}<\eta|Q|\),
there exists \(v\in L^{p}_{\rm per}(Q;\R^d)\), with 
\(\|v-u\|_{W^{-1,p}(Q;\R^d)}^{p}<\eps|Q|\), \(\Acal v=0\) in \(\R^N\), and \(\int_Q v\,dx=\int_Q u\, dx\),
such that
\begin{equation}\label{F(v)<F(u)+e}
F(v,Q)<F(u,Q)+\eps|Q|
\end{equation}
for every \(F\in \Ical_{\lip}\).
\end{corollary}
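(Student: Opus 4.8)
The plan is to argue by contradiction, exploiting compactness and the two preceding lemmas. Suppose the statement fails for some \(\eps>0\) and \(C>0\). Then for every \(\eta=\tfrac1n\) there is an open cube \(Q_n\subset\R^N\) with side length \(\ell_n\le 1\), and \(u_n\in L^p(Q_n;\R^d)\) with \(\supp u_n\subset\subset Q_n\), \(\|u_n\|_{L^p(Q_n;\R^d)}^p<C|Q_n|\), and \(\|\widetilde\Acal_{Q_n}u_n\|^p_{\widetilde W^{-1,p}(Q_n;\R^l)}<\tfrac1n|Q_n|\), such that for every admissible \(v\) (periodic, \(\Acal\)-free, same mean, with \(\|v-u_n\|^p_{W^{-1,p}(Q_n;\R^d)}<\eps|Q_n|\)) there is some \(F\in\Ical\) with \(F(v,Q_n)\ge F(u_n,Q_n)+\eps|Q_n|\). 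After translating and rescaling to the reference cube \(Q:=Q_1(0)\) — using the change-of-variables invariance built into the norms \(\Vert\cdot\Vert^{\Acal}\) and into \(\Ical\) via the \(|Q|\)-weighted bounds — we may assume all \(Q_n\) equal a fixed cube \(Q_{\ell}(0)\) with \(\ell=\lim_n\ell_n\in(0,1]\) (passing to a subsequence; the degenerate case \(\ell_n\to0\) can be rescaled back to side length \(1\), so in fact we may take \(Q_n=Q\) fixed). The rescaled fields, still called \(u_n\), are bounded in \(L^p(Q;\R^d)\) by \(C\), so up to a subsequence \(u_n\weakly u\) weakly in \(L^p(Q;\R^d)\); moreover \(\widetilde\Acal_Q u_n\to 0\) strongly in \(\widetilde W^{-1,p}(Q;\R^l)\), which forces \(\widetilde\Acal_Q u=0\), and the condition \(\supp u_n\subset\subset Q\) is what will let us pass from \(\widetilde\Acal\) to \(\Acal\).

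The key point is that \(u\) is itself \(\Acal\)-free in a strong sense: since each \(u_n\) has compact support in \(Q\), extending by zero gives \(u_n\in L^p(\R^N;\R^d)\), and the weak-\(^*\) bound on \(\widetilde\Acal_Q u_n\) together with compact support upgrades to control of \(\Acal u_n\) on \(\R^N\) in \(W^{-1,p}_{\loc}\); in the limit we get that \(u\) (extended by zero, or rather its \(Q\)-periodic extension) satisfies \(\Acal u=0\) in \(\R^N\). Then I would apply Lemma \ref{lem:equiint} (the periodic version) to the sequence \(u_n\weakly u\): it produces a \(p\)-equi-integrable sequence \((v_n)_{n}\) with \(v_n\in L^p_{\per}(Q;\R^d)\), \(\Acal v_n=0\) in \(\R^N\), \(\int_Q v_n=\int_Q u=\int_Q u_n\) (this last equality can be arranged since \(\int_Q u_n\to\int_Q u\) and one may add a constant correction, which is itself \(\Acal\)-free with the right mean and negligible in \(W^{-1,p}\) — or absorb the small discrepancy into \(\eps\)), \(v_n\weakly u\) weakly, and \(\|u_n-v_n\|_{L^r(Q;\R^d)}\to0\) for all \(r<p\). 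From \(\|u_n-v_n\|_{L^r}\to0\) with \(r\) between \(1\) and \(p\), interpolation or the compact embedding \(L^1(Q)\embed W^{-1,p}(Q)\) gives \(\|u_n-v_n\|_{W^{-1,p}(Q;\R^d)}\to0\), so eventually \(\|v_n-u_n\|^p_{W^{-1,p}(Q;\R^d)}<\eps|Q|\): the \(v_n\) are admissible competitors for large \(n\). Finally, Lemma \ref{lem:pequiint} applied to the constant sequence \(F_k\equiv F\) (for each fixed \(F\in\Ical\)) — or rather, to make it uniform, one re-runs its proof once — gives \(\limsup_n(F(v_n,Q)-F(u_n,Q))\le 0\) for every \(F\in\Ical\); but Lemma \ref{lem:pequiint} as stated handles a single sequence \(F_k\), and here the offending \(F\) changes with \(n\), so this is exactly the obstacle to be addressed (see below).

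The main obstacle, and the reason this is stated as a corollary rather than a trivial consequence, is the \emph{uniformity over \(F\in\Ical\)}: the failed-statement gives, for each \(n\), a bad functional \(F_n\) depending on \(n\). The resolution is that the conclusion \(w_k-z_k\to0\) in measure and the one-sided integral estimate in the proof of Lemma \ref{lem:pequiint} depend on \(F_n\) only through the representing integrands \(f_n\), and those are controlled \emph{uniformly} by \eqref{eq:pgrowth}, \eqref{eq:pLip22} with constants \(c_0,c_2\) independent of \(F_n\in\Ical\). Concretely: write \(F_n(v,Q)=\int_Q f_n(x,v(x))\,dx\) via Theorem \ref{thm:unconstrained main}; set \(w_n:=f_n(\cdot,u_n)\), \(z_n:=f_n(\cdot,v_n)\); the Hölder computation in Lemma \ref{lem:pequiint} bounds \(\|w_n-z_n\|_{L^\alpha(Q)}\) by \(c\,\big(\int_Q(1+|u_n|^p+|v_n|^p)\big)^{\alpha/q}\|u_n-v_n\|_{L^1(Q)}^{\alpha}\), whose right-hand side \(\to0\) because the \(L^p\) norms are uniformly bounded and \(\|u_n-v_n\|_{L^1}\to0\), with \emph{no dependence on \(n\) in the constant}; then the \(p\)-equi-integrability of \((v_n)\) (hence of \((z_n)\) via \eqref{eq:pgrowth}, again with uniform constant \(c_0\)) and convergence in measure give \(\limsup_n\int_Q(z_n-w_n)^+\,dx=0\), so \(\limsup_n(F_n(v_n,Q)-F_n(u_n,Q))\le 0\). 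For large \(n\) this yields \(F_n(v_n,Q)<F_n(u_n,Q)+\eps|Q|\), contradicting the choice of \(F_n\). Undoing the rescaling and translation completes the proof. I would therefore present the argument by essentially re-deriving the relevant estimate of Lemma \ref{lem:pequiint} while carefully tracking that every constant depends only on \(c_0,c_1,c_2,p\), and citing Lemmas \ref{lem:equiint}, \ref{lem:pequiint} and Theorem \ref{thm:unconstrained main} for the structural inputs.
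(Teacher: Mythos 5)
Your strategy — contradiction, rescaling to the unit cube, weak compactness, showing the periodic extension of the weak limit is $\Acal$-free, then Lemmas \ref{lem:equiint} and \ref{lem:pequiint} — is the paper's route, and you even spot a small point the paper glosses over (the mean of $v_n$ produced by Lemma \ref{lem:equiint} is $\int_Q u$ where $u$ is the weak limit, not $\int_Q u_n$; a constant correction fixes it, since constants are $\Acal$-free and negligible in $W^{-1,p}$ and for $F$ via \eqref{eq:pLip220}). Two remarks, one a genuine gap.

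The step ``the $Q$-periodic extension of $u$ satisfies $\Acal u = 0$ in $\R^N$'' is not justified in your sketch. You argue via the zero extension and then hedge with ``extended by zero, or rather its $Q$-periodic extension'' — but these are different functions, and knowing $\Acal\tilde u = 0$ for the zero extension $\tilde u$ does not by itself give the conclusion for the periodic extension. What is actually needed (and what the paper proves) is $\widetilde\Acal_Q u = 0$, tested against all of $W^{1,q}(Q;\R^l)$ rather than $W^{1,q}_0(Q;\R^l)$ — this is exactly why the corollary's hypothesis is phrased in terms of $\widetilde\Acal_Q$ — combined with a decomposition argument: for $\psi\in C^\infty_c(\R^N;\R^l)$, split $\supp\psi$ into translates of $Q$ and use periodicity so that each piece contributes $\langle\widetilde\Acal_Q u,\psi(\cdot+z)|_Q\rangle=0$. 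The paper implements this by showing $\|\widetilde\Acal_{Q_m}u_k\|_{\widetilde W^{-1,p}(Q_m;\R^l)}<m^N/k^{1/p}$ for the periodic extensions and passing to the limit. Your sketch omits the decomposition entirely, so as written this step does not go through. Separately, your concern about uniformity over $F\in\Ical$ is resolved far more simply than you propose: Lemma \ref{lem:pequiint} is already stated for an arbitrary sequence $(F_k)_{k\in\NN}$ in $\Ical$, so one just sets $F_k:=F_{v_k,k}$ (the bad functional produced by the contradiction hypothesis for the competitor $v_k$ at index $k$) and applies the lemma as stated; re-deriving its estimates, while correct, is unnecessary.
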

\begin{proof} It is clear that the result does not depend on the center of the cube. We claim that is enough 
to prove it for the cube \(Q:=Q_{1}(0)\) with center \(0\) and side length \(1\). 
Indeed, if \(Q_\rho=Q_\rho(0)\) is the cube with center \(0\) and side length \(\rho\) and \(u_{\rho}\in L^{p}(Q_{\rho};\RR^{d})\) is
a function with \(\supp u_\rho\subset\subset Q_\rho\), we  consider the rescaled function  \(u\in L^{p}(Q;\RR^{d})\) defined by
\[
u(x):=u_\rho(\rho x)\quad \hbox{for every }x\in Q.
\]

Then, \(\supp u\subset\subset Q\). Moreover, by a change of variables in the integrals, we see that
\[
\|u\|_{L^{p}(Q;\R^d)}^{p}<C|Q|=C \quad\Longleftrightarrow \quad
\|u_\rho\|_{L^{p}(Q_{\rho};\R^d)}^{p}<C|Q_{\rho}|=C\rho^{N}.
\]
Furthermore, as we prove next, if
\begin{equation}\label{norm Aurho}
\|\widetilde\Acal_{Q_{\rho}} u_{\rho}\|_{\widetilde W^{-1,p}(Q_\rho;\R^l)}^{p}<\eta|Q_\rho|=\eta\rho^N,
\end{equation}
then
\begin{equation}\label{norm Au}
\|\widetilde \Acal_Q u\|_{\widetilde W^{-1,p}(Q;\R^l)}^{p}<\eta=\eta|Q|.
\end{equation}
In fact, for every 
\(\psi\in W^{1,q}(Q;\R^{l})\), we consider the function \( \psi_\rho\in W^{1,q}(Q_\rho;\RR^{l})\) defined by
\begin{equation*}
\psi_\rho(x):=\psi(\tfrac{x}{\rho})\quad \hbox{for every }x\in Q_\rho.
\end{equation*}
We first observe that  
\begin{equation}\label{norm psi in W1q}
\begin{aligned}
\|\psi\|^{q}_{W^{1,q}(Q;\RR^{l})}&=\int_{Q}| \psi(x)|^q\,dx + \int_{Q}|\nabla \psi(x)|^q\,dx=\int_{Q}| \psi_{\rho}(\rho x)|^q\,dx+\rho^{q}\int_{Q}|\nabla \psi_{\rho}(\rho x)|^q\,dx
\\
&=\rho^{-N}\int_{Q_{\rho}}|\psi_{\rho}(x)|^q\,dx+\rho^{q-N}\int_{Q_{\rho}}|\nabla \psi_{\rho}(x)|^q\,dx\ge \rho^{q-N}\|\psi_\rho\|^{q}_{W^{1,q}(Q_{\rho};\RR^{l})},
\end{aligned}
\end{equation}
where  we used the fact that \(0<\rho\le 1\) in the last inequality.
Moreover,
\begin{equation}\label{eq:dualityrel}
\begin{aligned}
\langle\widetilde\Acal_{Q}u,\psi\rangle&=-\sum_{i=1}^{N}\int_{Q}(A^iu(x))\cdot \partial_i\psi(x)\,dx=
-\rho \sum_{i=1}^{N}\int_{Q}(A^iu_{\rho}(\rho x))\cdot \partial_i\psi_{\rho}(\rho x)\,dx
\\
&=-\rho^{1-N} \sum_{i=1}^{N}\int_{Q_{\rho}}(A^iu_{\rho}(x))\cdot \partial_i\psi_{\rho}(x)\,dx
=\rho^{1-N} \langle\widetilde\Acal_{Q_{\rho}}u_{\rho},\psi_{\rho}\rangle.
\end{aligned}
\end{equation}
Thus, \eqref{norm psi in W1q} and \eqref{eq:dualityrel} yield
\[
\frac{\langle\widetilde\Acal_{Q}u,\psi\rangle}{\|\psi\|_{W^{1,q}(Q;\RR^{l})}}
 \le  \frac{\rho^{1-N} \langle\widetilde\Acal_{Q_{\rho}}u_{\rho},
 \psi_{\rho}\rangle}{\rho^{1-\frac{N}{q}}\|\psi_{\rho}\|_{ W^{1,q}(Q_{\rho};\RR^{l})}}
=\frac{ \langle\widetilde\Acal_{Q_{\rho}}u_{\rho},
\psi_{\rho}\rangle}{\rho^{\frac{N}{p}}\|\psi_{\rho}\|_{ W^{1,q}(Q_{\rho};\RR^{l})}},
\]
from which we conclude that\[
\|\widetilde\Acal_{Q}u\|_{\widetilde W^{-1,p}(Q,\RR^{l})}^{p}\le \frac{1}{\rho^{N}}\|\widetilde\Acal_{Q_{\rho}}u\|_{\widetilde W^{-1,p}(Q_{\rho},\RR^{l})}^{p}.
\]
Therefore, \eqref{norm Aurho} implies \eqref{norm Au}. Consequently, \(u\) fulfils all hypotheses required on \(Q\). 

Assuming that the result is proved for \(Q\), let 
\(\rho\in(0,1]\), \(Q_\rho\), and \(u_\rho\) be given
with  \(u_\rho\in L^p(Q_\rho;\R^d)\),
  \(\|u_\rho\|_{L^{p}(Q_\rho;\R^d)}^{p}<C|Q_\rho|\),
 \(\supp u_\rho\subset\subset Q_\rho\), and
\(\|\widetilde\Acal_{Q_\rho} u_\rho\|_{\widetilde W^{-1,p}(Q_\rho;\R^l)}^{p}<\eta|Q_\rho|\).
As above, set \(u(x):=u_\rho (\rho x)\) for \(x\in Q\). Then,
there exists  \(v\in L^{p}_{\rm per}(Q;\R^d)\) satisfying the properties considered in
the statement for \(Q\) relative to \(u\). Let   \(v_\rho \in L^{p}_{\rm per}(Q_\rho;\R^d)\) be defined by
\[
v_\rho(x):=v(\tfrac{x}{\rho})\quad \hbox{for every }x\in Q_\rho.
\]
 Given \(\psi_\rho\in W^{1,q}_{0}(Q_\rho;\RR^{d})\), we consider
\(\psi\in W^{1,q}(Q;\R^{d})\) defined by \(\psi(x):=\psi_\rho(\rho x)\), \(x\in Q\), and observe that
\begin{equation*}
\begin{aligned}
\|\psi\|^{q}_{W^{1,q}_{0}(Q;\RR^{d})}&=\int_{Q}|\nabla \psi(x)|^q\,dx=\rho^{q}\int_{Q}|\nabla \psi_{\rho}(\rho x)|^q\,dx
\\
&=\rho^{q-N}\int_{Q_{\rho}}|\nabla \psi_{\rho}(x)|^q\,dx= \rho^{q-N}\|\psi_\rho\|^{q}_{W^{1,q}(Q_{\rho};\RR^{d})}.
\end{aligned}
\end{equation*}
Moreover,
\begin{equation*}
\begin{aligned}
\langle v - u,\psi\rangle&=\int_{Q}(v(x)-u(x))\cdot\psi(x)\,dx=
\int_{Q}(v_{\rho}(\rho x)-u_{\rho}(\rho x))\cdot \psi_{\rho}(\rho x)\,dx
\\
&=\rho^{-N} \int_{Q_{\rho}}(v_{\rho}(x)-u_{\rho}(x))\cdot\psi_{\rho}(x)\,dx
=\rho^{-N} \langle v_{\rho} - u_{\rho},\psi_{\rho}\rangle.
\end{aligned}
\end{equation*}
From the two preceding chain of equalities, we conclude that
\[
\frac{\langle v-u,\psi\rangle}{\|\psi\|_{W^{1,q}_{0}(Q;\RR^{d})}}
=\frac{\rho^{-N} \langle v_{\rho} - u_{\rho},\psi_{\rho}\rangle}{\rho^{1-\frac{N}{q}}\|\psi_{\rho}\|_{W^{1,q}_{0}(Q_{\rho};\RR^{d})}}
=\frac{ \langle v_{\rho}-u_{\rho},\psi_{\rho}\rangle}{\rho^{1+\frac{N}{p}}\|\psi_{\rho}\|_{W^{1,q}_{0}(Q_{\rho};\RR^{d})}}.
\]
Thus,
\[\|v_{\rho}-u_{\rho}\|^{p}_{W^{-1,p}(Q_{\rho};\RR^{d})}\leq \rho^{p+N} \|v-u\|^{p}_{W^{-1,p}(Q;\RR^{d})}.
\]
Because \(\|v-u\|_{W^{-1,p}(Q;\R^d)}^{p}<\eps|Q|=\eps\), we obtain
 for \(0<\rho<1\) that
 \[
\|v_{\rho}-u_{\rho}\|_{W^{-1,p}(Q_{\rho};\R^d)}^{p}<\eps\rho^{p+N}<\eps\rho^{N}=\eps|Q_{\rho}|.
\]

The equalities \(\Acal v_{\rho}=0\) in \(\R^N\) and \(\int_{Q_{\rho}} v_{\rho}\,dx=\int_{Q_{\rho}} u_{\rho} \,dx\) can be obtained from the corresponding properties of \(v\) and \(u\) by a change of variables. As for \eqref{F(v)<F(u)+e} for \(v_{\rho}\) and \(u_{\rho}\),
given  \(F_\rho\in\Ical_{\lip}\), let \(f_\rho\in\Fcal_{\lip}\) be the corresponding integrand (see~Theorem~\ref{thm:unconstrained main}).
Let \(f\in \Fcal_{\lip}\)  be the function defined by
\[
f(x,\xi):= f_{\rho}(\rho x,\xi)\quad\hbox{for every }x\in \R^{N}\hbox{
and }\xi\in\R^{d},
\]
and denote by \(F\in\Ical_{\lip}\)  the corresponding functional (see~Remark~\ref{rem:Fcal gives Ffrac}).
Then, by \eqref{F(v)<F(u)+e} for \(Q\), we have
\begin{equation}
\label{eq:FuFvQ}
\begin{aligned}
F(v,Q)<F(u,Q)+\eps|Q|=F(u,Q)+\eps.
\end{aligned}
\end{equation}
On the other hand, 
\begin{equation*}
\begin{aligned}
&F_\rho(v_{\rho}, Q_\rho) = \int_{Q_\rho} f_\rho(x, v_{\rho}(x))\,dx = \rho^N \int_{Q} f_\rho(\rho x, v_{\rho}(\rho x))\,dx=\rho^N \int_{Q} f( x, v( x))\,dx = \rho^N F(v,Q), \\
& F_\rho(u_{\rho}, Q_\rho) = \int_{Q_\rho} f_\rho(x, u_{\rho}(x))\,dx
= \rho^N \int_{Q} f_\rho(\rho x, u_{\rho}(\rho x))\,dx=\rho^N
\int_{Q} f( x, u( x))\,dx = \rho^N F(u,Q),
\end{aligned}
\end{equation*}
which, together with \eqref{eq:FuFvQ}, yield
\[
F_{\rho}(v_{\rho},Q_{\rho})<F_{\rho}(u_{\rho},Q_{\rho})+\eps\rho^N=
F_{\rho}(u_{\rho},Q_{\rho}) +\eps|Q_{\rho}|.
\]
Since \(F_{\rho}\) is an arbitrary element of \(\Ical_{\lip}\), we obtain \eqref{F(v)<F(u)+e} for \(v_{\rho}\) and \(u_{\rho}\).
This concludes the proof of the claim that it is enough to prove the corollary for 
\(Q:=Q_{1}(0)\), which we establish next.

We argue by contradiction. Assume
that the statement for \(Q\) is false. Then, there  exist
\(\eps>0\) and \(C>0\)  such that for every \(k\in\NN\) there
exists  \(u_k\in L^p(Q;\R^d)\), with  \(\|u_k\|_{L^{p}(Q;\R^d)}^{p}<C\),
 \(\supp u_k\subset\subset Q\), and
\(\|\widetilde\Acal_Q u_k \|_{\widetilde W^{-1,p}(Q;\R^l)}^{p}<1/k\),
such that for every
\(v\in L^{p}_{\rm per}(Q;\R^d)\), with \(\|v-u_k\|_{W^{-1,p}(Q;\R^d)}^{p}<\eps\),
 \(\Acal v=0\) in \(\R^N\), and \(\int_Qv\,dx=\int_Qu_{k}\,dx\),
there exists \(F_{k,v}\in \Ical_{\lip}\) such that
\begin{equation}\label{eq:F(v) ge F(u)+e}
F_{v,k}(v,Q)\ge F_{v,k}(u_k,Q)+\eps.
\end{equation}
 Since \((u_k)_{k\in \NN}\) is bounded in \(L^{p}(Q;\R^d)\),
 a subsequence of \((u_k)_{k\in \NN}\), not relabeled, converges
weakly 
in \(L^p(Q;\R^d)\) to a function \(u\in L^p(Q;\R^d)\).
We extend each \(u_k\) to a \(Q\)-periodic function, still denoted
\(u_k\). 
Then, for every \(D\in\Ocal(\R^N)\), \(u_k\) converges weakly in
\(L^p(D;\R^d)\) 
to the periodic extension of \(u\), still denoted by \(u \). 
To prove that  \(\Acal u=0\) in \(\R^N\), we start by setting
 \(Q_m:=Q_m(0)\) for every \(m\in\N\), and we show that
\begin{equation}\label{AQm}
\|\widetilde\Acal_{Q_{m}} u_k \|_{\widetilde W^{-1,p}(Q_{m};\R^l)}<\frac{m^N}{k^{1/p}}.
\end{equation}
To prove this inequality, we define \(A_{m}:=\{1,\dots,m\}^{N}\)
and for every  \(\alpha=(\alpha_{1},\dots,\alpha_{N})\in A_{m}\),
we set
\[x(\alpha):=\Big(-\frac{m}{2}-\frac{1}{2}+\alpha_{1},\dots,
-\frac{m}{2}-\frac{1}{2}+\alpha_{N}\Big).
\]
We observe that
\[
\overline{Q}_m=\bigcup_{\alpha\in A_{m}}\overline{Q}_{1}(x(\alpha)).
\]
Let \(\psi\in W^{1,q}(Q_{m};\R^l)\).  For every \(\alpha\in
A_{m}\) , we define \(\psi_{\alpha}\in W^{1,q}(Q;\R^l)\) by
\(
\psi_{\alpha}(x):= \psi(x+x(\alpha))
\).
Using the \(Q\)-periodicity of \(u_k\), we get
from \eqref{eq:widetildeAu} that
\[
\langle\widetilde\Acal_{Q_{m}} u_k,\psi\rangle=-\sum_{i=1}^{N}\int_{Q_{m}}
(A^{i}u_{k})\cdot\partial_{i}\psi\,dx
=-\sum_{i=1}^{N}\sum_{\alpha\in A_{m}}\int_{Q} (A^{i}u_{k})\cdot\partial_{i}\psi_{\alpha}\,dx
=\sum_{\alpha\in A_{m}}\langle\widetilde\Acal_{Q} u_k,\psi_{\alpha}\rangle.
\]
Since \(\|\widetilde\Acal_Q u_k \|_{\widetilde W^{-1,p}(Q;\R^l)}^{p}<1/k\),
we deduce that
\[
\big|\langle\widetilde\Acal_{Q_{m}} u_k,\psi\rangle\big| \leq
\frac{1}{k^{1/p}}\sum_{\alpha\in A_{m}}\|\psi_{\alpha}\|_{W^{1,q}(Q;\R^l)}
\leq \frac{m^{N}}{k^{1/p}}\|\psi\|_{W^{1,q}(Q_m;\R^l)},
\]
which concludes the proof of \eqref{AQm}.
Because  \(u_k\) converges to \(u\) weakly in
\(L^p(Q_{m};\R^{d})\), 
we obtain from  \eqref{AQm} that
\(\widetilde\Acal_{Q_{m}} u=0\) for every  \(m\in\N\). We then
conclude that \(\Acal u=0\) in \(\R^N\).

By Lemma \ref{lem:equiint}, there exists a sequence \((v_k)_{k\in
\NN}\) in  
\(L^p_{\rm per}(Q;\R^d)\) satisfying \eqref{eq:newseq1} with
\(D=Q\).
Since the embedding of \(L^p(Q;\R^d)\) into \(W^{-1,p}(Q;\R^d)\)
 is compact, 
the sequences \((v_k)_{k\in \NN}\) and \((u_k)_{k\in \NN}\) converge to \(u\) strongly
in
\(W^{-1,p}(Q;\R^d)\), and so
 \(\|v_k-u_k\|_{W^{-1,p}(Q;\R^d)}^{p}<\eps\) for \(k\) large
enough.
 
For every \(k\), let \(F_k:=F_{v_k,k}\). By Lemma~\ref{lem:pequiint},
we have
\begin{equation*}
\limsup_{k\to\infty}\big(F_k(v_k,Q)-F_k(u_k,Q)\big)\le 0,
\end{equation*}
while \eqref{eq:F(v) ge F(u)+e} gives
\(F_k(v_k,Q)-F_k(u_k,Q)\ge \eps\)
for \(k\) large enough. This contradiction concludes the proof.
\end{proof}

\begin{definition}\label{FAcal}
For every \(F\in\Ical\) and every \(D\in\Ocal(\R^N)\), let \(F^{\Acal}(\cdot,D)\)
be the restriction of \(F(\cdot,D)\) to \(\ker\Acal_D\).
\end{definition}

\begin{remark}\label{rem:Gammaseq}
Let \((F_k)_{k\in\NN}\) be a sequence of functionals in \(\Ical\),
let \(F\in \Ical\), and let \(D\in\Ocal(\R^N)\). If we extend
\(F^{\Acal}(\cdot,D)\) to \(L^p(D;\RR^d)\) by setting \(F^{\Acal}(u,D)=+\infty\)
for every \(u\in L^p(D;\RR^d)\setminus \ker\Acal_D\), by 
 the lower bound in (a) of Definition \ref{def:Ffrak} 
we can apply \cite[Proposition 8.16]{DM93}
to conclude that the sequence \((F_k^{\Acal}(\cdot,D))_{k\in\NN}\)
\(\Gamma\)-converges to \(F^{\Acal}(\cdot,D)\) in \(\ker\Acal_D\)
with respect to
the weak topology of \(L^p(D;\RR^d)\) if and only if
\begin{itemize}
\item[(LI)]for every \(u\in \ker\Acal_D\) and every sequence
\((u_k)_{k\in\NN}\) in \(\ker\Acal_D\) converging to \(u\) weakly
in
\(L^p(D;\RR^d)\), we have
\(\displaystyle
F^{\Acal}(u,D)\le \liminf_{k\to\infty}F_k^{\Acal}(u_k,D)\),
\item[(LS)]for every \(u\in \ker\Acal_D\) there exists a sequence
\((v_k)_{k\in\NN}\) in \(\ker\Acal_D\) converging to \(u\) weakly
in
\(L^p(D;\RR^d)\) such that
\(\displaystyle
F^{\Acal}(v,D)\ge \limsup_{k\to\infty}F_k^{\Acal}(v_k,D)\).
\end{itemize}
\end{remark}

We now prove the equivalence between  \(\Gamma\)-convergence
with respect to
the topology induced by \(\Vert\cdot\Vert_D^\Acal\) and \(\Gamma\)-convergence
in the \(\Acal\)-free setting.
\begin{theorem}\label{main constrained}
Let \((F_k)_{k\in\NN}\) be a sequence of functionals in \(\Ical_{\lip}\),
let \(F\in \Ical\), and let \(F_k^{\Acal}\) and  \(F^{\Acal}\)
be as in Definition \ref{FAcal}. 
The the following conditions are
equivalent:
\begin{itemize}
\item[(a)]
For every \(D\in\Ocal(\R^N)\), the sequence \(F_k(\cdot,D)\) \(\Gamma\)-converges
to \(F(\cdot,D)\)  with respect to
the topology induced by \(\Vert\cdot\Vert_D^\Acal\)  on \(L^p(D;\RR^d)\);
\item[(b)]
for every \(D\in\Ocal(\R^N)\), the sequence \((F_k^{\Acal}(\cdot,D))_{k\in\NN}\)
\(\Gamma\)-converges to \(F^{\Acal}(\cdot,D)\) in \(\ker\Acal_D\)
with respect to
the weak topology of \(L^p(D;\RR^d)\).
\end{itemize}
\end{theorem}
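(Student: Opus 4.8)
The plan is to prove the equivalence by going through the characterization recorded in Remark~\ref{rem:Gammaseq}. Condition (b) is equivalent to the conjunction of the liminf inequality (LI) and the existence of a recovery sequence (LS), both taken for competitors lying in \(\ker\Acal_D\) and with respect to the weak \(L^p\)-topology. So I would show that (a) implies each of (LI), (LS), and conversely that (LI) and (LS) together with (a)'s failure would be contradictory; actually the cleaner route is: (a) \(\Rightarrow\) (b) directly, and (b) \(\Rightarrow\) (a) via the integral representation and the equi-integrability machinery.

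\medskip
\textbf{Proof that (a) implies (b).}
Fix \(D\in\Ocal(\R^N)\). For (LI), let \(u\in\ker\Acal_D\) and let \((u_k)_{k\in\NN}\subset\ker\Acal_D\) converge to \(u\) weakly in \(L^p(D;\R^d)\). Since \(\Acal_D u_k=0=\Acal_D u\), we have \(\Acal_D u_k\to\Acal_D u\) strongly in \(W^{-1,p}(D;\R^l)\); moreover the compact embedding \(L^p(D;\R^d)\embed W^{-1,p}(D;\R^d)\) (valid because \(D\) is bounded) gives \(u_k\to u\) strongly in \(W^{-1,p}(D;\R^d)\). Hence \(\Vert u_k-u\Vert_D^\Acal\to 0\), and the \(\Gamma\)-liminf inequality from (a) yields \(F(u,D)\le\liminf_k F_k(u_k,D)\), i.e.\ \(F^\Acal(u,D)\le\liminf_k F_k^\Acal(u_k,D)\). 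For (LS), given \(u\in\ker\Acal_D\), apply (a) to obtain a sequence \((\tilde u_k)_{k\in\NN}\subset L^p(D;\R^d)\) with \(\Vert\tilde u_k-u\Vert_D^\Acal\to 0\) and \(\limsup_k F_k(\tilde u_k,D)\le F(u,D)\). By Remark~\ref{weak Lp versus W-1p} (using the lower bound in \eqref{eq:pgrowth F} and the boundedness of \(F_k(\tilde u_k,D)\)), we get \(\tilde u_k\weakly u\) weakly in \(L^p(D;\R^d)\), while \(\Vert\tilde u_k-u\Vert_D^\Acal\to 0\) forces \(\Acal_D\tilde u_k\to\Acal_D u=0\) strongly in \(W^{-1,p}(D;\R^l)\). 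Now invoke Lemma~\ref{lem:equiint} to produce \((v_k)_{k\in\NN}\) with \(v_k\in\ker\Acal_D\), \(v_k\weakly u\) weakly in \(L^p(D;\R^d)\), and \(\Vert\tilde u_k-v_k\Vert_{L^r(D;\R^d)}\to0\) for all \(r<p\); Lemma~\ref{lem:pequiint} then gives \(\limsup_k F_k(v_k,D)\le\limsup_k F_k(\tilde u_k,D)\le F(u,D)=F^\Acal(u,D)\). Since \(v_k\in\ker\Acal_D\), this is exactly (LS). By Remark~\ref{rem:Gammaseq}, (b) holds.

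\medskip
\textbf{Proof that (b) implies (a).}
Fix \(D\in\Ocal(\R^N)\) and extend \(F_k^\Acal(\cdot,D)\) and \(F^\Acal(\cdot,D)\) by \(+\infty\) outside \(\ker\Acal_D\). For the \(\Gamma\)-liminf inequality for \(\Vert\cdot\Vert_D^\Acal\): let \(u\in L^p(D;\R^d)\) and \((u_k)_{k\in\NN}\) with \(\Vert u_k-u\Vert_D^\Acal\to0\). We may assume \(\liminf_k F_k(u_k,D)<+\infty\), so along a subsequence \(F_k(u_k,D)\) is bounded, hence (Remark~\ref{weak Lp versus W-1p}) \(u_k\weakly u\) weakly in \(L^p\) and \(\Acal_D u_k\to\Acal_D u\) strongly in \(W^{-1,p}(D;\R^l)\); but \(\Vert\Acal_D u_k\Vert_{W^{-1,p}}\to0\) as well (since \(u_k\to u\) in the \(\Vert\cdot\Vert_D^\Acal\)-norm and by definition of that norm, passing to the limit forces \(\Acal_D u=0\)), so \(u\in\ker\Acal_D\). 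Apply Lemma~\ref{lem:equiint} to \((u_k)\) to get \((v_k)\subset\ker\Acal_D\) with \(v_k\weakly u\) weakly in \(L^p\) and \(\Vert u_k-v_k\Vert_{L^r}\to0\) for \(r<p\); then \(\liminf_k F_k(u_k,D)\ge\liminf_k F_k(v_k,D)\) by Lemma~\ref{lem:pequiint}, and \(\liminf_k F_k(v_k,D)\ge F^\Acal(u,D)=F(u,D)\) by (LI) from (b). For the recovery-sequence property: given \(u\in L^p(D;\R^d)\), if \(u\notin\ker\Acal_D\) then \(F(u,D)<+\infty\) but we still need a sequence \(\Vert u_k-u\Vert_D^\Acal\to0\) with \(\limsup_k F_k(u_k,D)\le F(u,D)\); however such \(u_k\) would have bounded energy, forcing \(u\in\ker\Acal_D\) by the argument just given, a contradiction — so there is nothing to prove unless \(u\in\ker\Acal_D\) (more precisely, for \(u\notin\ker\Acal_D\) one must check that the \(\Gamma\)-limit value is consistent; here one takes \(u_k=u\) and uses \eqref{eq:pgrowth F}, and since no sequence converges to \(u\) in \(\Vert\cdot\Vert_D^\Acal\) while staying in \(\ker\Acal_D\), the relevant \(\Gamma\)-limsup is at least \(F(u,D)\) trivially from \eqref{eq:pgrowth F} applied to \(F\) — I would phrase this carefully). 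For \(u\in\ker\Acal_D\), take the sequence \((v_k)\) from (LS): it satisfies \(v_k\in\ker\Acal_D\), \(v_k\weakly u\) weakly in \(L^p\) (hence \(v_k\to u\) strongly in \(W^{-1,p}(D;\R^d)\) by compact embedding, and \(\Acal_D v_k=0=\Acal_D u\)), so \(\Vert v_k-u\Vert_D^\Acal\to0\), and \(\limsup_k F_k(v_k,D)\le F^\Acal(u,D)=F(u,D)\). Thus (a) holds.

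\medskip
The step I expect to be the main obstacle is the use of Lemma~\ref{lem:pequiint} to pass \eqref{eq:pgrowth F}-type lower bounds through the replacement \(u_k\leadsto v_k\): one must be sure that the modification procedure of Lemma~\ref{lem:equiint} produces \(\Acal\)-free fields \emph{and} that the energy does not jump up, which is precisely what Lemmas~\ref{lem:equiint}--\ref{lem:pequiint} guarantee, but only once the sequences are known to be bounded in \(L^p(D;\R^d)\) — so the bookkeeping of which convergences are weak-\(L^p\), which are strong-\(W^{-1,p}\), and which are strong-\(L^r\) for \(r<p\) has to be done with care. A secondary subtlety is the handling of \(u\notin\ker\Acal_D\) in the (b) \(\Rightarrow\) (a) direction: one invokes \cite[Proposition 8.16]{DM93} (already cited in Remark~\ref{rem:Gammaseq}) which makes the extension-by-\(+\infty\) equivalence precise, so that the whole argument reduces cleanly to verifying (LI) and (LS).
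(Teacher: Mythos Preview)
Your direction (a)\(\Rightarrow\)(b) is correct and matches the paper's argument essentially line for line.

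The direction (b)\(\Rightarrow\)(a), however, has a genuine gap. You write that from \(\Vert u_k-u\Vert_D^\Acal\to 0\) one gets ``\(\Vert\Acal_D u_k\Vert_{W^{-1,p}}\to 0\) as well \ldots\ passing to the limit forces \(\Acal_D u=0\)''. This is false: the norm \(\Vert\cdot\Vert_D^\Acal\) controls \(\Vert\Acal_D(u_k-u)\Vert_{W^{-1,p}}\), so you only obtain \(\Acal_D u_k\to\Acal_D u\), not \(\Acal_D u_k\to 0\), and there is no reason for \(u\) to lie in \(\ker\Acal_D\). Consequently Lemma~\ref{lem:equiint} cannot be invoked (it needs \(\Acal_D u_k\to 0\)), and more fundamentally hypothesis (b) gives you no information whatsoever about \(F(u,D)\) when \(u\notin\ker\Acal_D\). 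Your parenthetical attempt to handle this case (``take \(u_k=u\) and use \eqref{eq:pgrowth F}'') only yields \(\limsup_k F_k(u,D)\le c_0(|D|+\Vert u\Vert_{L^p}^p)\), which is not \(\le F(u,D)\); and \cite[Proposition~8.16]{DM93} is about the sequential characterization of \(\Gamma\)-convergence within \(\ker\Acal_D\), so it cannot bridge the gap to general \(u\in L^p\) either.

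The paper circumvents this by an indirect argument: using the compactness Theorem~\ref{thm:Gamma-unconst}, extract a subsequence that \(\Gamma\)-converges (in the \(\Vert\cdot\Vert_D^\Acal\)-topology) to some \(\hat F\in\Ical_{\lsc}\). By the already-proven implication (a)\(\Rightarrow\)(b), one gets \(\hat F=F\) on \(\ker\Acal_D\) for every \(D\). Now the key point: constants belong to \(\ker\Acal_D\), so formula \eqref{eq:def f(x,xi)} gives \(\hat f=f\), and then the integral representation Theorem~\ref{thm:unconstrained main} forces \(\hat F=F\) on all of \(L^p\). Since the limit is independent of the subsequence, the Urysohn property of \(\Gamma\)-convergence yields (a) for the full sequence. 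The integral representation is doing essential work here---it is the mechanism that propagates information from \(\ker\Acal_D\) (where (b) lives) to all of \(L^p\) (where (a) lives), and your direct approach has no substitute for it.
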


\begin{proof}\
Assume (a). Let  \(u\in \ker\Acal_D\) and let \((u_k)_{k\in\NN}\)
be a sequence in \(\ker\Acal_D\) converging to \(u\) weakly in
\(L^p(D;\RR^d)\). Since the embedding of \(L^p(D;\RR^d)\) into
\(W^{-1,p}(D;\RR^d)\) is compact and \(\Acal u_k=\Acal u=0\),
the  sequence \((u_k)_{k\in\NN}\) converges to  \(u\) strongly in
the topology induced by the norm \(\Vert\cdot\Vert_D^\Acal\)
 on \(L^p(D;\RR^d)\). By (a), this implies that
 \begin{equation*}
 F^{\Acal}(u,D)\le \liminf_{k\to\infty}F_k^{\Acal}(u_k,D).
 \end{equation*}
Hence, condition (LI) of Remark \ref{rem:Gammaseq} is satisfied.

To prove (LS), we fix \(u\in \ker\Acal_D\) and  a sequence \((u_k)_{k\in\NN}\subset
L^p (D;\R^d)\)  such that
\begin{equation*}
\lim_{k\to\infty} \Vert u_k -u\Vert^{\Acal}_D=0 \quad\text{and}\quad
\limsup_{k\to\infty}
F_k(u_k, D) = F(u, D)=F^{\Acal}(u, D)<+\infty,
\end{equation*}
which exists by \cite[Proposition~8.1]{DM93}. We observe further
that \(u_k \weakly u\) weakly in \(L^p(D;\RR^d)\) (see Remark~\ref{weak Lp versus W-1p}). Let \((v_k)_{k\in\NN}\)
be the sequence
 in \(\ker\Acal_D\) provided by Lemma \ref{lem:equiint}.
 By Lemma \ref{lem:pequiint}, we have
 \begin{equation*}
\limsup_{k\to\infty} F^{\Acal}_k(v_k, D)= \limsup_{k\to\infty}
F_k(v_k, D)\le   \limsup_{k\to\infty}
F_k(u_k, D) = F^{\Acal}(u, D),
\end{equation*}
which proves condition (LS) of Remark \ref{rem:Gammaseq}. This
concludes the proof of (b).

Assume (b). By the  Theorem \ref{thm:Gamma-unconst},
there exist a subsequence, which we do not relabel,
and a functional  \(\hat F\in\Ical_{\lsc}\) such that for every
\(D\in\Ocal(\R^N)\), the sequence
\((F_k(\cdot,D))_{k\in\NN}\)
\(\Gamma\)-converges to \(\hat F(\cdot,D)\)  with respect to
the
topology induced by \(\Vert\cdot\Vert_D^\Acal\) on \(L^p(D;\RR^d)\).
Since (a) implies (b), we have \(\hat F(u,D)=F(u,D)\) for every
\(D\in \Ocal(\R^N)\) and every
\(u\in \ker\Acal_D\). Let  \(\hat f\)  and \(f\) be the integrands
corresponding to \(\hat F\)  and \(F\), respectively,
defined by \eqref{eq:def f(x,xi)}. Since every constant function
belongs to \(\ker\Acal_D\), we deduce that  \(\hat f=f\); 
 by Theorems
\ref{thm:unconstrained main} and \ref{th:Babadjian}
this implies \(\hat F=F\). Since the \(\Gamma\)-limit does not
depend on the subsequence, we obtain (a) 
by the Urysohn property of \(\Gamma\)-convergence (see \cite[Proposition
8.3]{DM93}).
\end{proof}

From Theorems \ref{thm:Gamma-unconst} and  \ref{main constrained},
we deduce the following compactness result in the
\(\Acal\)-free setting.
\begin{corollary}\label{cor:Afree compactness}
Let \((F_k)_{k\in\NN}\) be a sequence of functionals in \(\Ical_{\lip}\)
and let \(F_k^{\Acal}\) be as in Definition \ref{FAcal}. 
Then, there exist a subsequence, which we do not relabel,
and a functional  \(F\in\Ical_{\lsc}\) such that
for every \(D\in\Ocal(\R^N)\), the sequence \((F_k^{\Acal}(\cdot,D))_{k\in\NN}\)
\(\Gamma\)-converges to \(F^{\Acal}(\cdot,D)\) in \(\ker\Acal_D\)
with respect to
the weak topology of \(L^p(D;\RR^d)\).
\end{corollary}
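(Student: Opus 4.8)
The plan is to combine the two main results already established in the preceding sections, namely the compactness theorem with respect to the $\|\cdot\|^{\Acal}_D$-topology, Theorem~\ref{thm:Gamma-unconst}, and the equivalence between the two notions of $\Gamma$-convergence, Theorem~\ref{main constrained}. No new argument is needed beyond checking that the output of the first theorem feeds correctly into the second.

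First I would apply Theorem~\ref{thm:Gamma-unconst} to the given sequence $(F_k)_{k\in\NN}\subset\Ical$. This produces a subsequence, which we do not relabel, together with a limit functional $F\in\Ical_{\lsc}$ such that for every $D\in\Ocal(\R^N)$ the sequence $(F_k(\cdot,D))_{k\in\NN}$ $\Gamma$-converges to $F(\cdot,D)$ with respect to the topology induced by the norm $\|\cdot\|^{\Acal}_D$ on $L^p(D;\R^d)$. The point that makes the statement work uniformly is already built into Theorem~\ref{thm:Gamma-unconst}: a single subsequence serves simultaneously for all $D\in\Ocal(\R^N)$, so the conclusion can be phrased for every $D$ at once.

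Next, since $\Ical_{\lsc}\subset\Ical$, the pair consisting of the chosen subsequence of $(F_k)_{k\in\NN}$ and the functional $F$ satisfies hypothesis (a) of Theorem~\ref{main constrained}. Invoking the implication (a)$\Rightarrow$(b) of that theorem then gives precisely the desired conclusion: for every $D\in\Ocal(\R^N)$, the restricted functionals $(F_k^{\Acal}(\cdot,D))_{k\in\NN}$ $\Gamma$-converge to $F^{\Acal}(\cdot,D)$ in $\ker\Acal_D$ with respect to the weak topology of $L^p(D;\R^d)$, where $F_k^{\Acal}$ and $F^{\Acal}$ are as in Definition~\ref{FAcal}. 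The only point that needs a word of justification is that the limit functional $F$ delivered by Theorem~\ref{thm:Gamma-unconst} meets the standing hypothesis of Theorem~\ref{main constrained}, i.e.\ that $F\in\Ical$; this is immediate because $F\in\Ical_{\lsc}\subset\Ical$. Accordingly, there is no real obstacle in this proof: all the substantive work was carried out in the proofs of Theorems~\ref{thm:Gamma-unconst} and \ref{main constrained} (in particular, the measure property, the $p$-growth and $p$-Lipschitz bounds, and the passage from the $\|\cdot\|^{\Acal}_D$-topology to the weak $L^p$-topology via the modification Lemmas~\ref{lem:equiint} and \ref{lem:pequiint}).
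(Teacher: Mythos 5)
Your proof is correct and coincides with the paper's own argument: apply Theorem~\ref{thm:Gamma-unconst} to extract the subsequence and the limit $F\in\Ical_{\lsc}\subset\Ical$, then invoke the implication (a)$\Rightarrow$(b) of Theorem~\ref{main constrained} to transfer the $\Gamma$-convergence from the $\|\cdot\|^{\Acal}_D$-topology to the $\Acal$-free setting. Nothing further is needed.
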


\section{The integrand obtained from minimum values on small
cubes}

In this section, given an integrand \(f\in\Fcal_{\qc}\), we reconstruct
its values \(f(x,\xi)\) at a pair \((x,\xi)\in \R^N\times\R^d\)
by taking the limit, as \(\rho\to0^+\), of the infima of some
minumum problems related to \(f\) and \(\xi\) in the cubes \(Q_\rho(x)\).
\begin{definition}\label{def:U0} For every cube \(Q\subset\RR^N\),
we consider the sets
\begin{equation*}
\begin{aligned}
&\Ucal(Q):=\bigg\{u\in L^{p}_{\loc}(\RR^N;\RR^{d})\colon u \text{
is
\(Q\)-periodic} ,\int_{Q}u\,dx=0,\text{and }\Acal u=0 \text{ in
$\RR^N$ in the sense of \eqref{eq:Au on RN}}\bigg\},\\
& \Ucal_c(Q):=\bigg\{u\in L^{p}(Q;\RR^{d})\colon \supp u \subset\subset
Q  ,\int_{Q}u\,dx=0,\text{and }\Acal_Q u=0 \text{ in \(Q\) in the
sense of \eqref{eq:Au}}\bigg\}.
\end{aligned}
\end{equation*}
For every \(f\in\Fcal\)  and \(\xi\in
\RR^d\), we set
\begin{equation}
\label{eq:Mcset}
\begin{aligned}
&M(f,\xi,Q):= \inf \big\{ F(\xi + u,Q): u\in \Ucal(Q)\big\},\\
&M_c(f,\xi,Q):= \inf \big\{ F(\xi + u,Q): u\in \Ucal_c(Q)\big\},
\end{aligned}
\end{equation}
where $F$ is defined by \eqref{eq:Ffree}.
\end{definition}

\begin{remark}\label{rmk:ext}
If \(u\in \Ucal_c(Q)\), then its \(Q\)-periodic extension belongs
to \(\Ucal(Q)\).
In fact, denoting by \(\tilde u\)  the  \(Q\)-periodic extension
of \(u\), let us prove that \(\Acal\tilde  u=0\) in
$\RR^N$.  Given \(\psi \in C^\infty_c(\RR^N;\RR^l)\), we can
find a finite family of mutually disjoint translations of \(Q\),
\(\{Q_j\}_{j=1}^M\), such that
\begin{equation*}
\begin{aligned}
\supp \psi \subset\subset {\rm int }\bigg(\bigcup_{j=1}^M \overline{Q_j}\bigg).
\end{aligned}
\end{equation*}
Moreover, setting \(K:=\supp u \subset\subset Q\), let \(K_j\subset\subset
Q_j\) be the corresponding translations of \(K\). In other words,
\(K_j=\supp \tilde u_{|Q_j}\). Finally, let \(\theta_j\in C^\infty_c(Q_j;[0,1])\)
be a cut-off function with \(\theta_j=1\) on \(K_j\). Then,
\begin{equation*}
\begin{aligned}
\langle\Acal \tilde u,\psi\rangle&=-\sum_{i=1}^{N}\sum_{j=1}^M\int_{Q_j}(A^i\tilde
u)\cdot
\partial_i\psi\,
dx = -\sum_{i=1}^{N}\sum_{j=1}^M\int_{Q_j}(A^i\tilde u)\cdot
\partial_i(\theta_j\psi)\,
dx \\&= -\sum_{i=1}^{N}\sum_{j=1}^M\int_{Q}(A^iu)\cdot
\partial_i \tilde \psi_j\,
dx=0,
\end{aligned}
\end{equation*}
where \(\tilde \psi_j\in C^\infty_c(Q;\RR^l)\) is the translation
of \((\theta_j\psi)_{|Q_j}\) from \(Q_j\) to \(Q\), and where we used
the fact that \(\Acal_Q u=0\) in the last equality.

Hence, \(M(f,\xi,Q) \leq M_c(f,\xi,Q)\) for every cube \(Q\subset\R^N\),
every \(f\in\Fcal\), and every \(\xi\in \R^d\).
\end{remark}

We now prove that the value \(f(x,\xi)\) of an integrand \(f\in
\Fcal_{\qc}\) can be reconstructed using the minimum values
introduced in
\eqref{eq:Mcset} on cubes shrinking to \(x\). 

\begin{theorem}\label{thm:formula}
Let  \(f\in\Fcal_{\lip}\cap\Fcal_{\qc}\).
Then, for a.e.~ \(x\in\RR^N\) and every \(\xi\in\RR^d\),  we
have that
\begin{equation*}
\begin{aligned}
f(x,\xi) = \lim_{\rho\to 0^+} \frac{M(f,\xi, Q_{\rho}(x))}{\rho^N}=\lim_{\rho\to0^+}
\frac{M_c(f,\xi, Q_{\rho}(x))}{\rho^N}.
\end{aligned}
\end{equation*}
\end{theorem}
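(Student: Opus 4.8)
The plan is to establish the two equalities via a squeeze, using on one side the integral representation of $F$ together with the Lebesgue differentiation theorem, and on the other the fact that $f\in\Fcal_\qc$ is $\Acal$-quasiconvex to produce the matching lower bound. Throughout, $F$ denotes the functional associated with $f$ by \eqref{eq:Ffree}, and I will use Remark~\ref{rmk:ext}, which gives $M(f,\xi,Q)\le M_c(f,\xi,Q)$ for every cube. So it suffices to prove, for a.e.\ $x$ and every $\xi$,
\begin{equation*}
f(x,\xi)\ \le\ \liminf_{\rho\to0^+}\frac{M(f,\xi,Q_\rho(x))}{\rho^N}
\quad\text{and}\quad
\limsup_{\rho\to0^+}\frac{M_c(f,\xi,Q_\rho(x))}{\rho^N}\ \le\ f(x,\xi).
\end{equation*}

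\textbf{Upper bound (via $M_c$).} For fixed $\xi$ the constant $u\equiv\xi$ is admissible (take $u=0\in\Ucal_c(Q_\rho(x))$), so $M_c(f,\xi,Q_\rho(x))\le F(\xi,Q_\rho(x))=\int_{Q_\rho(x)}f(y,\xi)\,dy$. Since $f\in\Fcal$ satisfies the $p$-growth bound \eqref{eq:pgrowth}, the function $y\mapsto f(y,\xi)$ is in $L^1_{\loc}$, and by the Lebesgue differentiation theorem $\frac1{\rho^N}\int_{Q_\rho(x)}f(y,\xi)\,dy\to f(x,\xi)$ for a.e.\ $x$. Running this over a countable dense set of $\xi$ and invoking the $p$-Lipschitz continuity \eqref{eq:pLip} (which is uniform on bounded sets of $\xi$, cf.\ \eqref{eq:pLip22}) to upgrade to all $\xi$, we get $\limsup_{\rho\to0^+}\rho^{-N}M_c(f,\xi,Q_\rho(x))\le f(x,\xi)$ for a.e.\ $x$ and every $\xi$. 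Note this half uses neither $\Acal$-quasiconvexity nor the formula \eqref{eq:def f(x,xi)} defining $f$ beyond its integral-representation consequence.

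\textbf{Lower bound (via $M$).} Fix $\xi$ and a point $x$ that is simultaneously a Lebesgue point of $y\mapsto f(y,\xi)$ and of $y\mapsto f(y,0)$ (enough to control the growth), and at which $f(x,\cdot)$ is $\Acal$-quasiconvex — the latter holds for a.e.\ $x$ since $f\in\Fcal_\qc$. Pick a sequence $\rho=\rho_j\to0^+$ realizing the liminf, and for each $j$ an almost-optimal competitor $u_j\in\Ucal(Q_{\rho_j}(x))$ with $F(\xi+u_j,Q_{\rho_j}(x))\le M(f,\xi,Q_{\rho_j}(x))+\rho_j^N/j$. Rescale to the unit cube: setting $w_j(z):=u_j(x+\rho_j z)$ on $Q:=Q_1(0)$, each $w_j$ is $Q$-periodic, mean-zero, $\Acal w_j=0$ in $\R^N$, and $\frac1{\rho_j^N}M(f,\xi,Q_{\rho_j}(x))+\tfrac1j\ge \int_Q f(x+\rho_j z,\xi+w_j(z))\,dz$. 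Using \eqref{eq:pgrowth} one sees $\|w_j\|_{L^p(Q)}$ stays bounded (else the right side would blow up since $\rho_j^{-N}\int_{Q_{\rho_j}(x)}f(y,\xi)\,dy$ converges), so after passing to a subsequence $w_j\weakly w$ in $L^p(Q;\R^d)$ with $w$ still $Q$-periodic, mean-zero, and $\Acal w=0$. Now I replace the weak limit by a test function for $\Acal$-quasiconvexity: by Lemma~\ref{lem:equiint} applied on $Q$, there is a $p$-equi-integrable sequence, $Q$-periodic with $\Acal=0$, asymptotically equal to $(w_j)$ in every $L^r$ with $r<p$, and by Lemma~\ref{lem:pequiint} this modification does not increase the liminf of the energies; by density of smooth $\Acal$-free periodic fields (this is the standard FoMu99 argument) one may then assume the competitors converge to an admissible smooth $w$ with mean zero. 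Finally, freeze the $x$-variable: write $\int_Q f(x+\rho_j z,\xi+w_j(z))\,dz = \int_Q f(x,\xi+w_j(z))\,dz + (\text{error})$, where the error is controlled by $\int_Q|f(x+\rho_j z,\cdot)-f(x,\cdot)|$ evaluated along an $L^p$-bounded, $p$-equi-integrable sequence of arguments, hence tends to $0$ by the $p$-Lipschitz bound \eqref{eq:pLip22} together with the Lebesgue-point property at $x$ (this is where equi-integrability is essential). Passing to the limit and using lower semicontinuity of $v\mapsto\int_Q f(x,\xi+v)$ along this sequence — which holds because $f(x,\cdot)$ is $\Acal$-quasiconvex, by \cite[Theorem~3.7]{FoMu99} — gives $\liminf_j \rho_j^{-N}M(f,\xi,Q_{\rho_j}(x))\ge \int_Q f(x,\xi+w(z))\,dz\ge f(x,\xi)$, the last step being the defining inequality of $\Acal$-quasiconvexity at $x$. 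Again a countable-dense-$\xi$ plus $p$-Lipschitz argument extends this to all $\xi$ simultaneously on a full-measure set of $x$.

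\textbf{Conclusion and main obstacle.} Combining the two bounds with $M\le M_c$ forces all three quantities to converge to $f(x,\xi)$, and in particular the limits (not merely liminf/limsup) exist; this proves the theorem. The delicate point — and the one I expect to consume most of the work — is the freezing of the $x$-variable in the lower bound: the competitors $u_j$ are only $L^p$-bounded a priori, so $|f(x+\rho_j z,\xi+w_j)-f(x,\xi+w_j)|$ need not be equi-integrable without the modification, and one must carefully interleave the $\Acal$-free equi-integrable replacement of Lemma~\ref{lem:equiint}, the energy comparison of Lemma~\ref{lem:pequiint}, and the Lebesgue-point estimate so that all the errors vanish in the limit while keeping the replaced sequence admissible (periodic, mean-zero, $\Acal$-free). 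The $\Acal$-quasiconvexity lower semicontinuity is then a black-box application of \cite[Theorem~3.7]{FoMu99}.
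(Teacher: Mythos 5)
Your overall strategy matches the paper's: the upper bound via $u\equiv 0$ and Lebesgue differentiation is exactly the paper's argument, and the lower bound proceeds by rescaling near-optimal competitors to the unit cube, replacing them by a $p$-equi-integrable $\Acal$-free $Q$-periodic sequence, and exploiting $\Acal$-quasiconvexity at $x$. There are, however, two points worth flagging. First, your route in the lower bound is more roundabout than the paper's: you extract a weak limit $w$ of the equi-integrable replacements and then invoke lower semicontinuity (\cite[Theorem~3.7]{FoMu99}) followed by the quasiconvexity inequality at $w$, which in turn requires extending the defining inequality of $\Acal$-quasiconvexity from smooth $Q$-periodic test fields to $L^p$ ones by density. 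The paper avoids both of these entirely by applying the quasiconvexity inequality \emph{directly to each} $\tilde u_j$, i.e.\ $f(x,\xi)\le\rho_j^{-N}\int_{Q_{\rho_j}(x)}f(x,\xi+\tilde u_j(y))\,dy$ for every fixed $j$, and then comparing $f(x,\cdot)$ with $f(y,\cdot)$; no weak limit, no lower semicontinuity, and no density of smooth fields are needed. Second, and more substantively, you correctly identify the ``freezing of the $x$-variable'' as the crux, but the mechanism you state is not a proof: the $p$-Lipschitz bound~\eqref{eq:pLip22} controls variation in $\xi$, not in $x$, so it cannot by itself show $\int_Q\bigl(f(x+\rho_jz,\xi+\tilde v_j)-f(x,\xi+\tilde v_j)\bigr)\,dz\to0$. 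The missing ingredient is the modulus $\omega_m(x,y):=\sup_{|\eta|\le m}|f(y,\eta)-f(x,\eta)|$ together with the nontrivial fact~\eqref{eq:omegatoz} that $\rho^{-N}\int_{Q_\rho(x)}\omega_m(x,y)\,dy\to0$ for a.e.\ $x$ and every $m$; the paper proves this via finite $\tfrac1k$-nets of $\overline{B_m(0)}$, the $p$-Lipschitz bound (now used only to reduce to the net), and Lebesgue points for the countable family $f(\cdot,\eta_i^{m,k})$. One then splits $Q_{\rho_j}(x)$ into $\{|\xi+\tilde u_j|\le m\}$ and $\{|\xi+\tilde u_j|>m\}$, controls the first part via $\omega_m$ and the second via $p$-equi-integrability. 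Your sketch names all the ingredients (Lebesgue points, $p$-Lipschitz, equi-integrability) but does not assemble them into a correct argument, and, as written, the claim that the error ``tends to $0$ by the $p$-Lipschitz bound together with the Lebesgue-point property'' is not valid.
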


\begin{proof}
For \(\xi\in\RR^d\) fixed, we  have for every Lebesgue point
for \(f(\cdot,\xi)\)
 that
\begin{equation}\label{eq:lebpoint}
\begin{aligned}
f(x,\xi) = \lim_{\rho\to0^+} \frac{1}{\rho^N} \int_{Q_\rho(x)}
f(y,\xi)\,dy.
\end{aligned}
\end{equation}
Thus, for a.e.~\(x\in\RR^N\) and for all \(\xi\in\Q^d\), \eqref{eq:lebpoint}
holds. Consequently, using the Lipschitz continuity in \eqref{eq:pLip},
we conclude that \eqref{eq:lebpoint}
holds  for a.e.~\(x\in\RR^N\) and for all \(\xi\in\R^d\).
Then,    for a.e.~\(x\in\RR^N\) and for all \(\xi\in\R^d\), we
have that
\begin{equation*}
\begin{aligned}
f(x,\xi) = \lim_{\rho\to0^+} \frac{1}{\rho^N} \int_{Q_\rho(x)}
f(y,\xi)\,dy \geq \limsup_{\rho\to0^+} \frac{M_c(f,\xi, Q_\rho(x))}{\rho^N}.
\end{aligned}
\end{equation*}

By Remark~\ref{rmk:ext}, it remains to prove that
\begin{equation}
\label{eq:leqineq}
\begin{aligned}
f(x,\xi) \leq \liminf_{\rho\to0^+} \frac{M(f,\xi, Q_{\rho}(x))}{\rho^N}
\end{aligned}
\end{equation}
holds   for a.e.~\(x\in\RR^N\) and for all \(\xi\in\R^d\).

Fix \(x\in\R^N\) and \(\xi\in\RR^d\). Given \(\rho>0\),   \eqref{eq:Mcset}
yields
a function  \(u_\rho \in \Ucal(Q_{\rho}(x))\) satisfying
\begin{equation*}
\begin{aligned}
\int_{Q_{\rho}(x)} f(y, \xi + u_\rho(y))\, dy < M(f,\xi, Q_{\rho}(x)
) + \rho^{N+1}.
\end{aligned}
\end{equation*}
Then, to prove \eqref{eq:leqineq},  it suffices to show that
\begin{equation}\label{eq:suffcondr}
\begin{aligned}
f(x,\xi) \leq \liminf_{\rho\to0^+} \frac{1}{\rho^N}\int_{Q_{\rho}(x)}
f(y, \xi + u_\rho(y))\, dy.  
\end{aligned}
\end{equation}

To prove this inequality, we have to compare the values of \(f\)
at two different points, \(x\), \(y\in\R^N\). For this reason,
for \(m\in\NN\) and  
 for \(x,y \in\RR^N\), we define%
\begin{equation}\label{eq:def omegam}
\begin{aligned}
\omega_m(x,y):= \sup_{|\eta|\leq m} |f(y,\eta) - f(x,\eta)|.
\end{aligned}
\end{equation}

We claim that there exists a set \(E\subset \R^N\) with measure
\(0\) such that
for every \(x\in\RR^N\setminus E\) and every \(m\in\NN\), we have
\begin{equation}
\label{eq:omegatoz}
\begin{aligned}
\lim_{\rho\to 0^+} \frac{1}{\rho^N} \int_{Q_\rho(x)
} \omega_m(x,y)\, dy =0.
\end{aligned}
\end{equation}
To prove this claim, we fix \(m\in\NN\) and \(k\in \NN\) and
 find \(n_{m,k}\in\NN\)
and a finite family \((\eta_i^{m,k})_{i=1}^{n_{m,k}}\) in \(
B_m(0)\)
such that
\begin{equation*}
\begin{aligned}
\overline {B_m(0)} \subset \bigcup_{i=1}^{n_{m,k}} B_{\frac1k}(\eta_i^{m,k}).
\end{aligned}
\end{equation*}
Then, for \(\eta\in B_{\frac1k}(\eta_i^{m,k})\), we have
\begin{equation*}
\begin{aligned}
|f(y,\eta) - f(x,\eta) & \leq |f(y, \eta_i^{m,k}) - f (x , \eta_i^{m,k})|
+ |f(y, \eta) - f(y, \eta_i^{m,k})| + | f(x, \eta) - f(x, \eta_i^{m,k})|\\
& \leq  |f(y, \eta_i^{m,k}) - f (x , \eta_i^{m,k})| + C(1+
m^{p-1})\tfrac1k,
\end{aligned}
\end{equation*}
where we used \eqref{eq:pgrowth} and \eqref{eq:pLip}, and \(C\) depends
on \(c_0\) and \(c_1\). Consequently,
\begin{equation*}
\begin{aligned}
\omega_m(x,y) \leq \sup_{1\leq i\leq n_{m,k}}   |f(y, \eta_i^{m,k})
- f (x , \eta_i^{m,k})| + C(1+
m^{p-1})\tfrac1k, 
\end{aligned}
\end{equation*}
which gives
\begin{equation*}
\begin{aligned}
 \frac{1}{\rho^N} \int_{Q_\rho(x)
} \omega_m(x,y)\, dy \leq \sum_{i=1}^{n_{m,k}}  \frac{1}{\rho^N}
\int_{Q_\rho(x)
} |f(y, \eta_i^{m,k})
- f (x , \eta_i^{m,k})|\, dy + C(1+
m^{p-1})\frac1k.
\end{aligned}
\end{equation*}
Hence, considering the Lebesgue points \(x\) for all functions
\(f(\cdot, \eta_i^{m,k})\) with \(m\in\NN\), \(k\in\NN\), and
\(i\in\{1,...,n_{m,k}\}\), we find a set 
\(E\subset\R^N\) with measure \(0\) such that for every \(x\in\R^N\setminus
E\),  \(m\in\NN\), and \(k\in\NN\) we have
\begin{equation*}
\begin{aligned}
\lim_{\rho\to 0^+} \frac{1}{\rho^N} \int_{Q_\rho(x)
} \omega_m(x,y)\, dy \leq C(1+
m^{p-1})\frac1k,
\end{aligned}
\end{equation*}
from which \eqref{eq:omegatoz} follows by taking the limit \(k\to\infty\).
Since \(f\in \Fcal_{\qc}\) 
for every \(x\in \R^N\setminus E\) the function \(\xi\mapsto
f(x,\xi)\) is \(\Acal\)-quasiconvex.

We now fix \(x\in \R^N\setminus E\), \(\xi \in \R^d\), and a
sequence \(\rho_j\to 0^+\) such that
\begin{equation}\label{lim j = liminf rho}
\lim_{j\to\infty} \frac{1}{\rho_j^N}\int_{Q_{\rho_j}(x)}f(y,
\xi + u_{\rho_j}(y))\, dy = \liminf_{\rho\to0^+} \frac{1}{\rho^N}\int_{Q_{\rho}(x)}f(y,
\xi + u_\rho(y))\, dy.
\end{equation}
By \eqref{eq:pgrowth},
\begin{equation*}
\begin{aligned}
\int_{Q_{\rho_j}(x)} \Big(\frac1c_0 |\xi + u_{\rho_j}(y)|^p -
c_0 \Big)\,
dy &\leq
\int_{Q_{\rho_j}(x)} f(y, \xi + u_{\rho_j}(y))\, dy < M(f,\xi,
{Q_{\rho_j}(x)})
+ \rho_j^{N+1}\\
&\leq c_0 (1+|\xi|^p)\rho_j^N + \rho_j^{N+1},
\end{aligned}
\end{equation*}
which gives
\begin{equation}\label{eq:unifrhoc}
\begin{aligned}
\frac{1}{\rho_j^N}\int_{Q_{\rho_j}(x)} |\xi + u_{\rho_j}(y)|^p\,
dy \leq
C_\xi
\end{aligned}
\end{equation}
for some positive constant \(C_\xi\), independent of  \(j
\). Then, setting
\begin{equation*}
\begin{aligned}
v_j(z) := u_{\rho_j}(x+\rho_j z) \quad \text{ for } z\in Q,
\end{aligned}
\end{equation*}
with \(Q\) the unit cube centered at the origin, 
 we have that \(v_j \in \Ucal(Q)\) with \(\int_{Q} | \xi+
v_j(z)|^p\, dz \leq C_\xi\) by \eqref{eq:unifrhoc}. Then, we
can use
\cite[Lemma~3.1]{BrFoLe00} to find a \(p\)-equi-integrable
sequence \((\tilde v_j)_{j\in\NN}\subset \Ucal(Q)\)
 such
that
\begin{equation*}
\begin{aligned}
\limsup_{j\to\infty} \int_Q f(x+\rho_j z, \xi + \tilde v_j(z))\,
dz \leq \limsup_{j\to\infty} \int_Q f(x+\rho_j z, \xi +  v_j(z))\,
dz.
\end{aligned}
\end{equation*}

Next, we set \(\tilde u_j(y):= \tilde v_j(\frac{y-x}{\rho_j})\),
and
we observe that the preceding inequality becomes 
\begin{equation*}
\limsup_{j\to\infty} \frac{1}{\rho_j^N} \int_{Q_{\rho_j}(x)}
f(y, \xi + \tilde u_j(y)) dy 
\leq \lim_{j\to\infty} \frac{1}{\rho_j^N} \int_{Q_{\rho_j}(x)}
f(y, \xi +  u_{\rho_j}(y)) dy.
\end{equation*}
This inequality, together with \eqref{lim j = liminf rho}, implies
that
\eqref{eq:suffcondr} is a consequence of the  inequality
\begin{equation}\label{eq:suffcondrrr}
\begin{aligned}
f(x,\xi) \leq \liminf_{j\to\infty} \frac{1}{\rho_j^N}\int_{Q_{\rho_j}(x)}
f(y, \xi
+ \tilde u_j(y))\, dy,  
\end{aligned}
\end{equation}
which we establish next.
Observing that  \((\tilde u_j)_{j\in\NN}\subset \Ucal(Q_{\rho_j}(x))
\), we have
 by the definition of \(\Acal\)-quasiconvexity
(Definition \ref{def:Aqcx}) 
that%
\begin{equation}
\label{eq:byAqcxty}
\begin{aligned}
f(x,\xi) \leq \frac{1}{\rho_j^N} \int_{Q_{\rho_j}(x) } f(x, \xi
+
\tilde u_j(y))\,
dy \quad \text{ for all  } j\in\NN. 
\end{aligned}
\end{equation}
To prove \eqref{eq:suffcondrrr}, 
 we compare  the  integrals
\begin{equation*}
\begin{aligned}
\frac{1}{\rho_j^N} \int_{Q_{\rho_j}(x) } f(x, \xi + \tilde u_j(y))\,
dy \quad \text{ and } \quad \frac{1}{\rho_j^N} \int_{Q_{\rho_j}(x)
} f(y, \xi + \tilde u_j(y))\,
dy.
\end{aligned}
\end{equation*}

To this aim, for \(m\in \NN\), we set
\begin{equation*}
\begin{aligned}
\hat Q_j^m(x):= \big\{ y \in Q_{\rho_j}(x): |\xi+\tilde u_j(y)|\leq
m\big \} \quad
\text{ and } \quad \check Q_j^m(x):= \big\{ y \in Q_{\rho_j}(x):
|\xi+\tilde
u_j(y)|>
m\big \} ,
\end{aligned}
\end{equation*}
and observe that \eqref{eq:byAqcxty} and
  \eqref{eq:def omegam}  yield
\begin{equation*}
\begin{aligned}
f(x,\xi)&\leq  \frac{1}{\rho_j^N}\int_{\hat Q_j^m(x)}
f(x, \xi
+ \tilde u_j(y))\, dy + \frac{1}{\rho_j^N}\int_{\check Q_j^m(x)}
f(x, \xi
+ \tilde u_j(y))\, dy \\
&\leq \frac{1}{\rho_j^N}\int_{Q_{\rho_j}(x)}
f(y, \xi
+ \tilde u_j(y))\, dy +\frac{1}{\rho_j^N}\int_{Q_{\rho_j}(x)}
\omega_m(x,y) \, dy  + \frac{1}{\rho_j^N}\int_{\check Q_j^m(x)}
f(x, \xi
+ \tilde u_j(y))\, dy.
\end{aligned}
\end{equation*}
Passing to the limit as \(j\to\infty\), 
we obtain
from \eqref{eq:omegatoz} that\begin{equation*}
f(x,\xi)\leq \liminf_{j\to\infty}  \frac{1}{\rho_j^N}\int_{Q_j(x)}
f(y, \xi
+ \tilde u_j(y))\, dy + \limsup_{j\to\infty} \frac{1}{\rho_j^N}\int_{
\check Q_j^m(x)}
f(x, \xi
+ \tilde u_j(y))\, dy.
\end{equation*}

To conclude the proof of \eqref{eq:suffcondrrr}, it is enough
to show that
\begin{equation*}
\begin{aligned}
\limsup_{j\to\infty} \frac{1}{\rho_j^N}\int_{
\check Q_j^m(x)}
f(x, \xi
+ \tilde u_j(y))\, dy \leq  \lambda_m\quad \text{with } \lim_{m\to\infty}
\lambda_m
=0.
\end{aligned}
\end{equation*}
The preceding estimate is a consequence of
the  \(p\)-equi-integrability of the sequence \((\tilde v_j)_{j\in\NN}\)
in \(Q\) together with the inequality
\begin{equation*}
\begin{aligned}
\frac{1}{\rho_j^N}\int_{\check Q_j^m(x)}
f(x, \xi
+ \tilde u_j(y))\, dy \leq \frac{c_0}{\rho_j^N} \int_{\check
Q_j^m(x)}
(1+|\xi
+ \tilde u_j(y)|^p)\, dy = c_0\int_{\check Q_j^m} \big( 1+ |\xi
+ \tilde v_j(z)|^p\big)\, dz,
\end{aligned}
\end{equation*}
where \(\check Q_j^m:= \{ z\in Q: |\tilde v_j(z)|>m\} \), with \(|\check Q_j^m|\to0\) as \(m\to\infty\).    
\end{proof}

\section{The \(\Gamma\)-limit obtained from minimum values on
small
cubes}

In this section, we prove that the integrand of the \(\Gamma\)-limit
of a sequence \((F_k)_{k\in\NN}\) of functionals in \(\Ical\)
can be obtained by talking the limit, first as \(k\to\infty\)
and then as \(\rho\to 0^+\), of the infima of some minimum problems
for \(F_k\) on the cubes \(Q_\rho(x)\), see \eqref{eq:f from
M}.

We begin by proving that if  \(F_k(\cdot,D)\) \(\Gamma\)-converges
to  \(F(\cdot,D)\), then the corresponding infima introduced
in 
\eqref{eq:Mcset} satisfy some inequalities.

\begin{proposition}\label{M and Mc}
Let \((f_k)_{k\in\NN}\) be a sequence in \(\Fcal_{\lip}\), let \(f\in
\Fcal\), and let \(F_k\) and \(F\) be the corresponding functionals
in \(\Ical\) defined by
\eqref{eq:Ffree}.  Assume that for every \(D\in\Ocal(\RR^N)\),
the sequence  \((F_k(\cdot,D))_{k\in\NN}\)
\(\Gamma\)-converges to
 \(F(\cdot,D)\) with respect to the topology induced by \(\Vert\cdot\Vert_{D}^{\Acal}\)
on \(L^p(D;\RR^d)\).  Let \(Q\subset\RR^N\)
be an open cube and let \(\xi\in\RR^d\). 
Then, 
\begin{align}
&\displaystyle \limsup_{k\to\infty}M(f_{k},\xi,Q)\leq M_c(f,\xi,Q),
\label{limsup cube}
\\
&\displaystyle \liminf_{k\to\infty}M(f_{k},\xi,Q)\geq M(f,\xi,Q).
\label{liminf cube}
\end{align}
\end{proposition}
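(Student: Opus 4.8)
The plan is to prove the two inequalities separately; both exploit the $\Gamma$-convergence hypothesis together with the modification procedure encoded in Lemmas~\ref{lem:equiint} and~\ref{lem:pequiint}.

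For \eqref{limsup cube} I would fix $u\in\Ucal_c(Q)$ and note that, by Remark~\ref{rmk:ext}, its $Q$-periodic extension $\tilde u$ lies in $\Ucal(Q)$, so $\tilde u$ is $Q$-periodic with $\Acal\tilde u=0$ in $\R^N$ and $F(\xi+\tilde u,Q)=F(\xi+u,Q)$. Using the $\Gamma(\|\cdot\|_Q^\Acal)$-convergence of $(F_k(\cdot,Q))_k$ to $F(\cdot,Q)$, I would take a recovery sequence $(w_k)_k$ with $\|w_k-(\xi+\tilde u)\|_Q^\Acal\to 0$ and $\limsup_k F_k(w_k,Q)\le F(\xi+\tilde u,Q)$; by \eqref{eq:pgrowth F} and Remark~\ref{weak Lp versus W-1p} this forces $w_k\weakly\xi+\tilde u$ weakly in $L^p(Q;\R^d)$ and $\Acal_Q w_k\to 0$ in $W^{-1,p}(Q;\R^l)$. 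Since $\tilde u$ is $Q$-periodic and $\Acal$-free on $\R^N$, the cube version of Lemma~\ref{lem:equiint} then produces a $Q$-periodic, $\Acal$-free sequence $(\hat w_k)_k$ with $\int_Q\hat w_k\,dx=\int_Q(\xi+\tilde u)\,dx=\xi|Q|$, $\hat w_k\weakly\xi+\tilde u$ weakly in $L^p(Q;\R^d)$, and $\|w_k-\hat w_k\|_{L^r(Q;\R^d)}\to 0$ for $1\le r<p$; consequently $\hat w_k-\xi\in\Ucal(Q)$ is admissible for $M(f_k,\xi,Q)$. Applying Lemma~\ref{lem:pequiint} to $D=Q$, limit $\xi+\tilde u$, and the pair $(w_k),(\hat w_k)$ (with the sequence $(F_k)$), one gets $\limsup_k\big(F_k(\hat w_k,Q)-F_k(w_k,Q)\big)\le 0$, hence $\limsup_k M(f_k,\xi,Q)\le\limsup_k F_k(\hat w_k,Q)\le F(\xi+u,Q)$; taking the infimum over $u\in\Ucal_c(Q)$ yields \eqref{limsup cube}.

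For \eqref{liminf cube} I would pass to a subsequence $(k_j)$ realizing $\liminf_k M(f_k,\xi,Q)$ (which is finite, since $M(f_k,\xi,Q)\le F_k(\xi,Q)\le c_0(1+|\xi|^p)|Q|$ by \eqref{eq:pgrowth F}), and for each $j$ choose a near-optimal $u_j\in\Ucal(Q)$ with $F_{k_j}(\xi+u_j,Q)\le M(f_{k_j},\xi,Q)+\tfrac1j$. The lower bound in \eqref{eq:pgrowth F} makes $(u_j)$ bounded in $L^p(Q;\R^d)$, hence, by $Q$-periodicity, bounded in $L^p(D;\R^d)$ for every $D\in\Ocal(\R^N)$; a diagonal extraction produces $u_j\weakly u$ locally weakly in $L^p(\R^N;\R^d)$. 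Passing to the limit in the mean-zero and periodicity conditions, and in the $\Acal$-free condition by testing \eqref{eq:Au on RN} against $\psi\in C^\infty_c(\R^N;\R^l)$ and using the local weak $L^p$-convergence, one finds $u\in\Ucal(Q)$, whence $M(f,\xi,Q)\le F(\xi+u,Q)$. Since $\Acal_Q u_j=\Acal_Q u=0$ and the embedding $L^p(Q;\R^d)\embed W^{-1,p}(Q;\R^d)$ is compact, $\xi+u_j\to\xi+u$ in $\|\cdot\|_Q^\Acal$, so the liminf inequality of $\Gamma$-convergence gives $F(\xi+u,Q)\le\liminf_j F_{k_j}(\xi+u_j,Q)\le\liminf_j\big(M(f_{k_j},\xi,Q)+\tfrac1j\big)=\liminf_k M(f_k,\xi,Q)$, which combined with the previous inequality proves \eqref{liminf cube}.

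I expect the main obstacle to be \eqref{limsup cube}: a recovery sequence for the $\Gamma$-limit is only asymptotically $\Acal$-free, so it is not directly admissible in the minimum problem defining $M(f_k,\xi,Q)$. Restoring admissibility without inflating the limiting energy is exactly what the pairing of Lemma~\ref{lem:equiint} (in its periodic form — which is also why it is natural to phrase the right-hand side with $\Ucal_c(Q)$, whose elements extend $Q$-periodically while remaining $\Acal$-free on all of $\R^N$) with the energy-stability Lemma~\ref{lem:pequiint} accomplishes; the remaining manipulations are routine bookkeeping with the norms $\|\cdot\|_Q^\Acal$ and the periodicity.
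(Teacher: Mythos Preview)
Your proposal is correct and follows essentially the same approach as the paper: for \eqref{limsup cube} you take a recovery sequence for a competitor in $\Ucal_c(Q)$ and restore periodic $\Acal$-freeness via the cube version of Lemma~\ref{lem:equiint} together with Lemma~\ref{lem:pequiint}, while for \eqref{liminf cube} you take near-minimizers, extract a weak limit in $\Ucal(Q)$, and invoke the $\Gamma$-liminf inequality. The only differences from the paper's write-up are cosmetic (you shift by $\xi$ before taking the recovery sequence, and you explicitly pass to a subsequence realizing the liminf).
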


\begin{proof}
Let \(\delta>0\). By \eqref{eq:Mcset}, there exists \(u\in L^{p}(Q;\R^{N})\),
with  \(\supp u \subset\subset Q\), \(\int_{Q}u\,dx=0\), and \(\Acal_Q
u=0\), such that
\begin{equation*}
F(\xi+u,Q)<M_c(f,\xi,Q)+\delta.
\end{equation*}
By \(\Gamma\)-convergence, there exists a sequence \((u_k)_{k\in\NN}\) in
\(L^{p}(Q,\R^{N})\) such that  \(u_k\to u\) in \(W^{-1,p}(Q,\R^{N})\),
 \(\Acal_Q u_k\to \Acal_Q u=0\) in \(W^{-1,p}(Q,\R^{l})\),
and
\begin{equation}\label{lim from Gamma}
\lim_{k\to\infty}F_k(\xi+u_k,Q)=F(\xi+u,Q)<M_c(f,\xi,Q)+\delta<+\infty.
\end{equation}
By \eqref{eq:pgrowth}, this implies that  \((u_k)_{k\in\NN}\) is bounded in
\(L^{p}(Q,\R^{N})\); hence,  \(u_k\rightharpoonup u\) weakly
in \(L^{p}(Q,\R^{N})\). 
Using Lemmas~\ref{lem:equiint} and~\ref{lem:pequiint}, we can find a \(p\)-equi-integrable
sequence  \(( v_k)_{k\in\NN}\subset L^p_{per}(Q;\RR^d)  \) satisfying
\begin{equation*}
 v_k \weakly  u  \text{ in \(L^p(Q;\RR^d) \)}, \quad \Acal
v_k =0 \hbox{ in }\R^{N}, \quad \int_Q v_k\, dx = \int_Q
u\,dx=0,
\end{equation*}
and
\begin{equation}
\label{eq:newseq2per1}
\limsup_{k\to\infty} F_k(\xi+v_k,Q) \leq \limsup_{k\to\infty}
F_k(\xi+u_k,Q).
\end{equation}
 By \eqref{eq:Mcset}, we have \(M(f_k,\xi,Q)\le F_k(\xi+v_k,Q)\).
This inequality, together with 
 \eqref{lim from Gamma} and \eqref{eq:newseq2per1}, yields
\begin{equation*}
\limsup_{k\to\infty}M(f_{k},\xi,Q)\leq M_c(f,\xi,Q) + \delta.
\end{equation*}
Since $\delta>0$ is arbitrary, we obtain \eqref{limsup cube}.

To prove \eqref{liminf cube}, we choose \(u_k
\in L^{p}_{\loc}(\RR^N;\RR^{d})\) for every \(k\), with \(u_k\)
\(Q\)-periodic, \(\int_{Q}u_k \,dx=0\), and \(\Acal u_k=0\) in
\(\RR^N\), such that
\begin{equation}\label{Fkuk<M+1/k}
F_k(\xi+u_k,Q)<M(f_{k},\xi,Q) + \tfrac1k.
\end{equation}
By \eqref{limsup cube}, the right-hand side of the previous inequality
is bounded; 
hence, a subsequence of \((u_k)_{k\in\NN}\), 
not relabeled, converges to some function \(u\) weakly in \(L^{p}(Q;\RR^{d})\).

Since all functions \(u_k\) are \(Q\)-periodic, the function
\(u\) can be extended to a 
\(Q\)-periodic function, still denoted by \(u\). 
Because the embedding of \(L^{p}(Q;\RR^{d})\) into \(W^{-1,p}(Q;\RR^{d})\)
is compact 
and \(\Acal u_k=0\) for every \(k\), we deduce that \((u_k)_{k\in\NN}\) converges
to \(u\) in
the
topology induced by \(\Vert\cdot\Vert_D^\Acal\) on \(L^p(D;\RR^d)\)
and that \(\Acal u=0\) in \(\R^{N}\). 
Therefore, by \(\Gamma\)-convergence and by \eqref{eq:Mcset}, we
have
\begin{equation*}
M(f,\xi,Q)\le F(\xi+u,Q)\le \liminf_{k\to\infty}F_k(\xi+u_k,Q).
\end{equation*}
Together with \eqref{Fkuk<M+1/k}, the preceding estimate gives \eqref{liminf cube}.
\end{proof}

\begin{theorem}\label{thm:f from M}
Let \((f_{k})_{k\in\NN}\) be a sequence in \(\Fcal_{\lip}\), let \((F_k)_{k\in\NN}\)
 be the corresponding sequence of 
functionals in \(\Ical\) defined by
\eqref{eq:Ffree}, and let \((F_k^{\Acal})_{k\in\NN}\)  be the
sequence obtained as in Definition \ref{FAcal}.  Suppose that
\begin{itemize}
\item[(a)] there exists  \(f\colon\R^N\times\R^d\to[0,+\infty)\)
such that
\begin{equation}\label{eq:f from M}
f(x,\xi)=\limsup_{\rho\to 0^+} \, \liminf_{k\to\infty} \frac{M(f_k,\xi,Q_\rho(x))}{\rho^N}
=\limsup_{\rho\to 0^+} \, \limsup_{k\to\infty} \frac{M(f_k,\xi,Q_\rho(x))}{\rho^N}
\end{equation}
for a.e.\ \(x\in \R^N\) and every \(\xi\in \R^d\).
\end{itemize}
Then,  there exists \(\hat f\in\Fcal_{\lip}\cap\Fcal_{\qc}\) such that \(f(x,\xi)=\hat f(x,\xi)\)  for a.e.\ \(x\in \R^N\)
and every \(\xi\in \R^d\),  and the functionals  \(F\) and  \(F^{\Acal}\)
introduced in \eqref{eq:Ffree} and 
 Definition \ref{FAcal} satisfy the following properties:
\begin{itemize}
\item[(b)] for every \(D\in\Ocal(\RR^N)\), the sequence  \((F_k(\cdot,D))_{k\in
\NN}\)
\(\Gamma\)-converges to
 \(F(\cdot,D)\) with respect to the topology induced by \(\Vert\cdot\Vert_{D}^{\Acal}\)
on \(L^p(D;\RR^d)\);
\item[(c)]
for every \(D\in\Ocal(\R^N)\), the sequence \((F_k^{\Acal}(\cdot,D))_{k\in\NN}\)
\(\Gamma\)-converges to \(F^{\Acal}(\cdot,D)\) in \(\ker\Acal_D\)
with respect to
the weak topology of \(L^p(D;\RR^d)\).
\end{itemize}

Conversely, if \(f\in\Fcal\) and the functionals  \(F\) and 
\(F^{\Acal}\) introduced in \eqref{eq:Ffree} and 
Definition \ref{FAcal} satisfy (b) or (c), then \(f\) satisfies
(a).
\end{theorem}

\begin{proof}  Since \(F_k\in\Fcal_{\lip}\) for every \(k\in\NN\) by Remark \ref{rem:Fcal gives Ffrac},  the equivalence between (b) and (c) is proved in
Theorem \ref{main constrained}.
If these conditions are satisfied, then by Corollary \ref{cor:UCmain}
 there exists \(\hat f\in\Fcal_{\lip}\cap \Fcal_{\qc}\) such that 
\begin{equation*}
F(u,D):=\int_D \hat f(x, u(x))\, dx, \quad\text{for every }D\in\Ocal(\R^N)\text{ and }u\in L^p(D;\R^d),
\end{equation*}
which implies
\(f(x,\xi)=\hat f(x,\xi)\)  for a.e.\ \(x\in \R^N\)
and every \(\xi\in \R^d\). 

Next, we assume that (b) holds, and we show that
\(\hat f\) satisfies \eqref{eq:f from M}. Fix \(x\in\R^N\) and \(\xi\in \R^d\). By Proposition
\ref{M and Mc}   for every \(\rho>0\)  we have  that
\begin{align*}
&\displaystyle \limsup_{k\to\infty}M(f_{k},\xi,Q_\rho(x))\leq
M_c(f,\xi,Q_\rho(x)),
\\
&\displaystyle M(f,\xi,Q_\rho(x))\le \liminf_{k\to\infty}M(f_{k},\xi,Q_\rho(x)).
\end{align*}
By Theorem \ref{thm:formula}, we obtain \eqref{eq:f from M}  with \(\hat f\)  for
a.e.\ \(x\in \R^N\) and every \(\xi\in \R^d\), 
concluding the proof of (a).

Finally, we assume that (a) holds, and we prove that  (b) is also satisfied. By Corollary \ref{cor:UCmain},  there exists a subsequence
\((f_{k_j})_{j\in\NN}\) and a function \(\hat f\in\Fcal_{\lip}\cap\Fcal_{\qc}\)
 such that  for every \(D\in\Ocal(\RR^N)\) the sequence  \((F_{k_j}(\cdot,D))_{j\in\NN}\)
\(\Gamma\)-converges to
 \(\widehat F(\cdot,D)\) with respect to the topology induced
by \(\Vert\cdot\Vert_{D}^{\Acal}\) on \(L^p(D;\RR^d)\),
 where \(\widehat F\) is the functional associated with \(\hat
f\) as in \eqref{eq:Ffree}.
Since (b)\(\Rightarrow\)(a) by the preceding part of the proof,
we have  for a.e.\ \(x\in \R^N\)
and every \(\xi\in \R^d\) that 
\begin{equation*}
\hat f(x,\xi)=\limsup_{\rho\to 0^+} \, \liminf_{j\to\infty} \frac{M(f_{k_j},\xi,Q_\rho(x))}{\rho^N}
=\limsup_{\rho\to 0^+} \, \limsup_{j\to\infty} \frac{M(f_{k_j},\xi,Q_\rho(x))}{\rho^N}.
\end{equation*}
By \eqref{eq:f from M}, this implies that \(\hat f(x,\xi)= f(x,\xi)\)
for a.e.\ \(x\in \R^N\) and every \(\xi\in \R^d\); hence, 
 \(\widehat F=F\).
Since the  \(\Gamma\)-limit does not depend on the subsequence,
we obtain (b) by the Urysohn property of \(\Gamma\)-convergence (see \cite[Proposition
8.3]{DM93}).
\end{proof}

The result of the previous theorem cannot be applied directly
to the study of stochastic homogenization by means of the subadditive
ergodic theorem \cite[Theorem~2.7]{AkKr81} (see also \cite{DaMo86} and \cite{LiMi02})   because the term \(M(f,\xi,Q)\) is not subadditive;
that is, we do not know if 
\begin{equation}\label{subadd cubes}
M(f,\xi,Q)\le \sum_{i\in I}M(f,\xi,Q_i)
\end{equation}
when \((Q_i)_{i\in I}\) is a finite decomposition of \(Q\) into
disjoint cubes. We now introduce a variant of \(M(f,\xi,Q)\)
that satisfies this property. The idea is to relax the constraint
\(\Acal u=0\) that was used in the definition of \(M(f,\xi,Q)\).
We begin with a technical result that will be useful to impose
a constraint on the norm \(\|\Acal_D u\|_{W^{-1,p}(D;\R^l)}\)
depending additively on \(D\).

\begin{remark}\label{rem:Ueta}
For every \(D\in\Ocal(\R^{N})\) and  \(u\in L^{p}(D;\RR^d)\),
there exists 
 \( V \in L^{p}(D;\RR^{l\times N})\) such that
 \begin{equation}\label{Au divU}
 \langle \widetilde \Acal_{D} u,\psi \rangle = 
 \int_{D}  V\cdot \nabla \psi\,dx
\quad\text{for every }\psi\in W^{1,q}(D;\RR^{l}),
 \end{equation}
 where \(\cdot\) denotes the Euclidean scalar product between
matrices.  By \eqref{eq:widetildeAu}, this equality is satisfied,
for instance, when for every \(x\in D\) and \(i=1,\dots,N\), the
\(i\)-th column of the matrix 
\( V(x)\) is given by the vector \(A^{i}u(x)\). If \eqref{Au divU}
holds, then
\begin{equation}\label{easy tilde W estimate}
 \Vert \widetilde\Acal_{D} u\Vert_{\widetilde W^{-1,p}(D;\RR^{l})}
  \leq \Vert  V\Vert_{L^p(D;\RR^{l\times N})}.
\end{equation}
\end{remark}

The following lemma provides a partial converse to inequality
\eqref{easy tilde W estimate}, which will be used later in the proof of Proposition \ref{M and Mceta}.
\begin{lemma}\label{lem:existence U}
Let  \(D\subset\R^{N}\) be a bounded connected open set with Lipschitz boundary,  and let \(K\subset D\) be a compact set. Then, there exists a constant \(C_{K,D}>0\) 
such that for every \(U \in L^{p}(D;\RR^{l\times N})\), with \(\supp U\subset K\), there exists  \(V \in L^{p}(D;\RR^{l\times N})\) satisfying
\begin{align}
&\int_{D}V\cdot\nabla\psi\,dx=\int_{D}U\cdot\nabla\psi\,dx \quad\hbox{for every }\psi\in W^{1,q}(D,\R^{l}),
\label{U and V}\\
&\Vert V\Vert_{L^p(D;\RR^{l\times N})}\leq C_{K,D} \Vert  \diverg U \Vert_{W^{-1,p}(D;\RR^{l})}.
\label{estimate U in terms of V}
\end{align}
\end{lemma}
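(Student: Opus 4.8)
The plan is to reduce the estimate \eqref{estimate U in terms of V} to a functional-analytic statement about the divergence operator on $D$, using the compact support of $U$ together with a De Rham / Bogovskiĭ-type argument. The point is that $U$ itself does not satisfy \eqref{estimate U in terms of V} (its $L^p$-norm can be huge while $\diverg U$ is small in $W^{-1,p}$), so we must genuinely replace $U$ by a different field $V$ with the same action on gradients.

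\textbf{Step 1 (Reformulation).} The condition \eqref{U and V} says precisely that $\diverg(V-U)=0$ in the sense of distributions on $D$ \emph{and} that $V-U$ has vanishing normal trace on $\partial D$; equivalently, $V$ and $U$ induce the same element of $\widetilde W^{-1,p}(D;\R^l)$ via $\psi\mapsto\int_D V\cdot\nabla\psi\,dx$. So \eqref{U and V}--\eqref{estimate U in terms of V} amount to: the operator
\[
\widetilde\Acal\colon L^p(D;\R^{l\times N})\big/ \{\,W:\ \textstyle\int_D W\cdot\nabla\psi\,dx=0\ \forall\psi\in W^{1,q}(D;\R^l)\,\}\ \longrightarrow\ \widetilde W^{-1,p}(D;\R^l)
\]
is bounded below on the image of the fields supported in $K$, with the norm on the target replaced by $\|\diverg U\|_{W^{-1,p}(D;\R^l)}$ (duality with $W^{1,q}_0$, not $W^{1,q}$). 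Since $\supp U\subset K\subset\subset D$, for such $U$ the functional $\psi\mapsto\int_D U\cdot\nabla\psi\,dx$ depends only on $\psi|_{\text{nbhd of }K}$; in particular its restriction to $W^{1,q}_0(D;\R^l)$ already determines it on all of $W^{1,q}(D;\R^l)$ (add a cutoff equal to $1$ near $K$, supported in $D$). This is what lets us use the $W^{-1,p}$-norm on the right-hand side.

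\textbf{Step 2 (Solve a divergence equation).} Fix an auxiliary bounded Lipschitz domain $D'$ with $K\subset\subset D'\subset\subset D$. Let $g:=\diverg U\in W^{-1,p}(D;\R^l)$, viewed componentwise; it is supported in $K$, hence also defines an element of $W^{-1,p}(D';\R^l)$, and $\langle g,1\rangle=\langle\diverg U,1\rangle=0$ componentwise because $U$ has compact support. By the classical solvability of $\diverg V_0 = g$ with $L^p$-bounds on domains with Lipschitz boundary (the Bogovskiĭ operator / Nečas inequality; see e.g. the references on the div equation — alternatively one can quote the surjectivity of $\diverg:L^p_0\to W^{-1,p}$ with a bounded right inverse), there exists $V_0\in L^p(D';\R^{l\times N})$ with $\diverg V_0=g$ in $\mathcal D'(D')$ and
\[
\|V_0\|_{L^p(D';\R^{l\times N})}\le C_{D'}\,\|g\|_{W^{-1,p}(D';\R^l)}\le C_{D'}\,\|\diverg U\|_{W^{-1,p}(D;\R^l)}.
\]
Now pick $\varphi\in C^\infty_c(D')$ with $\varphi\equiv 1$ on $K$ and set $V:=\varphi V_0 + (\nabla\varphi)\otimes(\text{primitive term})$ — more cleanly: set $V:=\varphi V_0 - R$, where $R$ is chosen so that $\diverg V = \varphi g + V_0\cdot\nabla\varphi - \diverg R = g$ near $K$ and $\diverg V=0$ away from $K$; one solves the auxiliary equation $\diverg R = V_0\cdot\nabla\varphi$ (whose right side is in $L^p$, supported in $D'\setminus K$, with zero mean) by Bogovskiĭ again, losing only a constant depending on $\varphi$, hence on $K$ and $D$. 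Extend $V$ by $0$ to $D$. Then $V\in L^p(D;\R^{l\times N})$, $\supp V\subset\subset D$, $\diverg V=\diverg U$ in $\mathcal D'(D)$, and $\|V\|_{L^p(D)}\le C_{K,D}\|\diverg U\|_{W^{-1,p}(D)}$.

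\textbf{Step 3 (Check \eqref{U and V}).} Since $V-U$ has compact support in $D$ and $\diverg(V-U)=0$ in $\mathcal D'(D)$, for any $\psi\in W^{1,q}(D;\R^l)$ we may write $\int_D(V-U)\cdot\nabla\psi\,dx=\int_D(V-U)\cdot\nabla(\chi\psi)\,dx$ with $\chi\in C^\infty_c(D)$, $\chi\equiv1$ on $\supp(V-U)$; and $\chi\psi\in W^{1,q}_0(D;\R^l)$, so this pairing is $-\langle\diverg(V-U),\chi\psi\rangle=0$. Hence \eqref{U and V} holds, and \eqref{estimate U in terms of V} is Step 2. $\blacksquare$

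\textbf{The main obstacle} is Step 2: one must invoke, with the correct $L^p$-bounds, the solvability of the divergence equation with prescribed right-hand side in $W^{-1,p}$ and zero mean on a Lipschitz domain, and then localize it (via cutoffs and a second application to absorb the commutator term $V_0\cdot\nabla\varphi$) so that the resulting $V$ stays compactly supported in $D$ while keeping the bound in terms of $\|\diverg U\|_{W^{-1,p}(D;\R^l)}$ — crucially with the $W^{1,q}_0$-duality norm on the right, which is legitimate only because $\supp U\subset K\subset\subset D$. Connectedness and the Lipschitz boundary of $D$ enter exactly to guarantee the scalar divergence equation is solvable with these bounds on the intermediate domain; everything else is bookkeeping with cutoff functions.
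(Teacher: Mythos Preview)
Your approach is correct but differs from the paper's. The paper solves the $q$-Laplacian: it finds $v\in W^{1,q}_{m}(D;\R^{l})$ (mean zero) with
\[
\int_{D}|\nabla v|^{q-2}\nabla v\cdot\nabla\psi\,dx=\int_{D}U\cdot\nabla\psi\,dx\quad\text{for all }\psi\in W^{1,q}(D;\R^{l}),
\]
by the monotone-operator theorem of Hartman--Stampacchia, and sets $V:=|\nabla v|^{q-2}\nabla v$, so that \eqref{U and V} holds by construction. For the estimate, the paper tests with $\psi=v$, writes $\int_D U\cdot\nabla v=\int_D U\cdot\nabla(\omega v)=-\langle\diverg U,\omega v\rangle$ for a cutoff $\omega\in C^\infty_c(D)$ with $\omega\equiv 1$ on $K$, and then bounds $\|\omega v\|_{W^{1,q}_0}$ by $C_{D,\omega}\|\nabla v\|_{L^q}$ via Poincar\'e--Wirtinger (here connectedness of $D$ enters). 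Since $\|V\|_{L^p}=\|\nabla v\|_{L^q}^{q-1}$, this yields \eqref{estimate U in terms of V} directly.

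Your route---Hahn--Banach/Ne\v{c}as to produce a preliminary $V_0\in L^p(D')$ with $\diverg V_0=g$, then a cutoff plus a Bogovski\u{\i} correction $R$ to absorb the commutator $V_0\cdot\nabla\varphi$---is more ``linear'' and has the incidental bonus that the resulting $V$ is compactly supported in $D$; in fact your argument never uses the Lipschitz regularity or connectedness of $D$ itself, only of the auxiliary $D'$ you are free to choose. The price is that you invoke Bogovski\u{\i} (a heavier tool) and must track two separate constructions plus the Step~3 verification, whereas the paper's $q$-Laplacian construction is a single self-contained step requiring only Poincar\'e--Wirtinger. Two minor remarks on your write-up: the sentence $\langle g,1\rangle=0$ is not well posed since $1\notin W^{1,q}_0(D')$, but it is also unnecessary for the first step (with the paper's norm $\|\psi\|_{W^{1,q}_0}=\|\nabla\psi\|_{L^q}$, every element of $W^{-1,p}(D')$ is of the form $-\diverg V_0$ with $\|V_0\|_{L^p}=\|g\|_{W^{-1,p}}$, no compatibility needed); and you should state explicitly that Bogovski\u{\i} for $R$ is applied on $D'$, with the zero-mean condition $\int_{D'}V_0\cdot\nabla\varphi\,dx=-\langle g,\varphi\rangle=\int_{D'}U\cdot\nabla\varphi\,dx=0$ following from $\nabla\varphi=0$ on $K\supset\supp U$.
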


\begin{proof} 
        Fix  \(D\subset\R^{N}\), \(K\subset D\), and \(U \in L^{p}(D;\RR^{l\times N})\) as in the statement. 
        Let \(W^{1,q}_{m}(D;\R^{l}):=\{v\in W^{1,q}(D;\R^{l})\colon  \int_{D}v\,dx=0\}\), with the norm induced by \(W^{1,q}(D;\R^{l})\), let
\(\widetilde W^{1,p}_{m}(D;\R^{l})\) be the dual space of  \(W^{1,q}_{m}(D;\R^{l})\), endowed with the dual norm, let \(T\colon 
W^{1,q}_{m}(D;\R^{l})\to \widetilde W^{1,p}_{m}(D;\R^{l})\) be the monotone operator defined by
\[
\langle T(v),\psi\rangle:=\int_{D}|\nabla v|^{q-2}\nabla v\cdot\nabla \psi\,dx\quad\hbox{for every }v,\psi\in W^{1,q}_{m}(D;\R^{l}),
\]
and let \(\Gcal\in \widetilde W^{1,p}_{m}(D;\R^{l})\) be defined by
\[
\langle \Gcal,\psi\rangle:= \int_{D}U\cdot\nabla\psi\,dx \quad\hbox{for every }\psi\in W^{1,q}_{m}(D;\R^{l}).
\]

By the Hartman--Stampacchia Theorem \cite[Lemma~3.1]{HarSta66}, there exists a unique function \(v\in W^{1,q}_{m}(D;\R^{l})\) such that
\(T(v)=\Gcal\); i.e., 
\begin{equation}\label{q-Laplacian}
\int_{D}|\nabla v|^{q-2}\nabla v\cdot\nabla \psi\,dx=\int_{D}U\cdot\nabla\psi\,dx \quad\hbox{for every }\psi\in W^{1,q}(D;\R^{l}).
\end{equation}
Taking \(\psi=v\) in \eqref{q-Laplacian}, we obtain
\begin{equation}\label{estimate nabla u q}
\int_{D}|\nabla v|^{q}\,dx=\int_{D}U\cdot\nabla v\,dx.
\end{equation}

Let \(\omega\in C^{\infty}_{c}(D)\) with \(\omega=1\) on \(K\). Since  \(\supp U\subset K\),  we get from \eqref{estimate nabla u q} that
\begin{equation}\label{second estimate nabla u q}
\int_{D}|\nabla v|^{q}\,dx=\int_{D}U\cdot\nabla (\omega v)\,dx=-\langle\diverg U, \omega v\rangle
\leq  \Vert  \diverg U \Vert_{W^{-1,p}(D;\RR^{l})}  \Vert \omega v \Vert_{W^{1,q}_{0}(D;\RR^{l})} .
\end{equation}
Recalling the definition of \(\Vert \cdot \Vert_{W^{1,q}_{0}(D;\RR^{l})}\) given at the beginning of Section \ref{sect:notation},  the
Poincar\'e--Wirtinger Inequality yields a constant \(C_{D,\omega}>0\) such that
\[
 \Vert \omega v \Vert_{W^{1,q}_{0}(D;\RR^{l})}\leq  \Vert \omega \Vert_{L^{\infty}(D)}  \Vert \nabla v \Vert_{L^{q}(D;\RR^{l\times N})}
 +\Vert \nabla \omega \Vert_{L^{\infty}(D;\R^{N})}  \Vert v \Vert_{L^{q}(D;\RR^{l})} 
 \leq
 C_{D,\omega} \Vert \nabla v \Vert_{L^{q}(D;\RR^{l\times N})}.
\]
From this inequality and from \eqref{second estimate nabla u q}, we get
\[
 \Vert \nabla v \Vert^{q}_{L^{q}(D;\RR^{l\times N})} = \int_{D}|\nabla v|^{q}dx\leq C_{D,\omega} \Vert  \diverg U \Vert_{W^{-1,p}(D;\RR^{l})}  \Vert \nabla v \Vert_{L^{q}(D;\RR^{l\times N})}.
\]
Hence,
\begin{equation}\label{third estimate nabla u q}
 \Vert \nabla v \Vert^{q-1}_{L^{q}(D;\RR^{l\times N})}\leq C_{D,\omega} \Vert  \diverg U \Vert_{W^{-1,p}(D;\RR^{l})}.
\end{equation}

Let \(V:=|\nabla v|^{q-2}\nabla v\). Equality \eqref{U and V} follows from \eqref{q-Laplacian}. Since  \(|V|^{p}=|\nabla v|^{p(q-1)}=|\nabla v|^{q}\), we have
\(\Vert V \Vert_{L^p(D;\RR^{l\times N})}=\big(\int_{D}|\nabla v|^{q}dx\big)^{1/p}=
 \Vert \nabla v \Vert^{q/p}_{L^{q}(D;\RR^{l\times N})}= \Vert \nabla v \Vert^{q-1}_{L^{q}(D;\RR^{l\times N})}\).
 By \eqref{third estimate nabla u q}, this gives \eqref{estimate U in terms of V}.
\end{proof}

\begin{definition}\label{def:Ueta}   Given \(D\in\Ocal(\R^{N})\)  and  \(\eta>0\), we set 
\begin{align*}
&\Vcal^\eta(D):=\big\{  V \in
L^{p}( D;\RR^{l\times
N})\colon  \Vert  V\Vert^{p}_{L^p(D;\RR^{l\times
N})} <\eta|D| \big\},
\\
&
\Ucal^\eta_c( D ):=\bigg\{ u\in L^{p}( D ;\RR^d)\colon
\supp u
\subset \subset  D ,
\int_{ D } u \, dx = 0, \text{ and \eqref{Au divU} holds
for some
} V
\in \Vcal^\eta( D )\bigg\}.
\end{align*}
For every \(f\in \Fcal\) and every \(\xi\in \R^d\), we set
\begin{equation}\label{eq:Metac}
M^\eta_c(f,\xi, D ):=\inf \big\{ F(\xi + u, D )\colon u\in \Ucal^\eta_c( D )\big\},
\end{equation}
where $F$ is defined by \eqref{eq:Ffree}.
\end{definition}

We will see in Lemma \ref{lemma:stofg} that \(M^\eta_c\) satisfies
the subadditivity property mentioned in \eqref{subadd cubes}.

\begin{remark}\label{rmk:onUetaper}
If   \(R\subset\R^N\) is an open rectangle,  \(u\in\Ucal^\eta_c(R)
\), and \(  U \in\Vcal^\eta(R)
\) satisfies \eqref{Au divU}, we can extend   \(u   \) and
\(U\)  by \(R\)-periodicity, which extensions we do not relabel.
Then, 
using the fact that \(\supp u
\subset \subset R\), we
obtain that \(\Acal u = -\diverg U \) in \(\RR^N\)
in the sense of distributions. \end{remark}

\begin{remark}\label{rmk:pLip} For every   \(D\in\Ocal(\R^{N})\)
and every \(\eta>0\), we observe that \(0\in \Ucal^\eta_c(
D )\).
Therefore, we have \(M^\eta_c(f,\xi, D )\le F(\xi,
D )\) for every
\(\xi\in\R^d\). By \eqref{eq:pgrowth}, this implies
\begin{equation}\label{eq:pgrowth Mceta}
M^\eta_c(f,\xi, D )\le c_0(1+|\xi|^p)| D |.
\end{equation}
 If \(f\in \Fcal_{\lip}\),  it follows immediately from the definition  of \(M^\eta_c\) and
from \eqref{eq:pLip asymmetric} that for every \(\xi_1\) and \(\xi_2\in\R^d\),
we have
\begin{equation*}
\frac{M^\eta_c(f,\xi_1, D )}{| D |}\le \frac{M^\eta_c(f,\xi_2,
D )}{| D |}+
c_1\Big(1+ \Big( \frac{M^\eta_c(f,\xi_2, D )}{| D
|}\Big)^{\frac{p-1}p}
+|\xi_1-\xi_2|^{p-1}\Big)|\xi_2-\xi_1|.
\end{equation*}
Exchanging the roles of \(\xi_1\) and \(\xi_2\) and using \eqref{eq:pgrowth
Mceta}, we obtain
\begin{equation}\label{eq:pLip Mceta}
\Big|\frac{M^\eta_c(f,\xi_1, D )}{| D |} - \frac{M^\eta_c(f,\xi_2,
D )}{| D |}\Big|
\le c_5\big(1+ |\xi_1| +|\xi_2|\big)^{p-1}|\xi_2-\xi_1|
\end{equation}
for a suitable constant \(c_5\) depending only on \(c_1\) and
\(p\).
\end{remark}

The following result concerns the behavior of \(M^\eta_c(f_{k},\xi,Q)\)
on a cube \(Q\) when the functionals corresponding to \(f_k\)
\(\Gamma\)-converge.

\begin{proposition}\label{M and Mceta}
Let \((f_{k})_{k\in\NN}\) be a sequence in \(\Fcal_{\lip}\), let \(f\in
\Fcal\), and let \(F_k\) and \(F\) be the corresponding functionals
in \(\Ical\) defined by
\eqref{eq:Ffree}. Assume that for every \(D\in\Ocal(\RR^N)\),
the sequence  \((F_k(\cdot,D))_{k\in \NN}\)
\(\Gamma\)-converges to
 \(F(\cdot,D)\) with respect to the topology induced by \(\Vert\cdot\Vert_{D}^{\Acal}\)
on \(L^p(D;\RR^d)\). Then, for every  \(\eta>0\), every cube \(Q\subset\RR^N\),
and every \(\xi\in\RR^d\), we have
\begin{equation}
\limsup_{k\to\infty}M^\eta_c(f_{k},\xi,Q)\leq M_c(f,\xi,Q).
\label{limsup cube eta}
\end{equation}
Moreover, for every
\(\eps>0\), there exists \(\eta>0\) such that for every cube \(Q\subset\RR^N\)
 with side length  less than or equal to \( 1\) 
and  every \(\xi\in\RR^d\), we have 
\begin{equation}
M(f,\xi,Q)\le \liminf_{k\to\infty}M^\eta_c(f_{k},\xi,Q)+\eps|Q|.
\label{liminf cube eta}
\end{equation}
Consequently,
\begin{align}
&\displaystyle \sup_{\eta>0}\,\limsup_{k\to\infty}M^\eta_c(f_{k},\xi,Q)\leq
M_c(f,\xi,Q),
\label{sup eta limsup}
\\
&\displaystyle M(f,\xi,Q)\le \sup_{\eta>0}\, \liminf_{k\to\infty}M^\eta_c(f_{k},\xi,Q),
\label{sup eta liminf}
\end{align}
for every cube \(Q\subset\RR^N\)  with side length less than
or equal to \( 1\)  and every \(\xi\in\RR^d\).
\end{proposition}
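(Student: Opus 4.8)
The plan is to establish the two displayed inequalities \eqref{limsup cube eta} and \eqref{liminf cube eta} separately; the ``Consequently'' part then follows at once, since \eqref{sup eta limsup} is obtained by taking the supremum over \(\eta\) in \eqref{limsup cube eta}, and \eqref{sup eta liminf} by applying \eqref{liminf cube eta} with the \(\eta\) attached to each \(\eps\) and letting \(\eps\to0^+\).

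For \eqref{limsup cube eta} I would fix \(\eta>0\), an open cube \(Q\), \(\xi\in\R^d\) and \(\delta>0\), and pick \(\phi\in\Ucal_c(Q)\) with \(F(\xi+\phi,Q)<M_c(f,\xi,Q)+\delta\). Since \(\Acal_Q\phi=0\), the assumed \(\Gamma\)-convergence on \(Q\) yields a recovery sequence for \(\xi+\phi\); by Lemmas~\ref{lem:equiint} and~\ref{lem:pequiint} (and Remark~\ref{rmk:ext}, so that the periodic extension of \(\phi\) is \(\Acal\)-free) I may take it of the form \(\xi+v_k\) with \((v_k)\) \(p\)-equi-integrable, \(Q\)-periodic, \(\Acal v_k=0\) in \(\R^N\), \(\int_Q v_k=\int_Q\phi=0\), \(v_k\weakly\phi\) in \(L^p(Q;\R^d)\), and \(\limsup_k F_k(\xi+v_k,Q)\le F(\xi+\phi,Q)\). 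I then cut \((v_k)\) off near \(\partial Q\): I choose nested cubes \(\supp\phi\subset\subset Q_1\subset\subset Q_2\subset\subset Q\) with \(|Q\setminus Q_1|\) so small that, by \(p\)-equi-integrability, \(\sup_k c_0(|Q\setminus Q_1|+\|\xi+v_k\|_{L^p(Q\setminus Q_1)}^p)<\delta\); a cut-off \(\theta\in C^\infty_c(Q_2;[0,1])\) with \(\theta\equiv1\) on \(Q_1\); and a fixed \(\zeta\in C^\infty_c(Q_1)\) with \(\int_Q\zeta=1\); and I set \(w_k:=\theta v_k-\bigl(\int_Q\theta v_k\bigr)\zeta\), which has compact support in \(Q\) and zero average. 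Splitting \(F_k(\xi+w_k,Q)\) over \(Q_1\) and \(Q\setminus\overline{Q_1}\) and using \eqref{eq:measure on D}, \eqref{eq:pgrowth F}, \eqref{eq:pLip220}, and \(a_k:=\int_Q\theta v_k\to\int_Q\phi=0\), gives \(\limsup_k F_k(\xi+w_k,Q)\le F(\xi+\phi,Q)+\delta<M_c(f,\xi,Q)+2\delta\). The crux is that \(w_k\in\Ucal^\eta_c(Q)\) for \(k\) large. Since \(w_k\) has compact support, \eqref{Au divU} holds with the field \(V^0_k\) whose \(i\)-th column is (\(\pm\))\(A^iw_k\) (Remark~\ref{rem:Ueta}), and this \(V^0_k\) is supported in \(K:=\overline{\supp\theta}\cup\overline{\supp\zeta}\subset\subset Q\); applying Lemma~\ref{lem:existence U} on \(Q\) with \(U=V^0_k\) produces \(V_k\) still realizing \eqref{Au divU} for \(w_k\) with \(\|V_k\|_{L^p(Q;\R^{l\times N})}\le C_{K,Q}\|\Acal_Q w_k\|_{W^{-1,p}(Q;\R^l)}\) (by \eqref{eq:normDiv}). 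Now \(\Acal v_k=0\) gives, for \(\psi\in W^{1,q}_0(Q;\R^l)\), \(\langle\Acal_Q w_k,\psi\rangle=\int_Q v_k\cdot g_\psi\,dx-\langle\Acal_Q(a_k\zeta),\psi\rangle\) with \(g_\psi:=\sum_i(\partial_i\theta)(A^i)^\top\psi\), supported in \(Q_2\setminus Q_1\); since \(\psi\mapsto g_\psi\) is compact from \(W^{1,q}_0(Q;\R^l)\) to \(L^q(Q;\R^d)\), its image of the unit ball is precompact, so — using \(v_k\weakly\phi\) and \(\int_Q\phi\cdot g_\psi=0\) (\(\phi\) vanishing on \(Q_2\setminus Q_1\)) — one gets \(\sup_{\|\nabla\psi\|_{L^q}\le1}|\int_Q v_k\cdot g_\psi|\to0\), and together with \(a_k\to0\) this yields \(\|\Acal_Q w_k\|_{W^{-1,p}(Q)}\to0\), hence \(\|V_k\|_{L^p}^p<\eta|Q|\) for \(k\) large. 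Thus \(M^\eta_c(f_k,\xi,Q)\le F_k(\xi+w_k,Q)\) for \(k\) large, and \(\delta\to0^+\) gives \eqref{limsup cube eta}.

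For \eqref{liminf cube eta} I would fix \(\eps>0\) and let \(\eta>0\) be the threshold of Corollary~\ref{cor:pequiint} associated with \(\eps\) and with the a priori \(L^p\)-bound \(C\) for competitors coming from \eqref{eq:pgrowth F} together with \(M^\eta_c(f_k,\xi,Q)\le F_k(\xi,Q)\le c_0(1+|\xi|^p)|Q|\) (see the obstacle paragraph for the point that a single \(\eta\) can be taken uniformly in \(\xi\)). Given \(Q\) of side \(\le1\) and \(\xi\in\R^d\), I choose \(u_k\in\Ucal^\eta_c(Q)\) with \(F_k(\xi+u_k,Q)<M^\eta_c(f_k,\xi,Q)+\eps|Q|\); then \(\supp u_k\subset\subset Q\), \(\int_Q u_k=0\), \(\|u_k\|_{L^p(Q)}^p\le C|Q|\) for \(k\) large, and \(\|\widetilde\Acal_Q u_k\|_{\widetilde W^{-1,p}(Q)}^p\le\|V_k\|_{L^p}^p<\eta|Q|\) by \eqref{easy tilde W estimate}. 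Corollary~\ref{cor:pequiint} (applied with the target field shifted by \(\xi\)) then produces \(v_k\in L^p_{\per}(Q;\R^d)\) with \(\Acal v_k=0\) in \(\R^N\) and \(\int_Q v_k=\int_Q u_k=0\), i.e.\ \(v_k\in\Ucal(Q)\), and \(F_k(\xi+v_k,Q)<F_k(\xi+u_k,Q)+\eps|Q|\). Hence \(M(f_k,\xi,Q)\le F_k(\xi+v_k,Q)<M^\eta_c(f_k,\xi,Q)+2\eps|Q|\), and taking \(\liminf_k\) and invoking \(M(f,\xi,Q)\le\liminf_k M(f_k,\xi,Q)\) from \eqref{liminf cube} of Proposition~\ref{M and Mc} gives \(M(f,\xi,Q)\le\liminf_k M^\eta_c(f_k,\xi,Q)+2\eps|Q|\); replacing \(\eps\) by \(\eps/2\) at the outset proves \eqref{liminf cube eta}.

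I expect the main obstacle to be the compact-support modification in \eqref{limsup cube eta}: a plain cut-off makes the defect \(\Acal_Q w_k\) of order \(|\nabla\theta|\) in \(L^p\) on the transition layer, and this cannot be made smaller than \((\eta|Q|)^{1/p}\) by shrinking the layer. The resolution is threefold — pass first to a \(p\)-equi-integrable, \emph{exactly} \(\Acal\)-free sequence \(v_k\); then place the transition layer away from \(\supp\phi\), where \(v_k\weakly0\); and finally measure the defect in \(W^{-1,p}\) rather than \(L^p\), so that the compactness of \(\psi\mapsto g_\psi\) forces it to \(0\), after which Lemma~\ref{lem:existence U} converts this into the needed \(L^p\)-smallness of \(V_k\). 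A secondary delicate point is securing in \eqref{liminf cube eta} a single \(\eta\) valid for all \(\xi\): here one must track that the only \(\xi\)-dependence enters through the a priori \(L^p\)-bound in Corollary~\ref{cor:pequiint}, which is of the form \(C_0(1+|\xi|^p)\), and combine this with the \(p\)-Lipschitz estimate of Remark~\ref{rmk:pLip} to reduce to a uniform choice.
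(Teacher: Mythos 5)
Your proof is essentially correct and uses the same ingredients as the paper's, but the two arguments for \eqref{limsup cube eta} follow genuinely different routes, while the arguments for \eqref{liminf cube eta} are nearly identical.

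For \eqref{limsup cube eta}, the paper takes the raw recovery sequence $(u_k)$ for $\xi+\phi$ (no equi-integrable replacement) and feeds $\xi+u_k$ directly into the gluing Lemma~\ref{lemma: construction og wk} (with $B:=Q\setminus K$, $v_k:=\xi$), which delivers in one stroke a compactly supported $w_k$ with $\Acal_Q w_k\to 0$ in $W^{-1,p}$ and the energy estimate; the zero-average correction by $\varphi\int w_k$ and the passage through Lemma~\ref{lem:existence U} then match yours. You instead first pass to a $Q$-periodic, exactly $\Acal$-free, $p$-equi-integrable replacement $v_k$ via Lemmas~\ref{lem:equiint}, \ref{lem:pequiint} (and Remark~\ref{rmk:ext}), and then do the cut-off by hand, proving $\Acal_Q(\theta v_k)\to 0$ in $W^{-1,p}$ through the compactness of $\psi\mapsto g_\psi$ and the weak convergence $v_k\rightharpoonup\phi$. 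This is correct, but heavier than necessary: once you have $\|u_k-\phi\|_{W^{-1,p}(Q)}\to 0$ and $\|\Acal_Q u_k\|_{W^{-1,p}(Q)}\to 0$, the gluing lemma already controls the cut-off commutator in $W^{-1,p}$ — your compactness argument for $\psi\mapsto g_\psi$ is precisely what its proof internalizes. In exchange, you needed the $p$-equi-integrability of $v_k$ to bound the boundary-layer energy, whereas the paper handles that layer with the weak-$*$ limit measure $\nu$ inside Lemma~\ref{lemma: construction og wk}. Both are valid; the paper's is shorter because the lemma was built to do exactly this job. You do correctly identify the two key technical devices — Lemma~\ref{lem:existence U} for converting $W^{-1,p}$-smallness of the defect into the $L^p$-smallness needed for $\Vcal^\eta(Q)$, and Corollary~\ref{cor:pequiint} for \eqref{liminf cube eta}.

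For \eqref{liminf cube eta} your argument is the paper's, except that you conclude via \eqref{liminf cube} of Proposition~\ref{M and Mc} (since $v_k\in\Ucal(Q)$ gives $M(f_k,\xi,Q)\le F_k(\xi+v_k,Q)$) rather than passing to the weak limit $v$ of $(v_k)$ and invoking $\Gamma$-convergence once more — a slightly more modular finish, and correct. On your claim that a single $\eta$ can be made uniform in $\xi$: the paper's own proof does not establish this either (the $C$ fed to Corollary~\ref{cor:pequiint} is of the form $\mathrm{const}\cdot|\xi|^p+\mathrm{const}$, and the corollary is applied to the $\xi$-shifted integrand, whose growth/Lipschitz constants also grow with $|\xi|$), and your suggestion to ``reduce to a uniform choice'' via Remark~\ref{rmk:pLip} is not carried out. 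This is harmless because everything downstream (Theorem~\ref{thm:f from M eta}) fixes $\xi$ and only uses the $\sup_\eta$ inequalities \eqref{sup eta limsup}–\eqref{sup eta liminf}, for which $\eta$ may depend on $\xi$ freely, but as stated you should not present the uniformity as settled.
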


\begin{proof}
Fix   \(\xi\in\RR^d\) and   \(\delta>0\). By \eqref{eq:Mcset},
there exists \(u\in L^{p}(Q;\R^{d})\), with  \(\supp u \subset\subset
Q\), \(\int_{Q}u\,dx=0\), and \(\Acal_Q u=0\), such that
\begin{equation*}
F(\xi+u,Q)<M_c(f,\xi,Q)+\delta.
\end{equation*}
By \(\Gamma\)-convergence, there exists a sequence \((u_k)_{k\in\NN}\)
in \(L^{p}(Q;\R^{d})\) such that 
 \(u_k\to u\) in \(W^{-1,p}(Q;\R^{d})\),
 \(\Acal_Q u_k\to \Acal_Q u\) in \(W^{-1,p}(Q;\R^{l})\), and
\begin{equation}\label{lim from Gamma2}
\lim_{k\to\infty}F_k(\xi+u_k,Q)=F(\xi+u,Q)<M_c(f,\xi,Q)+\delta<+\infty.
\end{equation}
By \eqref{eq:pgrowth}, this inequality implies that  \((u_k)_{k\in\NN}\)
is bounded in \(L^{p}(Q;\R^{d})\); hence,  \(u_k\rightharpoonup
u\) weakly in
\(L^{p}(Q;\R^{d})\).

Fix a compact set \(K\), with \(\supp u\subset K\subset\subset
Q\), such that
\(c_0(1+|\xi|^p)|Q\setminus K|<\delta\), 
and let
\(D_1\), \(D_2\in \Ocal(\R^N)\) be such that \(K\subset D_1\subset\subset
D_2\subset\subset Q\).
We apply
Lemma \ref{lemma: construction og wk} with \(v_k:=u\) for every
\(k\in\NN\) and \(B:=Q\setminus K\) to obtain a sequence
\(w_k\in L^p(Q;\R^N)\) such that
\begin{align}
& 
w_k=u=0\ \hbox{ in }Q\setminus D_{2},\quad
w_k\rightharpoonup u\quad\text{weakly in }L^{p}(Q;\R^{d}),
\quad  w_k\to u \text{ in }W^{-1,p}(Q;\R^{d}),
\label{lim wk}
\\
& \Acal_Q w_k\to \Acal_Q u=0\ \text{ in } W^{-1,p}(Q;\R^{l}),
\nonumber 
\\
& \limsup_{k\to\infty}F_k(\xi+w_k,Q)\le \limsup_{k\to\infty}\big(F_k(\xi+u_k,D_2)+F_k(\xi,
Q\setminus K)\big), \label{lim Fkwk}
\end{align}
where we used in  \eqref{lim Fkwk} the fact that
\begin{equation*}
\begin{aligned}
\limsup_{k\to\infty}\big(F_k(\xi+u_k,D_2)+F_k(\xi+u,
Q\setminus K)\big)=\limsup_{k\to\infty}\big(F_k(\xi+u_k,D_2)+F_k(\xi,
Q\setminus K)\big)
\end{aligned}
\end{equation*}
because \(\supp u \subset K\).
By \eqref{eq:pgrowth} and by our choice of \(K\), we have \(F_k(\xi,
Q\setminus K)\le c_0(1+|\xi|^p)|Q\setminus K|<\delta\), and so
\eqref{lim Fkwk} gives
\begin{equation}\label{lim Fkwk2}
\limsup_{k\to\infty}F_k(\xi+w_k,Q)\le \lim_{k\to\infty} F_k(\xi+u_k,Q)+\delta.
\end{equation}
By \eqref{lim wk}, we have \(\int_Q w_k\,dx\to \int_{Q}u\,dx=0\).
Fix \(\varphi\in C^\infty_c(Q)\), with \(\int_Q \varphi\, dx=1\)  and \(\supp \varphi \subset\subset D_{2}\),
and set \(z_k:=w_k-\varphi\int_Qw_k \,dx\). 
 By the first formula in \eqref{lim wk}, we have
\begin{equation*}
z_k=0\ \hbox{ in }Q\setminus D_{2}.
\end{equation*}
Moreover,
\(\Acal_Q z_k\to0\)
in \(W^{-1,p}(Q;\R^{l})\), and  we have by \eqref{eq:pLip} that
\begin{equation*}
 \limsup_{k\to\infty}F_k(\xi+z_k,Q)=  \limsup_{k\to\infty}F_k(\xi+w_k,Q).
\end{equation*}
This inequality, together with \eqref{lim from Gamma2} and \eqref{lim
Fkwk2}, gives
 \begin{equation}\label{ls Fk < McQ}
 \limsup_{k\to\infty}F_k(\xi+z_k,Q)<  M_c(f,\xi,Q)+2\delta.
\end{equation}
Since the supports of the functions \(z_k\) are contained in \(\overline D_{2}\),  recalling \eqref{eq:normDiv}, Remark~\ref{rem:Ueta} and Lemma~\ref{lem:existence U} yield a constant \(C=C_{\overline D_{2},Q}>0\) such that for every \(k\in\N\), there exists  \(V_k \in L^{p}(Q;\RR^{l\times N})\) satisfying
\begin{align}
&\int_{Q}V_k\cdot\nabla\psi\,dx=\langle\widetilde\Acal_Q z_k, \psi\rangle\quad\hbox{for every }\psi\in W^{1,q}(Q;\R^{l}),
\nonumber
\\
&\Vert V_k\Vert_{L^p(Q;\RR^{l\times N})}\leq C \Vert  \Acal_Q z_k \Vert_{W^{-1,p}(Q;\RR^{l})}. \label{estimate U_k in terms of Acal}
\end{align}

Fix \(\eta>0\). Since \(\Acal_Q z_k\to0\) in \(W^{-1,p}(Q;\R^{l})\), we obtain from 
\eqref{estimate U_k in terms of Acal}  that, for \(k\) large enough, the functions
\(V_k\) belong to the set \(\Vcal^{\eta}(Q)\) introduced in Definition~\ref{def:Ueta}.
Moreover, since \(\supp z_k\subset\subset Q\) and \(\int_Qz_k
\, dx=0\), we have for
\(k\) large enough that the functions
\(z_k\) belong to the set \(\Ucal^\eta_{c}(Q)\) introduced in Definition~\ref{def:Ueta}.
By \eqref{eq:Metac}, this implies that
\(M^\eta_c(f_k,\xi,Q)\le F_k(\xi+z_k,Q)\). Together with 
 \eqref{ls Fk < McQ}, this  yields
\begin{equation*}
 \limsup_{k\to\infty}M^\eta_c(f_k,\xi,Q) <  M_c(f,\xi,Q)+2\delta.
\end{equation*}
Given the arbitrariness of \(\delta>0\), we obtain \eqref{limsup cube
eta}, from  which  \eqref{sup eta limsup} follows.

To prove \eqref{liminf cube eta}, we fix \(\eps>0\) and 
set \(C:=2^{p-1}(c_{0}+1)|\xi|^{p}+2^{p-1}c_{0}(1+2c_{0})\). Let
 \(\eta>0\) be as in Corollary~\ref{cor:pequiint}. 
 Using the definition of \(M^\eta_c\) (see \eqref{eq:Metac})
and \eqref{easy tilde W estimate}, 
we choose \(u_k \in L^{p}(Q;\RR^{d})\) for every \(k\in \NN\)   ,
with \(\supp u_k\subset\subset Q\), \(\int_{Q}u_k \,dx=0\), and
\(\|\widetilde\Acal u_k\|_{\widetilde W^{-1,p}(Q;\R^l)}<\eta|Q|\),
such that
\begin{equation}\label{Fkuk<M+1/k 2}
 \frac{1}{c_{0}}\int_{Q}|\xi+u_k|^{p\,}dx - c_{0}|Q|\le 
F_k(\xi+u_k,Q)<M^\eta_c(f_{k},\xi,Q) + \tfrac1k|Q| \le \Big(c_{0}(1+|\xi|^{p})+1\Big)|Q|,
\end{equation}
 where the first and last inequality follow from
  (a) in Definition~\ref{def:Ffrak}.  
 These inequalities imply that
 \(\|u_k\|_{L^{p}(Q;\R^d)}^{p}<C|Q|\) for every \(k\in\N\),
%
which allows us to extract a subsequence of \((u_k)_{k\in\NN}\), not relabeled, that converges to some function \(u\) weakly in \(L^{p}(Q;\RR^{d})\).

We extend each \(u_k\) to a \(Q\) periodic function, still denoted
\(u_k\). 
Then, for every \(D\in\Ocal(\R^N)\), the sequence \((u_k)_{k\in\NN}\) converges
weakly in \(L^p(D;\R^d)\) 
to the periodic extension of \(u\), still denoted by \(u\).
Since the embedding of \(L^{p}(Q;\RR^{d})\) into \(W^{-1,p}(Q;\RR^{d})\)
is compact,
\((u_k)_{k\in\NN}\) converges to \(u\) in \(W^{-1,p}(Q;\R^l)\); hence,
 \(\|u_k-u\|_{W^{-1,p}(Q;\RR^{d})}\le \eps |Q|\) for \(k\) large
enough.
 
Therefore, by Corollary \ref{cor:pequiint}, there exists
 \(v_k\in L^{p}_{\rm per}(Q;\R^d)\), with  \(\|v_k-u_k\|_{W^{-1,p}(Q;\R^d)}^{p}<\eps|Q|\),
\(\Acal v_k=0\) in \(\R^N\), and 
  \(\int_{Q}v_k \,dx=\int_{Q}u_k \,dx=0\),
 such that
\begin{equation}\label{Fk(v)<Fk(u)+e}
F_k(\xi+v_k,Q)<F_k(\xi+u_k,Q)+\eps|Q|.
\end{equation}
Since the right-hand side of \eqref{Fkuk<M+1/k 2} is bounded,
the previous inequality and \eqref{eq:pgrowth} imply 
that  \((v_k)_{k\in\NN}\) is bounded in \(L^{p}(Q;\RR^{d})\);
hence, a subsequence of \((v_k)_{k\in\NN}\), 
not relabeled, converges to some function \(v\) weakly in \(L^{p}(Q;\RR^{d})\).

By periodicity, we have  for every \(D\in \Ocal(\R^N)\) that  the
sequence  \((v_k)_{k\in\NN}\) converges  weakly in \(L^{p}(D;\RR^{d})\)
to the periodic extension of \(v\), still denoted by \(v\).
Since the embedding of \(L^{p}(D;\RR^{d})\) into \(W^{-1,p}(D;\RR^{d})\)
is compact 
and \(\Acal v_k=0\) in \(\R^N\) for every \(k\), we deduce 
that \(\Acal v=0\) in \(\R^{N}\) and  that \((v_k)_{k\in\NN}\)
converges
to \(v\) in
the topology induced by \(\Vert\cdot\Vert_D^\Acal\) on \(L^p(D;\RR^d)\).

Moreover, since  \(\int_{Q}v_k \,dx=0\) for every \(k\), we have
also \(\int_{Q}v\, dx=0\).
Therefore, by \eqref{eq:Mcset} and by \(\Gamma\)-convergence, we
have
\begin{equation*}
M(f,\xi,Q)\le F(\xi+v,Q)\le \liminf_{k\to\infty}F_k(\xi+v_k,Q).
\end{equation*}
Together with \eqref{Fkuk<M+1/k 2} and \eqref{Fk(v)<Fk(u)+e},
the preceding estimate gives \eqref{liminf cube eta}. Since \(\eps>0\) is arbitrary,
 we obtain \eqref{sup eta liminf} from \eqref{liminf cube eta}.
\end{proof}

By analogy with Theorem~\ref{thm:f from M}, we are now ready to present the characterization of the \(\Gamma\)-convergence
of the functionals associated with \(f_k\) by means of the behavior
of \(M^\eta_c(f_k,\xi,Q)\) on small cubes \(Q\).

\begin{theorem}\label{thm:f from M eta}
Let \((f_{k})_{k\in\NN}\) be a sequence in \(\Fcal_{\lip}\), let \((F_k)_{k\in\NN}\)
 be the corresponding sequence of 
functionals in \(\Ical\) defined by
\eqref{eq:Ffree}, and let \((F_k^{\Acal})_{k\in\NN}\)  be the
sequence obtained as in Definition \ref{FAcal}.  Suppose that
\begin{itemize}
\item[(a)] there exists  \(f\colon\R^N\times\R^d\to[0,+\infty)\)
such that
\begin{equation}\label{eq:f from M eta}
f(x,\xi)=\limsup_{\rho\to 0^+} \,\sup_{\eta>0}\, \liminf_{k\to\infty}
\frac{M^\eta_c(f_k,\xi,Q_\rho(x))}{\rho^N}
=\limsup_{\rho\to 0^+} \,\sup_{\eta>0}\, \limsup_{k\to\infty}
\frac{M^\eta_c(f_k,\xi,Q_\rho(x))}{\rho^N}
\end{equation}
for a.e.\ \(x\in \R^N\) and every \(\xi\in \R^d\).
\end{itemize}
Then,  there exists \(\hat f\in\Fcal_{\lip}\cap\Fcal_{\qc}\) such that \(f(x,\xi)=\hat f(x,\xi)\)  for a.e.\ \(x\in \R^N\)
and every \(\xi\in \R^d\),   and the functionals  \(F\) and  \(F^{\Acal}\)
introduced in \eqref{eq:Ffree} and 
 Definition \ref{FAcal} satisfy the following properties:
\begin{itemize}
\item[(b)] for every \(D\in\Ocal(\RR^N)\), the sequence  \((F_k(\cdot,D))_{k\in
\NN}\)
\(\Gamma\)-converges to
 \(F(\cdot,D)\) with respect to the topology induced by \(\Vert\cdot\Vert_{D}^{\Acal}\)
on \(L^p(D;\RR^d)\);
\item[(c)]
for every \(D\in\Ocal(\R^N)\), the sequence \((F_k^{\Acal}(\cdot,D))_{k\in\NN}\)
\(\Gamma\)-converges to \(F^{\Acal}(\cdot,D)\) in \(\ker\Acal_D\)
with respect to
the weak topology of \(L^p(D;\RR^d)\).
\end{itemize}

Conversely, if \(f\in\Fcal\) and the functionals  \(F\) and 
\(F^{\Acal}\) introduced in \eqref{eq:Ffree} and 
Definition \ref{FAcal} satisfy (b) or (c), then \(f\) satisfies
(a).
\end{theorem}

\begin{proof}  Since \(F_k\in\Fcal_{\lip}\) for every \(k\in\NN\) by Remark \ref{rem:Fcal gives Ffrac},  the equivalence between (b) and (c) is proved in
Theorem \ref{main constrained}.
If these conditions are satisfied, then by Corollary~\ref{cor:UCmain}
 there exists \(\hat f\in\Fcal_{\lip}\cap \Fcal_{\qc}\) such that 
\begin{equation*}
F(u,D):=\int_D \hat f(x, u(x))\, dx, \quad\text{for every }D\in\Ocal(\R^N)\text{ and }u\in L^p(D;\R^d),
\end{equation*}
which implies that
\(f(x,\xi)=\hat f(x,\xi)\)  for a.e.\ \(x\in \R^N\)
and every \(\xi\in \R^d\). 

Assume (b). Fix \(x\in\R^N\) and \(\xi\in \R^d\). By Proposition
\ref{M and Mceta}, we have for every \(0<\rho\leq 1\) that
\begin{align}
&\displaystyle \sup_{\eta>0}\,\limsup_{k\to\infty}M^\eta_c(f_{k},\xi,Q_\rho(x))\leq
M_c(f,\xi,Q_\rho(x)),\label{eq:from1}
\\
&\displaystyle M(f,\xi,Q_\rho(x))\le 
\sup_{\eta>0}\,\liminf_{k\to\infty}M^\eta_c(f_{k},\xi,Q_\rho(x)).\label{eq:from2}
\end{align}
Using Theorem~\ref{thm:formula}, \eqref{eq:from1}, and \eqref{eq:from2}, we conclude that
\begin{align*}
&\hat f(x,\xi) = \lim_{\rho\to 0^+} \frac{M(\hat f,\xi, Q_{\rho}(x))}{\rho^N}  = \lim_{\rho\to 0^+} \frac{M(f,\xi, Q_{\rho}(x))}{\rho^N}
\leq \limsup_{\rho\to 0^+} \sup_{\eta>0} \liminf_{k\to\infty}
\frac{M^\eta_c(f_k,\xi, Q_{\rho}(x))}{\rho^N}
\\
&\leq \limsup_{\rho\to 0^+} \sup_{\eta>0} \limsup_{k\to\infty}
\frac{M^\eta_c(f_k,\xi, Q_{\rho}(x))}{\rho^N} \leq \limsup_{\rho\to 0^+} \frac{M_c(f,\xi, Q_{\rho}(x))}{\rho^N}=
\limsup_{\rho\to 0^+} \frac{M_c(\hat f,\xi, Q_{\rho}(x))}{\rho^N}=\hat f(x,\xi).
\end{align*}
Thus, \eqref{eq:f from M eta} holds for a.e.~\(x\in \R^N\) and every \(\xi\in \R^d\), 
concluding the proof of (a).

Assume (a). By Corollary \ref{cor:UCmain},  there exists a subsequence
\((f_{k_j})_{j\in\NN}\) and a function \(\hat f\in\Fcal_{\lip}\cap\Fcal_{\qc}\)
 such that  for every \(D\in\Ocal(\RR^N)\), the sequence  \((F_{k_j}(\cdot,D))_{j\in\NN}\)
\(\Gamma\)-converges to
 \(\widehat F(\cdot,D)\) with respect to the topology induced
by \(\Vert\cdot\Vert_{D}^{\Acal}\) on \(L^p(D;\RR^d)\),
 where \(\widehat F\) is the functional associated with \(\hat
f\) by \eqref{eq:Ffree}.
Since (b)\(\Rightarrow\)(a), as proved above, we have for a.e.\ \(x\in \R^N\)
and every \(\xi\in \R^d\) that
\begin{equation*}
\hat f(x,\xi)=\limsup_{\rho\to 0^+} \,\sup_{\eta>0}\, \liminf_{j\to\infty}
\frac{M^\eta_c(f_{k_j},\xi,Q_\rho(x))}{\rho^N}
=\limsup_{\rho\to 0^+} \,\sup_{\eta>0}\, \limsup_{j\to\infty}
\frac{M^\eta_c(f_{k_j},\xi,Q_\rho(x))}{\rho^N}.
\end{equation*}
By \eqref{eq:f from M eta}, this implies that \(\hat f(x,\xi)=
f(x,\xi)\) for a.e.\ \(x\in \R^N\) and every \(\xi\in \R^d\);
hence,
\(f\in\Fcal_{\qc}\) and \(\widehat F=F\).
Since the  \(\Gamma\)-limit does not depend on the subsequence,
we obtain (b) from the Urysohn property of \(\Gamma\)-convergence (see \cite[Proposition
8.3]{DM93}).
\end{proof}

\section{Homogenization without periodicity assumptions}

Throughout this section, we fix a function \(f\in\Fcal_{\lip}\).
 We observe that if the vector space \(\mathrm{span}(\Lambda)\) generated by the wave cone \(\Lambda\) coincides with \(\R^d\)
then every \(f\in\Fcal_{\qc}\) belongs to \(\Fcal_{\lip}\) by Remark~\ref{rem: qc implies Lip}. 
 For
every \(\eps>0\), we consider the functions \(f_\eps\in\Fcal\)
defined by
\begin{equation*}
f_\eps(x,\xi):= f(\tfrac x\eps,\xi)
\end{equation*}
for every \(x\in\R^N\) and \(\xi\in\R^d\), the functionals \(F_\eps\in\Ical\)
associated with \(f_\eps\) by \eqref{eq:Ffree}, and the corresponding
\(\Acal\)-free functionals \(F_\eps^{\Acal}\) introduced in Definition
\ref{FAcal}.  Note that \(f_\eps\in\Fcal_{\lip}\) for every \(\eps>0\). 

The following theorem provides very general conditions on the
function \(f\) which ensure that there exists a function
\(f_{\ho}\in\Fcal\), independent of \(x\), such that for every
\(D\in\Ocal(\R^N)\), the  family of functionals \((F_\eps(\cdot,D))_{\eps>0}\)
 \(\Gamma\)-converges as \(\eps\to 0^+\) to the functional \(F_{\ho}(\cdot,D)\)
corresponding to \(f_{\ho}\).  By this we mean 
that,  for every sequence \((\eps_k)_{k\in\NN}\) of positive numbers
converging to \(0\), the sequence 
\((F_{\eps_k}(\cdot,D))_{k\in \NN}\) \(\Gamma\)-converges to
\(F_{\ho}(\cdot,D)\).

\begin{theorem}\label{thm:homogenization eta}
Suppose that for every \(x\in\R^N\), \(\xi\in \Q^d\), and \(k\in\NN\),
the limit
\begin{equation}\label{lim large cubes eta}
f_{\ho}^k(\xi):=\lim_{r\to+\infty} \frac{M^{1/k}_c(f,\xi,Q_r(rx))}{r^N}
\end{equation}
exists and is independent of \(x\)  (see Definition \ref{def:Ueta}).
Then, \(f_{\ho}^k\) can be extended as a continuous function on
\(\R^d\), which we still denote by
\(f_{\ho}^k\), and \eqref{lim large cubes eta} holds for every
\(\xi\in\R^N\).
Let \(f_{\ho}\colon \R^d\to [0,+\infty)\) be the function defined
by
\begin{equation}\label{eq:def fhom}
f_{\ho}(\xi):=\sup_{k\in\NN}f_{\ho}^k(\xi)= \lim_{k\to \infty}
f_{\ho}^k(\xi)
\end{equation}
for every \(\xi\in\R^d\).
Then, \(f_{\ho}\in\Fcal_{\lip}\cap\Fcal_{\qc}\) and the following properties hold:
\begin{itemize}
\item[(a)] for every \(D\in\Ocal(\RR^N)\), the family  \((F_\eps(\cdot,D))_{\eps>0}\)
\(\Gamma\)-converges as \(\eps\to 0^+\) to
 \(F_{\ho}(\cdot,D)\) with respect to the topology induced by
\(\Vert\cdot\Vert_{D}^{\Acal}\)
on \(L^p(D;\RR^d)\);
\item[(b)]
for every \(D\in\Ocal(\R^N)\),  the family  \((F_\eps^{\Acal}(\cdot,D))_{\eps>0}\)
\(\Gamma\)-converges as \(\eps\to 0^+\) to
 \(F_{\ho}^{\Acal}(\cdot,D)\) in \(\ker\Acal_D\) with respect
to
the weak topology of \(L^p(D;\RR^d)\).
\end{itemize}
\end{theorem}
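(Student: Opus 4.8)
The plan is to deduce both statements from Theorem~\ref{thm:f from M eta} by a scaling argument. Since the $\Gamma$-convergence of a family is, by the convention recalled just before the statement, equivalent to $\Gamma$-convergence along every sequence, I fix an arbitrary sequence $\eps_k\to0^+$ and set $f_k:=f_{\eps_k}\in\Fcal$, with associated functionals $F_k=F_{\eps_k}\in\Ical$. The key elementary fact is the scaling identity
\[
\frac{M^\eta_c(f_{\eps},\xi,Q_\rho(x))}{\rho^N}=\frac{M^\eta_c(f,\xi,Q_{\rho/\eps}(x/\eps))}{(\rho/\eps)^N},
\]
valid for all $\eps,\rho>0$, $x\in\R^N$, $\xi\in\R^d$, and $\eta>0$. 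It follows from the substitution $z=\eps w$: if $u\in\Ucal^\eta_c(Q_\rho(x))$ with associated $V\in\Vcal^\eta(Q_\rho(x))$ as in Definition~\ref{def:Ueta}, then the rescaled functions $\tilde u(w):=u(\eps w)$ and $\tilde V(w):=V(\eps w)$ satisfy $F_\eps(\xi+u,Q_\rho(x))=\eps^N F(\xi+\tilde u,Q_{\rho/\eps}(x/\eps))$, the relation \eqref{Au divU} on $Q_{\rho/\eps}(x/\eps)$, and $\Vert\tilde V\Vert_{L^p}^p=\eps^{-N}\Vert V\Vert_{L^p}^p<\eta\,(\rho/\eps)^N$, so that $\tilde u\in\Ucal^\eta_c(Q_{\rho/\eps}(x/\eps))$ \emph{with the same $\eta$}, and $u\mapsto\tilde u$ is a bijection between these two classes; hence the infima agree up to the factor $\eps^N$. (These are the computations already performed in the proof of Corollary~\ref{cor:pequiint}; the point of using the relaxed quantity $M^\eta_c$ rather than $M$ here is exactly that the constraint $\Vert V\Vert_{L^p}^p<\eta|D|$ is the one that rescales correctly.) Putting $r=\rho/\eps$, so that $x/\eps=r(x/\rho)$ and $r\to+\infty$ as $\eps\to0^+$ with $\rho$ fixed, and invoking the hypothesis \eqref{lim large cubes eta}, we get for every $x\in\R^N$, every $\xi\in\Q^d$, and every $k\in\NN$ that
\[
\lim_{j\to\infty}\frac{M^{1/k}_c(f_{\eps_j},\xi,Q_\rho(x))}{\rho^N}=\lim_{r\to+\infty}\frac{M^{1/k}_c(f,\xi,Q_r(rx/\rho))}{r^N}=f^k_{\ho}(\xi).
\]

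The next step is the extension claim together with $f_{\ho}\in\Fcal$. By \eqref{eq:pLip Mceta} the maps $\xi\mapsto M^{1/k}_c(f,\xi,Q_r(rx))/r^N$ are equi-Lipschitz on every bounded subset of $\R^d$, uniformly in $r>0$ and $x\in\R^N$; since they converge pointwise on the dense set $\Q^d$ as $r\to+\infty$, they converge locally uniformly to a function that is locally Lipschitz, hence continuous, agrees with $f^k_{\ho}$ on $\Q^d$, and is still independent of $x$. Denoting this extension again by $f^k_{\ho}$, the limit \eqref{lim large cubes eta} holds for every $\xi\in\R^d$. Integrating \eqref{eq:pgrowth} over mean-zero competitors and using Jensen's inequality, together with \eqref{eq:pgrowth Mceta}, shows $f^k_{\ho}$ satisfies \eqref{eq:pgrowth}; letting $r\to+\infty$ in the asymmetric Lipschitz inequality for $M^{1/k}_c/|D|$ recorded in Remark~\ref{rmk:pLip} shows it satisfies \eqref{eq:pLip asymmetric}, hence \eqref{eq:pLip}. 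Because $k\mapsto M^{1/k}_c$ is nondecreasing (the admissible class shrinks as $k$ grows), $f_{\ho}=\sup_k f^k_{\ho}=\lim_k f^k_{\ho}$, matching \eqref{eq:def fhom}, and the bounds \eqref{eq:pgrowth} and \eqref{eq:pLip asymmetric} pass to the supremum; since $f_{\ho}$ does not depend on $x$, it lies in $\Fcal$ (and is continuous).

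Now I verify hypothesis (a) of Theorem~\ref{thm:f from M eta} for the sequence $(f_{\eps_k})_{k\in\NN}$ and the $x$-independent function $f_{\ho}$. Since $\eta\mapsto M^\eta_c$ is nonincreasing, $\sup_{\eta>0}$ equals $\sup_{k\in\NN}$ over $\eta=1/k$; combining this with the limit computed above (and its $\limsup$ analogue, which has the same value because the limit in $r$ exists), one obtains for every $x\in\R^N$ and every $\xi\in\R^d$ that
\[
\sup_{\eta>0}\,\liminf_{j\to\infty}\frac{M^\eta_c(f_{\eps_j},\xi,Q_\rho(x))}{\rho^N}=\sup_{\eta>0}\,\limsup_{j\to\infty}\frac{M^\eta_c(f_{\eps_j},\xi,Q_\rho(x))}{\rho^N}=\sup_{k\in\NN}f^k_{\ho}(\xi)=f_{\ho}(\xi),
\]
a value independent of $\rho$, so that applying $\limsup_{\rho\to0^+}$ leaves it unchanged. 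Thus \eqref{eq:f from M eta} holds with $f=f_{\ho}$, and Theorem~\ref{thm:f from M eta} gives at once $f_{\ho}\in\Fcal_{\qc}$ together with, for every $D\in\Ocal(\R^N)$, the $\Gamma$-convergence of $(F_{\eps_k}(\cdot,D))_k$ to $F_{\ho}(\cdot,D)$ with respect to $\Vert\cdot\Vert^{\Acal}_D$ and of $(F^{\Acal}_{\eps_k}(\cdot,D))_k$ to $F^{\Acal}_{\ho}(\cdot,D)$ in $\ker\Acal_D$ with respect to the weak topology of $L^p(D;\R^d)$. Since $\eps_k\to0^+$ was arbitrary, this is exactly (a) and (b).

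The only non-routine point is the scaling identity, and within it the observation that the constraint defining $\Ucal^\eta_c(D)$ rescales with the \emph{same} parameter $\eta$ — which is precisely the reason the subadditivity-friendly quantity $M^\eta_c$ was introduced in place of $M$; everything else is bookkeeping with pointwise limits, one use of the Lipschitz estimate \eqref{eq:pLip Mceta} for equicontinuity, and one invocation of Theorem~\ref{thm:f from M eta} (which in turn rests on Theorem~\ref{thm:formula}).
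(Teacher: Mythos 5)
Your proof is correct and follows essentially the same route as the paper's: establish the scaling identity \(M^{1/k}_c(f_\eps,\xi,Q_\rho(x))=\eps^N M^{1/k}_c(f,\xi,Q_{\rho/\eps}(x/\eps))\), use the hypothesis to identify \(f_{\ho}^k(\xi)\) with \(\lim_{\eps\to0^+}M^{1/k}_c(f_\eps,\xi,Q_\rho(x))/\rho^N\), pass to \(\sup_{\eta>0}\) by monotonicity, and invoke Theorem~\ref{thm:f from M eta}; the continuity/extension step via \eqref{eq:pLip Mceta} is also the same. Your extra verification that \(f_{\ho}\) satisfies \eqref{eq:pgrowth}--\eqref{eq:pLip} is harmless but redundant, since Theorem~\ref{thm:f from M eta} already delivers \(f_{\ho}\in\Fcal_{\qc}\) from hypothesis (a) alone.
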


We will see in Section \ref{sect:stochastic} that \eqref{lim
large cubes eta}
is satisfied almost surely under the standard hypotheses of
stochastic homogenization. In particular, it is  satisfied
when \(x\mapsto f(x,\xi)\) is \(Q_{1}(0)\)-periodic for every \(\xi\in\R^{d}\).
The following proposition examines another simple case in which \eqref{lim
large cubes eta} holds: \(f\) is the sum
of a periodic function with respect to \(x\) and  a function
whose support has compact projection onto \(\R^{N}\).

\begin{proposition}\label{periodic + compact support} Assume
that \(f\) can be written as
\begin{equation}\label{fper + fc} 
f=f_{per}+f_{comp},
\end{equation}
where \(f_{per},\,f_{comp}\in\Fcal_{\lip}\) satisfy  the two following
properties:
\begin{align}
(a)\enspace &x\mapsto f_{per}(x,\xi)\hbox{ is }Q_{1}(0)\hbox{-periodic for
every }\xi\in\R^{d};
\label{fper}
\\
(b)\enspace&\hbox{there exists }R>0 \hbox{ such that }f_{comp}(x,\xi)=0\hbox{
for every }x\in \R^{N}\setminus Q_R(0)\hbox{ and every }\xi\in
\R^{d}.
\label{compact support}
\end{align}
Then, for every \(x\in\R^N\), \(\xi\in \Q^d\), and \(k\in\NN\),
the limit
\begin{equation}\label{lim large cubes eta 2}
f_{\ho}^k(\xi):=\lim_{r\to+\infty} \frac{M^{1/k}_c(f,\xi,Q_r(rx))}{r^N}
\end{equation}
exists and is independent of \(x\).
\end{proposition}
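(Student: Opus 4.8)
The plan is to reduce the statement to the periodic case, which is known, by showing that the compactly supported perturbation $f_{comp}$ does not affect the limit in \eqref{lim large cubes eta 2}. First I would treat the purely periodic part: when $f=f_{per}$ with $x\mapsto f_{per}(x,\xi)$ being $Q_1(0)$-periodic, the limit $\lim_{r\to+\infty} M^{1/k}_c(f_{per},\xi,Q_r(rx))/r^N$ exists and is independent of $x$. This follows from the subadditivity of $M^{1/k}_c$ in the cube variable (established in Lemma~\ref{lemma:stofg}, as announced in the text after Definition~\ref{def:Ueta}) together with the periodicity, via a standard Fekete-type argument: the map $Q\mapsto M^{1/k}_c(f_{per},\xi,Q)$ is subadditive, bounded above by $c_0(1+|\xi|^p)|Q|$ thanks to \eqref{eq:pgrowth Mceta}, and invariant under integer translations by periodicity of $f_{per}$, so the limit over cubes of side $r$ exists; translating the center from $rx$ to $0$ changes the cube by at most an $O(r^{N-1})$-measure boundary layer, which is negligible after dividing by $r^N$ because of the upper bound \eqref{eq:pgrowth Mceta} applied on that layer and the monotonicity-type estimates for $M^{1/k}_c$. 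Denote this common value $f_{\ho}^k(\xi)$.

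Next I would control the effect of $f_{comp}$. Fix $x$, $\xi$, $k$, and $r$ large. Since $f_{comp}(y,\xi)=0$ for $y\notin Q_R(0)$ and $\xi\mapsto f(y,\xi)$ obeys the $p$-Lipschitz bound \eqref{eq:pLip22}, for any admissible competitor $u\in\Ucal^{1/k}_c(Q_r(rx))$ we have
\begin{equation*}
\left|\int_{Q_r(rx)} f(y,\xi+u(y))\,dy - \int_{Q_r(rx)} f_{per}(y,\xi+u(y))\,dy\right| = \int_{Q_r(rx)\cap Q_R(0)} f_{comp}(y,\xi+u(y))\,dy,
\end{equation*}
and the right-hand side is bounded, using the $p$-growth \eqref{eq:pgrowth} of $f_{comp}\in\Fcal$, by $c_0\int_{Q_R(0)}(1+|\xi+u(y)|^p)\,dy$. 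The subtlety is that $\|u\|_{L^p}$ on the full cube can be as large as $O(r^N)$, but restricted to the fixed window $Q_R(0)$ this is not automatically small. To handle this I would first reduce to near-minimizers $u$ of $M^{1/k}_c(f,\xi,Q_r(rx))$ that are uniformly bounded in $L^p$-density, i.e.\ with $r^{-N}\|u\|^p_{L^p(Q_r(rx))}\le C_\xi$ by the lower $p$-growth bound in \eqref{eq:pgrowth F}; this still does not localize mass away from $Q_R(0)$. The cleanest route is instead to compare the two minimum values directly by feeding near-minimizers for one problem as competitors for the other: a near-minimizer $u$ for $M^{1/k}_c(f_{per},\xi,Q_r(rx))$ is also admissible for $M^{1/k}_c(f,\xi,Q_r(rx))$, and
\begin{equation*}
M^{1/k}_c(f,\xi,Q_r(rx)) \le \int_{Q_r(rx)} f_{per}(y,\xi+u(y))\,dy + c_0\int_{Q_R(0)}(1+|\xi+u(y)|^p)\,dy.
\end{equation*}
Here the last integral need not be small for a generic near-minimizer, but I would invoke the equi-integrability improvement from Lemma~\ref{lem:equiint} / \cite[Lemma~3.1]{BrFoLe00} applied to the rescaled sequence: after rescaling $Q_r(rx)$ to the unit cube, a near-minimizing sequence can be replaced by a $p$-equi-integrable one with the same (or smaller) energy, and $p$-equi-integrability forces $\int_{E} |\xi+u|^p$ to be uniformly small whenever $|E|$ is small; since $|Q_R(0)|/r^N\to 0$, the perturbation term is $o(r^N)$. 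This gives $\limsup_{r} M^{1/k}_c(f,\xi,Q_r(rx))/r^N \le f_{\ho}^k(\xi)$, and the symmetric argument (swapping the roles of $f$ and $f_{per}$, again after equi-integrabilizing) gives the reverse inequality, so the limit \eqref{lim large cubes eta 2} exists and equals $f_{\ho}^k(\xi)$, manifestly independent of $x$.

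The main obstacle is exactly the localization issue just described: $M^{1/k}_c$ is an infimum over competitors whose $L^p$-mass is only controlled globally on the large cube, while $f_{comp}$ lives on a fixed bounded window whose relative size shrinks; without an equi-integrability argument one cannot conclude the window contribution is negligible, since a minimizing sequence could a priori concentrate mass inside $Q_R(0)$. I expect the bookkeeping to require care in passing between the cube $Q_r(rx)$, its rescaling to $Q_1(0)$, and the behavior of the constraint $\Ucal^{1/k}_c$ under this rescaling — but the rescaling identities for $\widetilde\Acal$ and for $\|\cdot\|_{W^{-1,p}}$ norms are exactly the ones worked out in the proof of Corollary~\ref{cor:pequiint}, so these are routine. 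A minor additional point: the statement is asserted for $\xi\in\Q^d$ first and then extended to all $\xi$; this extension is immediate from the uniform Lipschitz bound \eqref{eq:pLip Mceta} for $\xi\mapsto M^{1/k}_c(f,\xi,Q)/|Q|$, which passes to the limit in $r$.
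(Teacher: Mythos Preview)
Your overall strategy---reduce to the periodic case and show the compactly supported perturbation is asymptotically negligible---matches the paper, but your implementation of the \(\limsup\) direction has a genuine gap, and you also overcomplicate the \(\liminf\) direction.

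For the \(\liminf\) inequality you propose a ``symmetric'' equi-integrability argument, but none is needed: since \(f_{comp}\in\Fcal\) we have \(f_{comp}\ge 0\), hence \(f_{per}\le f\) pointwise, and therefore \(M^{1/k}_c(f_{per},\xi,Q)\le M^{1/k}_c(f,\xi,Q)\) for every cube. This immediately gives \(f^k_{\ho}(\xi)\le \liminf_{r}M^{1/k}_c(f,\xi,Q_r(rx))/r^N\), exactly as in the paper.

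For the \(\limsup\) inequality, your plan is to take near-minimisers \(u_r\in\Ucal^{1/k}_c(Q_r(rx))\) for \(M^{1/k}_c(f_{per},\xi,\cdot)\), rescale to the unit cube, and invoke Lemma~\ref{lem:equiint} / \cite[Lemma~3.1]{BrFoLe00} to pass to a \(p\)-equi-integrable sequence whose mass on the shrinking rescaled window is negligible. This does not work as stated, for two reasons. First, those lemmas require \(\Acal_D u_k\to 0\) in \(W^{-1,p}\), whereas competitors in \(\Ucal^{1/k}_c\) satisfy only a \emph{fixed} bound \(\|\widetilde\Acal_Q u\|^p_{\widetilde W^{-1,p}}<\tfrac1k|Q|\); under the rescaling worked out in Corollary~\ref{cor:pequiint} this bound stays at \((1/k)^{1/p}\) and does not tend to zero as \(r\to\infty\). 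Second, even when applicable, Lemma~\ref{lem:equiint} produces \(Q\)-periodic functions with \(\Acal v_k=0\), not functions with compact support in \(Q\); hence the equi-integrable replacements would lie in \(\Ucal(Q)\), not in \(\Ucal^{1/k}_c(Q)\), and would not be admissible test functions for \(M^{1/k}_c(f,\xi,\cdot)\). So you cannot feed them back into the minimum problem you are trying to bound.

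The paper bypasses this localisation difficulty by a direct construction that exploits subadditivity. One partitions \(Q_r(rx)\) into \(m^N\) congruent subcubes of side \(r/m\); for \(r\ge mR\) at most \(2^N\) of them meet \(Q_R(0)\). On each ``good'' subcube (disjoint from \(Q_R(0)\)) one places a near-minimiser for \(M^{1/k}_c(f_{per},\xi,\cdot)\), and on the \(\le 2^N\) ``bad'' subcubes one sets \(u=0\). The glued function lies in \(\Ucal^{1/k}_c(Q_r(rx))\) precisely because of the subadditive structure (each piece is in \(\Ucal^{1/k}_c\) of its own subcube, and the witnesses \(V\) patch additively). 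Since \(f=f_{per}\) on the good subcubes, the resulting energy is bounded by \(\sum_\alpha M^{1/k}_c(f_{per},\xi,Q_{r/m}(\cdot))+2^N c_0(1+|\xi|^p)(r/m)^N+\delta r^N\); dividing by \(r^N\), letting \(r\to\infty\) (using the periodic limit on each subcube), and then \(m\to\infty\) and \(\delta\to 0\) gives the \(\limsup\) bound.

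A smaller point: your ``boundary layer'' argument for independence of the centre in the periodic case is only valid when \(Q_r(rx)\) and \(Q_r(0)\) overlap, i.e.\ for \(|x|\) small; for general \(x\in\R^N\) you must first use the integer-translation invariance of \(M^{1/k}_c(f_{per},\xi,\cdot)\) to reduce to centres in a bounded set, after which the standard subadditive argument (as in Theorem~\ref{ergodic} or a Fekete-type lemma) applies.
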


\begin{proof} By \eqref{fper}, we can apply Theorem \ref{prop:
stocf} to  \(f_{per}\), considered as a 
stochastically periodic random integrand (independent of \(\omega\)),
and we obtain
 for every \(k\in\N\), \(x\in\R^{N}\), and \(\xi\in\Q^{d}\)
that the limit
\begin{equation}\label{lim large cubes eta per}
f_{\ho}^k(\xi):=\lim_{r\to+\infty} \frac{M^{1/k}_c(f_{\per},\xi,Q_r(rx))}{r^N}
\end{equation}
exists and is independent of \(x\). We remark that  since probability
is not involved here, this result can be obtained
directly by  adapting some arguments of  \cite{BrFoLe00}. 

We claim that
\begin{equation}\label{lim large cubes eta per + comp}
f_{\ho}^k(\xi)=\lim_{r\to+\infty} \frac{M^{1/k}_c(f,\xi,Q_r(rx))}{r^N}
\end{equation}
for every \(k\in\N\), \(x\in\R^{N}\), and \(\xi\in\Q^{d}\).
Since \(f_{comp}\in\Fcal\), we have
\(0\le f_{comp}\) by Definition \ref{def:Frak}. Hence, \(f_{per}\le f\) by \eqref{fper + fc}.
This implies that
\(M^{1/k}_c(f_{per},\xi,Q_r(rx))\le M^{1/k}_c(f,\xi,Q_r(rx))\),
which, together with 
\eqref{lim large cubes eta per}, yields
\begin{equation}\label{liminf large cubes eta per + comp}
f_{\ho}^k(\xi)\le \liminf_{r\to+\infty} \frac{M^{1/k}_c(f,\xi,Q_r(rx))}{r^N}.
\end{equation}

In order to prove that
\begin{equation}\label{limsup large cubes eta per + comp}
\limsup_{r\to+\infty} \frac{M^{1/k}_c(f,\xi,Q_{r}(rx))}{r^N}\le
f_{\ho}^k(\xi),
\end{equation}
we fix  \(k\in\N\), \(x=(x_{1},\dots,x_{N})\in\R^{N}\), and \(\xi\in\Q^{d}\).
Similarly to the proof of Corollary~\ref{cor:pequiint}, given \(m\in\N\), we set
\(A_{m}:= \{1,\dots, m\}^{N}\) and for every \(\alpha=(\alpha_{1},\dots,\alpha_{N})\in
A_{m}\), we consider the point
\begin{equation}\label{x(alpha)}
x(\alpha):=(x_{1},\dots,x_{N}) + \Big(-\frac{1}{2} - \frac{1}{2m} + \frac{\alpha_1}{m},
\dots, -\frac{1}{2} - \frac{1}{2m} + \frac{\alpha_N}{m}\Big).
\end{equation}
We observe that
\[
\overline{Q_1(x)}=\bigcup_{\alpha\in A_{m}} \overline{Q_{1/m}(x(\alpha))}\quad\hbox{and}\quad
Q_{1/m}(x(\alpha))\cap Q_{1/m}(x(\beta))=\emptyset\hbox{ for
}\alpha\neq \beta.
\]
Hence, for every \(r>0\),
\begin{equation}\label{subcubes Qr}
\overline{Q_r(rx)}=\bigcup_{\alpha\in A_{m}} \overline{Q_{r/m}(r
x(\alpha))}\quad\hbox{and}\quad Q_{r/m}(r x(\alpha))\cap 
Q_{r/m}(r(x(\beta))=\emptyset\hbox{ for }\alpha\neq \beta.
\end{equation}

Given \(r\ge m R\), we set
\begin{equation}\label{intersecting subcubes}
A^{r,R}_{m}:=\big\{\alpha \in A_{m}\colon Q_{r/m}(r x(\alpha))\cap Q_{R}(0)\neq
\emptyset\big\}.
\end{equation}
From \eqref{x(alpha)}, we deduce that \(\alpha\in A^{r,R}_{m}\)
if and only if for every \(j=1,\dots,N\), we have
\begin{equation}\label{intersecting intervals}
\Big(r x_j-\frac{r}{2} - \frac{r}{m} + \frac{r\alpha_j}{m}, 
r x_j-\frac{r}{2} + \frac{r\alpha_j}{m}\Big)
\cap
\Big(-\frac{R}{2}, \frac{R}{2}\Big)
\neq\emptyset.
\end{equation}
Since  the intervals depending on \(\alpha_j\)
in the previous formula are pairwise disjoint and their length is \(\frac{r}{m}\ge R\),
for every \(j=1,\dots,N\), there are at most two elements \(\alpha_{j}\in
\{1,\dots,m\}\) such that \eqref{intersecting intervals} holds.
This implies that the number \(\#A^{r,R}_{m}\) of elements of
\(A^{r,R}_{m}\) satisfies
\begin{equation}\label{number J}
\#A^{r,R}_{m}\le 2^{N}.
\end{equation}

Given \(\delta>0\), we use Definition \ref{def:Ueta} to find
for every \(\alpha\in
A_{m}\) a function  \(u_{\alpha}\in \Ucal_{c}^{1/k}(Q_{r/m}(rx(\alpha)))\)
such that
\begin{equation}\label{ui quasi minimizer}
\int_{Q_{r/m}(r x(\alpha))}f_{per}(y, \xi+u_{\alpha}(y))\,dy< M^{1/k}_c(f_{\per},\xi,Q_{r/m}(rx(\alpha)))+\delta
\frac{r^{N}}{m^{N}}.
\end{equation}
By \eqref{subcubes Qr}, we can define  \(u\colon Q_{r}(rx)\to
\R^{d}\) by setting \(u(y):=0\) for every \(\alpha \in A^{r,R}_{m}\)
and  \(u(y):=u_{\alpha}(y)\) for every \(\alpha\in A_{m}\setminus
A^{r,R}_{m}\).
Recalling Definition \ref{def:Ueta} and \eqref{subcubes Qr}, we
see that  \(u \in \Ucal_{c}^{1/k}(Q_{r}(rx))\), and so \[
M^{1/k}_c(f,\xi,Q_{r}(rx))\le \int_{Q_{r}(r x)}f(y, \xi+u(y))\,dy.
\]
By the definition of \(u\) and by \eqref{eq:pgrowth}, \eqref{fper
+ fc}, \eqref{compact support},  \eqref{intersecting subcubes},
\eqref{number J}, and \eqref{ui quasi minimizer}, we have 
\begin{equation}\label{74rtUV}
\begin{aligned}
M^{1/k}_c(f,\xi,Q_{r}(rx))&\le \sum_{\alpha\in A^{r,R}_{m}} \int_{Q_{r/m}(r
x(\alpha))}f(y, \xi)\,dy +
\sum_{\alpha\in A_{m}\setminus A^{r,R}_{m}} \int_{Q_{r/m}(r x(\alpha))}f_{per}(y,
\xi+u_{\alpha}(y))\,dy
\\
&\le 2^{N}c_{0}(1+|\xi|^{p})\frac{r^{N}}{m^{N}}+\delta r^{N}+
\sum_{\alpha\in A_{m}} M^{1/k}_c(f_{\per},\xi,Q_{r/m}(rx(\alpha))).
\end{aligned}
\end{equation}
Since  \(Q_{r/m}(rx(\alpha))=Q_{r/m}((r/m)(mx(\alpha))\),
the equality  
\[M^{1/k}_c(f_{\per},\xi,Q_{r/m}(rx(\alpha)))=M^{1/k}_c(f_{\per},
\xi,Q_{r/m}((r/m)(mx(\alpha))),\]
 together with \eqref{lim large cubes eta per}, leads to
\[
\lim_{r\to+\infty}\frac{M^{1/k}_c(f_{\per},\xi,Q_{r/m}(rx(\alpha)))}{r^{N}}
=\frac{1}{m^{N}}f^{1/k}_{hom}(\xi).
\]
From this equality and from \eqref{74rtUV}, we get
\[
\limsup_{r\to+\infty}\frac{M^{1/k}_c(f,\xi,Q_{r}(rx)}{r^{N}}\le
c_{0}(1+|\xi|^{p})\frac{2^{N}}{m^{N}}  + \delta + f^{1/k}_{hom}(\xi).
\]
Taking the limit as \(m\to+\infty\) and \(\delta\to0\), we obtain
\eqref{limsup large cubes eta per + comp}, which, together with
\eqref{liminf large cubes eta per + comp}, yields \eqref{lim large
cubes eta per + comp}, concluding the proof of
\eqref{lim large cubes eta 2}.
\end{proof}

\begin{proof}[Proof of Theorem \ref{thm:homogenization eta}]
By \eqref{eq:pLip Mceta}, we have
  for every \(\xi_1\), \(\xi_2\in \R^d\)
and every \(k\in\NN\) that%
\begin{equation*}
 \Big|\frac{M^{1/k}_c(f,\xi_1,Q_r(rx))}{r^N}-  \frac{M^{1/k}_c(f,\xi_2,Q_r(rx))}{r^N}\Big|
\le  c_5\big(1+ |\xi_1| +|\xi_2|\big)^{p-1}|\xi_2-\xi_1|.
\end{equation*}
Hence,
\begin{equation*}
|f_{\ho}^k(\xi_1)- f_{\ho}^k(\xi_2)| 
\le c_5\big(1+ |\xi_1| +|\xi_2|\big)^{p-1}|\xi_2-\xi_1|
\end{equation*}
for every \(\xi_1\) and \(\xi_2\in \Q^d\). This implies that
\(f_{\ho}^k\) can be extended as a continuous function on \(\R^d\),
which we still denote by
\(f_{\ho}^k\), and that \eqref{lim large cubes eta} holds for
every \(\xi\in\R^N\).

As we show next, by Definition \ref{def:Ueta}   and a change of variables,
we have  for
every \(x\in\R^N\), \(\xi\in \R^d\), \(\rho>0\), and \(k\in\NN\),
 that
\begin{equation}\label{Mc homogenization}
M^{1/k}_c\big(f_\eps,\xi,Q_\rho(x)\big)=\eps^N M^{1/k}_c\big(f,\xi,
Q_{\sfrac{\rho}{\eps}}(\tfrac{x}{\eps})\big)=
\rho^N\Big(\frac\eps\rho\Big)^N M^{1/k}_c\big(f,\xi,
Q_{\sfrac{\rho}{\eps}}\big(\tfrac{\rho}{\eps}(\tfrac{x}{\rho})\big)\big).
\end{equation}
In fact, let \(u_\epsi \in \Ucal^{1/k}_c  (Q_{\sfrac{\rho}{\eps}}(\tfrac{x}{\eps}))\)
and \(V_\epsi \in \Vcal^{1/k}  (Q_{\sfrac{\rho}{\eps}}(\tfrac{x}{\eps}))\)
be such that
\begin{equation*}
\begin{aligned}
& \int_{Q_{\sfrac{\rho}{\eps}}(\tfrac{x}{\eps})}|V_\epsi(y)|^p\,dy
\leq \tfrac1k|Q_{\sfrac{\rho}{\eps}}(\tfrac{x}{\eps})| =\tfrac1k(\tfrac{\rho}{\epsi})^N , \\
&-\sum_{i=1}^N \int_{Q_{\sfrac{\rho}{\eps}}(\tfrac{x}{\eps})}
A^i u_\epsi(y) \cdot \partial_i\psi(y) \, dy = \int_{Q_{\sfrac{\rho}{\eps}}(\tfrac{x}{\eps})}
V_\epsi(y) \cdot \nabla\psi(y)\, dy \quad\text{for every }\psi\in W^{1,q}(Q_{\sfrac{\rho}{\eps}}(\tfrac{x}{\eps});\RR^{l}).
\end{aligned}
\end{equation*}
Define
\begin{equation*}
\begin{aligned}
w_\epsi(z):= u_\epsi(\tfrac{z}{\epsi}) \quad \text{ and } \quad
 W_\epsi(z):= V_\epsi(\tfrac{z}{\epsi}) \quad \text{ for } z\in
 Q_\rho(x).
 \end{aligned}
\end{equation*}
Then, using a change of variables and the definition of \(\Ucal^{1/k}_c (\cdot) \) and \(\Vcal^{1/k}
(\cdot) \), it can be checked that \(w_\epsi
\in L^p(Q_\rho(x);\RR^d)\), \(\supp w_\epsi \subset\subset Q_\rho(x) \), and   \(W_\epsi
\in L^p(Q_\rho(x);\RR^{l\times N})\). Moreover,
\begin{equation*}
\begin{aligned}
&\int_{Q_\rho(x)} w_\epsi(z)\, dz  = \int_{Q_\rho(x)} u_\epsi(\tfrac{z}{\epsi})\, dz =\epsi^N \int_{Q_{\sfrac{\rho}{\eps}}(\tfrac{x}{\eps})} u_\epsi(y)\,
dy =0,\\
&\int_{Q_\rho(x)} |W_\epsi(z)|^p\,dz = \int_{Q_\rho(x)} |V_\epsi(\tfrac{z}{\epsi})|^p\,dz
= \epsi^N \int_{Q_{\sfrac{\rho}{\eps}}(\tfrac{x}{\eps})} |V_\epsi(y)|^p\,\,
dy\leq \tfrac1k \rho^N=\tfrac1k|Q_\rho(x)|,\\
& \int_{Q_\rho(x)} f(\tfrac{z}{\epsi}, \xi+w_\epsi(z))\,dz =
\int_{Q_\rho(x)} f(\tfrac{z}{\epsi}, \xi+u_\epsi(\tfrac{z}{\epsi}))\,dz =\epsi^N \int_{Q_{\sfrac{\rho}{\eps}}(\tfrac{x}{\eps})} f(y,\xi+u_\epsi(y)\,dy.
\end{aligned}
\end{equation*}
Furthermore, given \(\theta\in W^{1,q}(Q_{\rho}(x);\RR^{l})\),
we define 
\(\psi\in
W^{1,q}(Q_{\sfrac{\rho}{\eps}}(\tfrac{x}{\eps});\RR^{l})\) by
setting \(\psi(y):=\tfrac1\epsi \theta(\epsi y)\) for \(y\in Q_{\sfrac{\rho}{\eps}}(\tfrac{x}{\eps})
\), and observe that
\begin{equation*}
\begin{aligned}
-\sum_{i=1}^N
\int_{Q_\rho(x)} A^i w_\epsi(z) \cdot \partial_i\theta(z) \, dz &= -\sum_{i=1}^N
\int_{Q_\rho(x)} A^i u_\epsi(\tfrac{z}{\epsi}) \cdot \partial_i\theta(z) \,dz =-\epsi^N\sum_{i=1}^N
\int_{Q_{\sfrac{\rho}{\eps}}(\tfrac{x}{\eps})} A^i u_\epsi(y) \cdot \partial_i\theta(\epsi y)\,dz\\
&= 
  -\epsi^N\sum_{i=1}^N
\int_{Q_{\sfrac{\rho}{\eps}}(\tfrac{x}{\eps})} A^i u_\epsi(y)
\cdot \partial_i\psi( y)= \epsi^N  \int_{Q_{\sfrac{\rho}{\eps}}(\tfrac{x}{\eps})}
V_\epsi(y) \cdot \nabla\psi(y)\, dy\\
&= \int_{Q_\rho(x)} V_\epsi(\tfrac{z}{\epsi}) \cdot \nabla\theta(z)\,
dz = \int_{Q_\rho(x)} W_\epsi(z) \cdot \nabla\theta(z)\,
dz.
\end{aligned}
\end{equation*}
Hence, recalling  Definition \ref{def:Ueta}, we conclude that  \(M^{1/k}_c\big(f_\eps,\xi,Q_\rho(x)\big)\leq\eps^N M^{1/k}_c\big(f,\xi,
Q_{\sfrac{\rho}{\eps}}(\tfrac{x}{\eps})\big)\). The converse
inequality can be proved similarly, from which  \eqref{Mc homogenization} follows.

Combining \eqref{lim large cubes eta} and \eqref{Mc homogenization},
 recalling that the limit in the former is assumed to exist
and to be independent of \(x\),  we get  for every \(\xi\in\R^d\)
and \(\rho>0\) that\begin{equation*}
f_{\ho}^k(\xi) =\lim_{\eps\to 0^+} \frac{M^{1/k}_c\big(f,\xi,Q_{\sfrac{\rho}{\eps}}(\tfrac{\rho}{\eps}(\tfrac{x}{\rho})\big)}{(\rho/\epsi)^N}
=\lim_{\eps\to 0^+} \frac{M^{1/k}_c\big(f_\eps,\xi,Q_\rho(x)\big)}{\rho^N}.
\end{equation*}
Thus, \eqref{eq:def fhom} gives for every \(\rho>0\) that%
\begin{equation*}
f_{\ho}(\xi) =\sup_{k\in\NN} f_{\ho}^k(\xi)=\sup_{k\in\NN} \lim_{\eps\to 0^+} \frac{M^{1/k}_c\big(f_\eps,\xi,Q_\rho(x)\big)}{\rho^N}=\sup_{\eta>0}
\lim_{\eps\to 0^+} \frac{M^\eta_c\big(f_\eps,\xi,Q_\rho(x)\big)}{\rho^N},
\end{equation*}
where in the last equality we used the monotonicity of \(M^\eta_c\)
with respect to \(\eta\).
 Then, by Theorem \ref{thm:f from M eta}, the function \(f_{\ho}\) belongs
to \(\Fcal_{\lip}\cap\Fcal_{\qc}\)  and (a) and (b) are satisfied.
\end{proof}

\section{Stochastic homogenization}\label{sect:stochastic}
In this section, we study stochastic homogenization problems in
the \(\Acal\)-free setting. 
To this aim, we fix a probability space \((\Omega,\mathcal T,
P)\) 
and a group \((\tau_z)_{z\in\Z^N}\) of \(P\)-preserving transformations
on \((\Omega,\mathcal T, P)\), i.e., 
 a family \((\tau_z)_{z\in\Z^N}\) of \(\mathcal T\)-measurable
bijective maps \(\tau_z\colon\Omega\to\Omega\), with 
 \(P(\tau_z^{-1}(E))=P(E)\) for every \(E\in\mathcal T\) and
every \(z\in\Z^N\), satisfying the
 group property: \(\tau_0={\mathrm {id}}_\Omega\) (the identity
map on \(\Omega\)) and \(\tau_{z+z'}=\tau_z\circ
  \tau_{z'}\) for every \(z,z'\in\Z^N\).
We recall that the group is called ergodic if every set \(E\in
\mathcal T\) with \(\tau_z(E)=E\) for every \(z\in\Z^N\) has
probability \(0\) or \(1\). 

We introduce now the classes of random integrands that we are going
to consider.
\begin{definition}[Stochastically periodic random integrands]\label{sf}
Let \(\SF_{\lip}\) be the collection of functions $f\colon\Omega\times\R^N\times\R^d\to[0,+\infty)$
satisfying the following properties:
\begin{itemize}
\item[(a)] \(f\) is \(\mathcal T\times \mathcal L\times\mathcal
B\)-measurable, where \(\mathcal L\) is the \(\sigma\)-algebra
of Lebesgue measurable subsets of \(\R^N\) and  \(\mathcal B\)
is the Borel \(\sigma\)-algebra of \(\R^d\);
\item[(b)] setting  \(f(\om):=f(\omega,\cdot,\cdot)\), we have
\(f(\om)\in  \Fcal_{\lip}\)  for every \(\omega\in \Omega\);
\item[(c)]  \(f\) is stochastically periodic with respect to
\((\tau_z)_{z\in\Z^N}\), i.e., 
\begin{equation*}
f(\omega,x+z,\xi)=f(\tau_z(\omega),x,\xi)
\end{equation*}
for every \(\omega\in\Omega\), \(x\in\R^N\), \(z\in\Z^N\), and
\(\xi\in\R^d\).
\end{itemize}
Finally, let  \(\SF_{\qc}\)  be the collection of all functions
\(f\in \SF\) such that
\( f(\omega)\in \Fcal_{\qc}\) for \(P\)-a.e.\ \(\omega\in\Omega\).

 We recall that if the vector space \(\mathrm{span}(\Lambda)\) generated by the wave cone \(\Lambda\) coincides with \(\R^d\) then \(\SF_{\qc}\subset \SF_{\lip}\) by Remark~\ref{rem: qc implies Lip} .
\end{definition}

We now introduce the notion of subadditive process. Let \(\mathcal
R\) be the collection of all rectangles of the form
\begin{equation*}
\begin{aligned}%
&[a,b) :=\{x\in\R^N\colon a_i\le x_i<b_i \text{ for }i=1,\cdots,d\}\quad\text{with
}a,b\in\R^N.
\end{aligned}
\end{equation*}

\begin{definition}[Covariant subadditive process] A covariant
subadditive process with respect to \((\tau_z)_{z\in\Z^N}\) is
a function \(\Phi\colon \Om\times\mathcal R\to[0,+\infty)\) with
the following properties:
 \begin{itemize}
 \item[(a)] (measurability) for every \(R\in \mathcal R\), the
function \(\omega\mapsto\Phi(\omega, R)\) is \(\mathcal T\)-measurable
on \(\Omega\);
 \item[(b)] (covariance) $\Phi(\omega,R+z)=\Phi(\tau_z(\omega),R)$
for every $\omega\in\Omega$, $R\in \mathcal R$, and $z\in\Z^N$;
 \item[(c)] (subadditivity) if $R\in \mathcal R$ and  $(R_i)_{i\in
I}\subset \mathcal R$ is a finite partition of $R$, then
\begin{equation*}
\begin{aligned}
 \Phi(\omega,R)\le \sum_{i\in I}\Phi(\omega,R_i)\quad\text{for
every }\omega\in\Om\,;
\end{aligned}
\end{equation*}
 \item[(d)] (boundedness) there exists $c>0$ such that $0\le\Phi(\omega,R)\le
c|R|$ for every $\omega\in\Om$ and $R\in \mathcal R$.
\end{itemize} 
 \end{definition}
 
We will use the following variant of the Subadditive Ergodic
Theorem \cite[Theorem 2.7]{AkKr81}, see also \cite{DaMo86} and
\cite{LiMi02}. 
 
 \begin{theorem}\label{ergodic}
 Let \(\Phi\) be a  covariant  subadditive process with respect to \((\tau_z)_{z\in\Z^N}\).
Then, there exist a \(\mathcal T\)-measurable 
 function \(\varphi\colon\Omega\to[0,+\infty)\) and a set \(\Om'\in\mathcal
T\), with \(P(\Omega')=1\),   such that 
 \begin{equation*}
 \lim_{r\to+\infty}\frac{\Phi(\omega,Q(rx,r))}{r^N}=\varphi(\omega)
 \end{equation*}
 for every \(x\in\R^N\) and every \(\omega\in\Omega'\). If, in
addition, 
\((\tau_z)_{z\in\Z^N}\) is ergodic, then \(\varphi\) is constant
\(P\)-a.e.\ in \(\Omega\).
 \end{theorem}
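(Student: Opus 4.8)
The plan is to reduce the statement to the classical Subadditive Ergodic Theorem of Akcoglu--Krengel \cite[Theorem~2.7]{AkKr81} (in the $\Z^N$ formulation, see also \cite{DaMo86,LiMi02}), which is exactly the version that yields convergence of $\Phi(\omega,R)/|R|$ along \emph{cubes} centred at $rx$. Concretely, the hypotheses (a)--(d) on $\Phi$ say that $\Phi$ is a nonnegative, covariant, subadditive, and uniformly bounded set function on rectangles; these are precisely the hypotheses under which the cited ergodic theorem produces a $\mathcal T$-measurable limit function. So the first and main step is simply to invoke that theorem in the stated form: it gives a $\mathcal T$-measurable $\varphi\colon\Omega\to[0,+\infty)$ and a full-measure set $\Omega_0\in\mathcal T$ such that $\lim_{r\to+\infty}\Phi(\omega,Q(rx,r))/r^N$ exists and equals $\varphi(\omega)$ for every $\omega\in\Omega_0$ and every fixed $x$.

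The one genuine issue is the \emph{order of quantifiers}: the off-the-shelf ergodic theorem, applied with a fixed $x$, gives a null set that may depend on $x$, whereas the statement asks for a single $\Omega'$ that works simultaneously for all $x\in\R^N$. I would handle this in two stages. First, intersecting the countably many full-measure sets obtained by applying the theorem at each $x\in\Q^N$ yields a single $\Omega_1\in\mathcal T$ with $P(\Omega_1)=1$ on which convergence holds for every rational $x$. Second, I would upgrade from $\Q^N$ to $\R^N$ using the boundedness (d) and a geometric comparison: for $x,x'\in\R^N$ with $|x-x'|$ small, the cubes $Q(rx,r)$ and $Q(rx',r)$ differ by a region whose volume is $O(r^{N-1}\cdot r|x-x'|)=O(r^N|x-x'|)$, and by subadditivity together with the bound $\Phi\le c|\cdot|$ one gets $|\Phi(\omega,Q(rx,r))-\Phi(\omega,Q(rx',r))|\le c'\,r^N\,\omega(|x-x'|)$ uniformly in $r$, where $\omega(\cdot)\to 0$. (One makes this rigorous by sandwiching $Q(rx,r)$ between two cubes centred at $rx'$ of side lengths $r(1\mp C|x-x'|)$, partitioning, and using subadditivity in both directions.) This equicontinuity in $x$, combined with convergence on the dense set $\Q^N$, forces convergence for every $x\in\R^N$ to the \emph{same} limit $\varphi(\omega)$, on the same set $\Omega_1$; set $\Omega':=\Omega_1$. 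The main obstacle is thus not deep — it is this uniformity-in-$x$ argument, which needs the quantitative volume estimate for the symmetric difference of concentric cubes of comparable size.

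Finally, for the ergodic case: if $(\tau_z)_{z\in\Z^N}$ is ergodic, then $\varphi$ is $P$-a.e.\ constant. This follows because $\varphi$ is $(\tau_z)$-invariant: from the covariance $\Phi(\omega,R+z)=\Phi(\tau_z(\omega),R)$ one computes, for any $x$ and $z$, that $\Phi(\omega,Q(rx+z,r))=\Phi(\tau_{-z}\omega,Q(rx,r))$ up to replacing $x$ by $x+z/r$; letting $r\to+\infty$ and using the uniformity in the centre just established gives $\varphi(\tau_z\omega)=\varphi(\omega)$ for $P$-a.e.\ $\omega$, for every $z\in\Z^N$. An invariant measurable function under an ergodic group is constant almost everywhere, which is the assertion. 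This step is routine once the uniform-in-centre convergence is in hand, so no further obstacle arises.
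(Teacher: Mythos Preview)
The paper does not give its own proof of this theorem: it is stated as a known variant of the Subadditive Ergodic Theorem and attributed to \cite[Theorem~2.7]{AkKr81}, \cite{DaMo86}, and \cite{LiMi02}. So there is nothing to compare your proposal against in the paper itself.

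Your outline is essentially the standard route to this variant and is correct in spirit. The reduction to Akcoglu--Krengel for each fixed rational centre, the passage to all $x\in\R^N$ via a sandwiching argument between cubes of slightly different side lengths (using subadditivity and the bound $\Phi\le c|\cdot|$), and the invariance argument in the ergodic case are exactly the ingredients worked out in \cite{DaMo86} and \cite{LiMi02}. One small point: you do not need to argue that $\varphi$ is independent of $x$ as a separate step, since the sandwiching between $Q(rx',r(1-C|x-x'|))$ and $Q(rx',r(1+C|x-x'|))$ already forces the $\limsup$ and $\liminf$ at $x$ to agree with $\varphi(\omega)$ computed at any rational $x'$ close to $x$; this simultaneously gives existence of the limit and its independence of the centre. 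Also, in the ergodic step, the invariance $\varphi\circ\tau_z=\varphi$ follows more directly by noting that $Q(rx+z,r)=Q(r(x+z/r),r)$ and using that the limit is independent of the centre, rather than via the covariance identity with $\tau_{-z}$; but your version works too.
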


\begin{lemma}\label{lemma:stofg}
Let  \(f\in \SF\), \(\xi\in\R^d\), and \(\eta>0\). Then, the function
 \(\Phi_\xi^\eta\colon\Omega\times\mathcal R\to[0,+\infty)\)
 defined by
\begin{equation*}
\Phi_\xi^\eta(\om, R):=M_c^\eta(f(\omega),\xi, \interiorR)
\end{equation*}
is a  covariant  subadditive process. 
\end{lemma}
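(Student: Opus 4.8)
The plan is to verify the four defining properties of a covariant subadditive process one at a time, since each reduces to an essentially routine unfolding of Definition \ref{def:Ueta} together with the stochastic periodicity in Definition \ref{sf}(c). Throughout, $R=[a,b)\in\mathcal R$ and $\interiorR$ denotes its interior, which is an open rectangle; note that $M_c^\eta(f(\omega),\xi,\interiorR)$ makes sense because $f(\omega)\in\Fcal$ for every $\omega$ by Definition \ref{sf}(b), and the associated functional lies in $\Ical$ by Remark \ref{rem:Fcal gives Ffrac}, so \eqref{eq:Metac} applies.

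\textit{Boundedness (d).} This is immediate from \eqref{eq:pgrowth Mceta} in Remark \ref{rmk:pLip}: taking $u=0\in\Ucal_c^\eta(\interiorR)$ gives
\[
0\le \Phi_\xi^\eta(\omega,R)=M_c^\eta(f(\omega),\xi,\interiorR)\le c_0(1+|\xi|^p)|\interiorR|=c_0(1+|\xi|^p)|R|,
\]
so we may take $c:=c_0(1+|\xi|^p)$.

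\textit{Measurability (a).} Fix $R$. One must check that $\omega\mapsto M_c^\eta(f(\omega),\xi,\interiorR)$ is $\mathcal T$-measurable. The idea is that the admissible class $\Ucal_c^\eta(\interiorR)$ does not depend on $\omega$, so
\[
M_c^\eta(f(\omega),\xi,\interiorR)=\inf_{u\in\Ucal_c^\eta(\interiorR)}\int_{\interiorR}f(\omega,x,\xi+u(x))\,dx .
\]
By the joint $\mathcal T\times\mathcal L\times\mathcal B$-measurability of $f$ (Definition \ref{sf}(a)) and Fubini's theorem, for each fixed $u\in L^p(\interiorR;\R^d)$ the map $\omega\mapsto\int_{\interiorR}f(\omega,x,\xi+u(x))\,dx$ is $\mathcal T$-measurable. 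To pass to the infimum one restricts to a countable dense subfamily: using the continuity estimate \eqref{eq:pLip220} of the functional $F(\omega,\cdot,\interiorR)$ (which holds uniformly in $\omega$ with the constant $c_2$ of Remark \ref{rem: simpler pLip F}) together with the fact that $\Ucal_c^\eta(\interiorR)$ admits a countable subset $\mathcal D_0$ that is dense for the $L^p$-norm in the relevant sense — one can, e.g., approximate any admissible $u$ by dilating its support slightly and mollifying, which preserves membership in $\Ucal_c^\eta$ up to enlarging $\eta$ negligibly, or more simply invoke separability of $L^p(\interiorR;\R^d)$ and the stability of the constraints — one gets $M_c^\eta(f(\omega),\xi,\interiorR)=\inf_{u\in\mathcal D_0}\int_{\interiorR}f(\omega,x,\xi+u(x))\,dx$, a countable infimum of measurable functions, hence measurable.

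\textit{Covariance (b).} Fix $z\in\Z^N$, $\omega\in\Omega$, $R\in\mathcal R$. Here the key is that translation by $z$ is a bijection between $\Ucal_c^\eta(\mathring{R+z})$ and $\Ucal_c^\eta(\interiorR)$: if $u\in\Ucal_c^\eta(\mathring{R+z})$ with associated $V\in\Vcal^\eta(\mathring{R+z})$ satisfying \eqref{Au divU}, then $u'(x):=u(x+z)$ and $V'(x):=V(x+z)$ satisfy $\supp u'\subset\subset\interiorR$, $\int_{\interiorR}u'\,dx=0$, $\|V'\|_{L^p}^p=\|V\|_{L^p}^p<\eta|R|=\eta|\interiorR|$, and \eqref{Au divU} on $\interiorR$ (translation commutes with the constant-coefficient operators $A^i\partial_i$). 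Using the change of variables $x\mapsto x+z$ and the stochastic periodicity $f(\omega,x+z,\eta)=f(\tau_z(\omega),x,\eta)$ from Definition \ref{sf}(c),
\[
\int_{\mathring{R+z}}f(\omega,x,\xi+u(x))\,dx=\int_{\interiorR}f(\omega,x+z,\xi+u'(x))\,dx=\int_{\interiorR}f(\tau_z(\omega),x,\xi+u'(x))\,dx .
\]
Taking the infimum over $u$ on both sides and using that the correspondence $u\leftrightarrow u'$ is a bijection of admissible classes yields $M_c^\eta(f(\omega),\xi,\mathring{R+z})=M_c^\eta(f(\tau_z(\omega)),\xi,\interiorR)$, i.e.\ $\Phi_\xi^\eta(\omega,R+z)=\Phi_\xi^\eta(\tau_z(\omega),R)$.

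\textit{Subadditivity (c).} This is the only step with genuine content, and it is precisely the reason $M_c^\eta$ was introduced in place of $M$ (cf.\ the discussion around \eqref{subadd cubes}). Let $(R_i)_{i\in I}$ be a finite partition of $R$ into rectangles. Fix $\delta>0$ and, for each $i\in I$, pick $u_i\in\Ucal_c^\eta(\mathring{R_i})$ with associated $V_i\in\Vcal^\eta(\mathring{R_i})$ such that $\int_{\mathring{R_i}}f(\omega,x,\xi+u_i(x))\,dx<\Phi_\xi^\eta(\omega,R_i)+\delta$. Define $u$ on $\interiorR$ by setting $u:=u_i$ on $\mathring{R_i}$ and $u:=0$ on the (Lebesgue-null) union of the shared boundaries, and similarly $V:=V_i$ on $\mathring{R_i}$. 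Since each $\supp u_i\subset\subset\mathring{R_i}$, the function $u$ has $\supp u\subset\subset\interiorR$ away from all interfaces, $\int_{\interiorR}u\,dx=\sum_i\int_{\mathring{R_i}}u_i\,dx=0$, and $\|V\|_{L^p(\interiorR)}^p=\sum_i\|V_i\|_{L^p(\mathring{R_i})}^p<\eta\sum_i|R_i|=\eta|R|=\eta|\interiorR|$, so $V\in\Vcal^\eta(\interiorR)$; moreover \eqref{Au divU} holds on $\interiorR$ because for any $\psi\in W^{1,q}(\interiorR;\R^l)$, testing the relation on each $\mathring{R_i}$ (legitimate since $\supp u_i\subset\subset\mathring{R_i}$, so $\psi$ may be freely multiplied by a cutoff) and summing gives $\langle\widetilde\Acal_{\interiorR}u,\psi\rangle=\sum_i\int_{\mathring{R_i}}V_i\cdot\nabla\psi\,dx=\int_{\interiorR}V\cdot\nabla\psi\,dx$. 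Hence $u\in\Ucal_c^\eta(\interiorR)$, so by \eqref{eq:Metac} and additivity of the integral over the partition,
\[
\Phi_\xi^\eta(\omega,R)\le\int_{\interiorR}f(\omega,x,\xi+u(x))\,dx=\sum_{i\in I}\int_{\mathring{R_i}}f(\omega,x,\xi+u_i(x))\,dx<\sum_{i\in I}\Phi_\xi^\eta(\omega,R_i)+\delta|I|.
\]
Letting $\delta\to0^+$ yields $\Phi_\xi^\eta(\omega,R)\le\sum_{i\in I}\Phi_\xi^\eta(\omega,R_i)$ for every $\omega\in\Omega$. The main obstacle to watch is the gluing in (c): one must be sure that concatenating the compactly supported $u_i$ does not create a distributional defect along the interfaces $\partial R_i\cap\interiorR$ — this is exactly guaranteed by the strict inclusions $\supp u_i\subset\subset\mathring{R_i}$ built into the definition of $\Ucal_c^\eta$, which leave a collar of zeros around each interface, so the concatenated $u$ and $V$ are genuinely admissible on $\interiorR$ and no boundary terms appear; the analogous care in (b) is only a harmless change of variables. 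This completes the proof that $\Phi_\xi^\eta$ is a covariant subadditive process.
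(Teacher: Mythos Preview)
Your proof is correct and follows essentially the same approach as the paper's: boundedness via \eqref{eq:pgrowth Mceta}, measurability by reducing the infimum to a countable dense family and using the joint measurability of \(f\), and subadditivity by gluing near-optimal competitors \(u_i\in\Ucal_c^\eta(\mathring R_i)\) and their associated fields \(V_i\) across the partition, exploiting that the condition \(\|V\|_{L^p}^p<\eta|\cdot|\) is additive. Your treatment of covariance is more explicit than the paper's (which simply calls it ``clear''), and your error term \(\delta|I|\) in place of the paper's \(\delta|R|\) is an inessential bookkeeping difference; note also that in your verification of \eqref{Au divU} on \(\interiorR\) the cutoff trick is unnecessary, since the restriction of any \(\psi\in W^{1,q}(\interiorR;\R^l)\) to \(\mathring R_i\) already lies in \(W^{1,q}(\mathring R_i;\R^l)\).
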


\begin{proof}
Using  \eqref{eq:pLip} and  Remark~\ref{rem:Ueta},  we
see
that the infimum
  in \eqref{eq:Metac} defining \(M_c^\eta(f(\omega),\xi, \interiorR)\)  can be
obtained 
using a suitable dense sequence of functions \(u\). Since for
every \(u\in L^p(R;\R^d)\), 
the function \(\omega\mapsto \int_R f(\omega, x, \xi+u(x))\,dx\)
 is  \(\mathcal T\)-measurable by (a) of Definition \ref{sf},
we conclude that  \(\omega\mapsto\Phi_\xi^\eta(\omega,R)\) is
 \(\mathcal T\)-measurable.
The boundedness and  the covariance property are clear.

We now prove subadditivity.
Fix \(\omega\in\Omega\), \(R\in\mathcal R\), a finite
partition \((R_i)_{i\in I}\) of \(R\), and \(\delta>0\). For
every \(i\in I\), there  exist
\(u_i\in L^{p}(R_i;\R^d)\) and \(V_{i}\in  L^{p}(R_i;\RR^{l\times
N}) \)  such
that \(\supp u_i \subset \subset \interiorR{}_i\), \(\int_{R_i}
u_i \,dx = 0\), 
\begin{align*}
&\displaystyle  \langle \widetilde \Acal_{\interiorR{}_i} u_i,\psi
\rangle 
=  \int_{R_{1,i}}  V_{i}\cdot
\nabla \psi\,dx
\quad\text{for every }\psi\in W^{1,q}(\interiorR{}_i;\R^{l}),
\\
&\displaystyle  \int_{R_i} |V_{i}|^pdx
< \eta|R_i|,
\\
&\displaystyle \int_{R_i}f(\omega, x, \xi +u_i(x))\,dx <M_c^\eta(f(\omega),\xi,
\interiorR{}_i)+ \delta|R_i|.
\end{align*}
We define \(u\in L^{p}(R;\R^d)\)
and \( V\in  L^{p}(R;\RR^{l\times N}) \) by setting 
\(u:=u_i\)
and \( V:=V_{i}\) on
 \(R_i\) for every \(i\in I\). Then, 
%
\begin{equation*}
\langle \widetilde\Acal_{\interiorR{}} u,\psi \rangle = 
\int_{R}  V\cdot
\nabla \psi\,dx
\quad\text{for every }\psi\in W^{1,q}(\interiorR{};\R^{l}),
\end{equation*}
Moreover, by additivity, we have    
\begin{align*}
&\int_{R} u\, dx=0,\quad \int_{R} |V |^pdx <
\eta|R|,\quad \text{and}
\\
&\displaystyle \int_{R}f(\omega, x, \xi +u(x))\,dx <\sum_{i\in
I}M_c^\eta(f(\omega),\xi, \interiorR{}_i)+ \delta|R|.
\end{align*}
By \eqref{eq:Metac}, we obtain \(M_c^\eta(f(\omega),\xi, \interiorR{})\le
\sum_{i\in I}M_c^\eta(f(\omega),\xi, \interiorR{}_i)+ \delta|R|\).
Hence, due to the arbitrariness of \(\delta>0\), we conclude that \(M_c^\eta(f(\omega),\xi,
\interiorR{})\le \sum_{i\in I}M_c^\eta(f(\omega),\xi, \interiorR{}_i)\),
which proves the subadditivity of \(\Phi_\xi^\eta(\om, R)\).
\end{proof}

We are now in a position to state our main result on stochastic
homogenization of \(\Acal\)-free integral functionals.
Given a stochastic integrand \(f\in\SF_{\lip}\) and  \(\eps>0\),
we consider the stochastic integrands
\(f_\eps\colon\Omega\times\R^N\times\R^d\to [0,+\infty)\) defined
by
\begin{equation*}
f_\eps(\omega, x,\xi):= f(\omega, x/\eps,\xi)
\end{equation*}
for every \(x\in\R^N\) and \(\xi\in\R^d\).  Setting  \(f_\eps(\om):=f_\eps(\omega,\cdot,\cdot)\),
we have that \(f_\eps(\om)\in \Fcal_{\lip}\)  for every 
\(\omega\in \Omega\), so we can consider 
 the functionals \(F_\eps(\omega)\in \Ical_{\lip}\) associated with
\(f_\eps(\omega)\) by \eqref{eq:Ffree}, and the corresponding
  \(\Acal\)-free functionals \(F_\eps^{\Acal}(\omega)\) introduced
in Definition \ref{FAcal}.

\begin{theorem}\label{prop: stocf}
Let \(f\in  \SF_{\lip}\)  and, for every \(\epsi>0\) and 
for every \(\omega\in\Omega\), let    \(F_\eps(\omega)\in  \Ical_{\lip}\) and   \(F_\eps^{\Acal}(\omega)\)
be the corresponding functionals. 
Then, there exists
a set \(\Om'\in {\mathcal T}\), with \(P(\Om')=1\), such that
for every  \(k\in\NN\),  \(\omega\in\Omega'\), and \(\xi\in\Q^d\), the limit
\begin{equation}\label{lim large cubes eta omega}
f_{\ho}^k(\omega,\xi):=\lim_{r\to+\infty} \frac{M^{1/k}_c(f(\omega),\xi,Q_r(rx))}{r^N}
\end{equation}
exists and is independent of \(x\). Moreover, for every \(\omega\in\Omega'\),
the function
\(f_{\ho}^k(\omega,\cdot)\) can be extended as a continuous function
on \(\R^d\), which we still denote by
\(f_{\ho}^k(\omega,\cdot)\), and \eqref{lim large cubes eta omega}
holds for every \(\xi\in\R^d\).

Let \(f_{\ho}\colon\Omega\times \R^d\to [0,+\infty)\) be the
function defined for every \(\xi\in\R^d\) by
\begin{equation*}
f_{\ho}(\omega,\xi):=\begin{cases}
\displaystyle \sup_{k\in\NN}f_{\ho}^k(\omega,\xi)= \lim_{k\to
\infty} f_{\ho}^k(\omega,\xi)&\text{if }\omega\in\Omega'\text{
and }\xi\in\R^d,
\\
\displaystyle  \int_{\Omega'}f_{\ho}(\omega',\xi)dP(\omega')
 &\text{if }\omega\in\Omega\setminus \Omega'\text{ and }\xi\in\R^d.
\end{cases}
\end{equation*}
Then, \(f_{\ho}\in \SF_{\lip}\cap\SF_{\qc}\) and, setting  \(f_{\ho}(\om):=f_{\ho}(\omega,\cdot)\)
 for every \(\omega\in\Omega\),  the corresponding functionals
\(F_{\ho}(\omega)\in \Ical_{\lip}\cap \Ical_{\qc}\) defined by \eqref{eq:Ffree} and
\(F_{\ho}^{\Acal}(\omega)\) introduced in Definition \ref{FAcal}
satisfy
the following properties:
\begin{itemize}
\item[(a)] for every \(\om\in\Omega'\) and every \(D\in\Ocal(\RR^N)\),
the family  \((F_\eps(\omega,\cdot,D))_{\eps>0}\)
\(\Gamma\)-converges as \(\eps\to 0^+\) to
 \(F_{\ho}(\omega,\cdot,D)\) with respect to the topology induced
by \(\Vert\cdot\Vert_{D}^{\Acal}\)
on \(L^p(D;\RR^d)\);
\item[(b)]
for every  for every \(\om\in\Omega'\) and \(D\in\Ocal(\R^N)\),
 the family  \((F_\eps^{\Acal}(\omega,\cdot,D))_{\eps>0}\)
\(\Gamma\)-converges as \(\eps\to 0^+\) to
 \(F_{\ho}^{\Acal}(\omega,\cdot,D)\) in \(\ker\Acal_D\) with
respect to
the weak topology of \(L^p(D;\RR^d)\).
\end{itemize}

 If, in addition, 
\((\tau_z)_{z\in\Z^N}\) is ergodic, then we can select  \(\Omega'\) so that  
\(f_{\ho}^k\) and \(f_{\ho}\) do not depend on \(\omega\).
\end{theorem}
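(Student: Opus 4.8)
The plan is to feed the subadditive process of Lemma \ref{lemma:stofg} into the Subadditive Ergodic Theorem (Theorem \ref{ergodic}) and then combine the resulting almost-sure limits with the deterministic homogenization result Theorem \ref{thm:homogenization eta}. First I would fix, for each $k\in\NN$ and each $\xi\in\Q^d$, the covariant subadditive process $\Phi_\xi^{1/k}(\omega,R):=M_c^{1/k}(f(\omega),\xi,\interiorR)$ provided by Lemma \ref{lemma:stofg}, and apply Theorem \ref{ergodic} to it. This yields a $\mathcal T$-measurable function $\omega\mapsto f_{\ho}^k(\omega,\xi)$ and a full-probability set $\Omega_{k,\xi}\in\mathcal T$ on which $\lim_{r\to+\infty}\Phi_\xi^{1/k}(\omega,Q_r(rx))/r^N=f_{\ho}^k(\omega,\xi)$ for every $x\in\R^N$; note $\interiorR$ of the half-open rectangle $[rx-\tfrac r2,rx+\tfrac r2)$ is exactly $Q_r(rx)$, so this is precisely \eqref{lim large cubes eta omega}. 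Intersecting over the countably many pairs $(k,\xi)\in\NN\times\Q^d$ gives a single set $\Omega'\in\mathcal T$ with $P(\Omega')=1$ on which \eqref{lim large cubes eta omega} holds simultaneously for all $k$ and all rational $\xi$, and independently of $x$.

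Next, for each fixed $\omega\in\Omega'$, I would verify that the hypotheses of Theorem \ref{thm:homogenization eta} are met by the deterministic integrand $f(\omega)\in\Fcal$: the limit \eqref{lim large cubes eta} with $f$ replaced by $f(\omega)$ exists for every $x\in\R^N$, every $\xi\in\Q^d$, and every $k\in\NN$, and is independent of $x$. This is exactly what we just proved. Applying Theorem \ref{thm:homogenization eta} therefore gives, for each such $\omega$, that $f_{\ho}^k(\omega,\cdot)$ extends to a continuous function on $\R^d$ (by the Lipschitz estimate \eqref{eq:pLip Mceta}, which holds uniformly, so the modulus of continuity is independent of $\omega$), that \eqref{lim large cubes eta omega} holds for all $\xi\in\R^d$, that $f_{\ho}(\omega,\cdot)=\sup_k f_{\ho}^k(\omega,\cdot)\in\Fcal_{\qc}$, and that conclusions (a) and (b) of Theorem \ref{prop: stocf} hold with $F_{\ho}(\omega)$ and $F_{\ho}^{\Acal}(\omega)$ the functionals associated with $f_{\ho}(\omega)$. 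For $\omega\in\Omega\setminus\Omega'$ the integrand $f_{\ho}(\omega,\xi)$ is defined by the constant-in-$\omega$ average $\int_{\Omega'}f_{\ho}(\omega',\xi)\,dP(\omega')$; since this is independent of $\omega$ and depends continuously on $\xi$ (dominated convergence plus the uniform bounds \eqref{eq:pgrowth Mceta} and \eqref{eq:pLip Mceta}), it lies in $\Fcal$, and as an average of $\Acal$-quasiconvex functions (Jensen/Fatou, as in the proof of Theorem \ref{thm:unconstrained main}) it is in $\Fcal_{\qc}$.

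It then remains to check the measurability conditions (a)--(c) of Definition \ref{sf} for $f_{\ho}$. Condition (b) holds by the previous paragraph. For (a), $f_{\ho}$ is a countable supremum over $k$ of the functions $(\omega,\xi)\mapsto f_{\ho}^k(\omega,\xi)$ on $\Omega'\times\R^d$, each of which is $\mathcal T$-measurable in $\omega$ (from Theorem \ref{ergodic}) for rational $\xi$ and continuous in $\xi$, hence jointly $\mathcal T\times\mathcal B$-measurable; extended by a constant on the null set $\Omega\setminus\Omega'$, joint measurability persists. For (c), stochastic periodicity of $f_{\ho}$ follows because $f_{\ho}$ does not depend on $x$ at all (it is the homogenized integrand), so $f_{\ho}(\omega,x+z,\xi)=f_{\ho}(\omega,\xi)$; one must still check $f_{\ho}(\tau_z\omega,\xi)=f_{\ho}(\omega,\xi)$, which is the statement that the limit \eqref{lim large cubes eta omega} is $\tau$-invariant — but this is built into the covariance property (b) of $\Phi_\xi^{1/k}$ together with the $x$-independence of the limit: $f_{\ho}^k(\tau_z\omega,\xi)=\lim_r \Phi_\xi^{1/k}(\tau_z\omega,Q_r(0))/r^N=\lim_r\Phi_\xi^{1/k}(\omega,Q_r(z))/r^N=\lim_r\Phi_\xi^{1/k}(\omega,Q_r(r\cdot\tfrac zr))/r^N=f_{\ho}^k(\omega,\xi)$ after enlarging $\Omega'$ to be $\tau$-invariant (replace $\Omega'$ by $\bigcap_{z\in\Z^N}\tau_z(\Omega')$, still of full measure). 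Finally, if $(\tau_z)_{z\in\Z^N}$ is ergodic, Theorem \ref{ergodic} gives that each $f_{\ho}^k(\cdot,\xi)$ is $P$-a.e.\ constant; intersecting the corresponding full-measure sets over $k\in\NN$ and $\xi\in\Q^d$ and using continuity in $\xi$ shows $f_{\ho}$ is $\omega$-independent on a $\tau$-invariant full-measure set, which we take as the new $\Omega'$. The main obstacle is purely bookkeeping: organizing the single exceptional null set $\Omega'$ so that it simultaneously handles all countably many $(k,\xi)$, is $\tau$-invariant, and supports the deterministic theorem — the analytic content is entirely contained in Lemma \ref{lemma:stofg}, Theorem \ref{ergodic}, and Theorem \ref{thm:homogenization eta}.
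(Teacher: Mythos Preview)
Your proposal is correct and follows essentially the same approach as the paper: apply Lemma \ref{lemma:stofg} and Theorem \ref{ergodic} over the countable family $(k,\xi)\in\NN\times\Q^d$ to produce $\Omega'$, then invoke Theorem \ref{thm:homogenization eta} pointwise in $\omega\in\Omega'$. You actually supply more detail than the paper on verifying conditions (a) and (c) of Definition \ref{sf} for $f_{\ho}$; the one place to be a little more careful is the step $\lim_r\Phi_\xi^{1/k}(\omega,Q_r(z))/r^N=f_{\ho}^k(\omega,\xi)$, since $Q_r(z)=Q_r(r\cdot\tfrac{z}{r})$ has a moving center---this is handled either by the standard fact that the Akcoglu--Krengel limit is $\tau$-invariant a.s., or by a direct subadditivity/boundedness comparison between $Q_r(z)$ and $Q_r(0)$.
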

\begin{proof}
The existence of \(\Om'\in {\mathcal T}\), with \(P(\Om')=1\),
such that \eqref{lim large cubes eta omega} holds follows from
Theorem \ref{ergodic} and Lemma \ref{lemma:stofg}. The properties
of \(f_{\ho}^k(\omega,\xi)\) and \(f_{\ho}(\omega,\xi)\) for
\(\omega\in\Omega'\) are given by Theorem \ref{thm:homogenization
eta}, which implies also that (a) and (b) hold and that  \(f_{\ho}(\omega)\in
\Fcal_{\lip}\cap\Fcal_{\qc}\) for every \(\omega\in\Omega\); hence,  \(f_{\ho}\in
\SF_{\lip}\cap
\SF_{\qc}\).

 If
\((\tau_z)_{z\in\Z^N}\) is ergodic, then Theorem \ref{ergodic}
implies that for every \(\xi\in \Q^N\), the function   
\(\omega\mapsto f_{\ho}^k(\omega,\xi)\) is constant \(P\)-a.e.\
in \(\Omega\). Therefore, we can select  \(\Omega'\)
so that  
\(f_{\ho}^k\)  does not depend on \(\omega\) for \(\xi\in\Q^N\),
and so does its continuous extension to \(\R^N\) and its limit
\(f_{\ho}\).
\end{proof}

Rai\c t\v a

\section*{Acknowledgements}
 The authors thank Jean-Fran\c cois Babadjian for the proof of Theorem~\ref{th:Babadjian}, which provides the integral representation of \(F\) without assuming a \(p\)-Lipschitz continuity. 
The authors thank the hospitality of the Center of Nonlinear
Analysis (CNA) at Carnegie Mellon University (Pittsburgh, USA) and of Erwin Schr\"odinger
International Institute for Mathematics and Physics (Vienna,
Austria).

 The research of Gianni Dal Maso was partially supported by the
National Research Project PRIN 2022J4FYNJ ``Variational methods
for stationary and evolution problems
with singularities and interfaces'' funded by the Italian Ministry
of University and Research.  When this paper was written, 
Gianni Dal Maso  was  member of GNAMPA of INdAM.

The research of Rita Ferreira was partially supported by King Abdullah University of Science and Technology (KAUST) baseline funds and KAUST
OSR-CRG2021-4674.

The research of Irene Fonseca was partially supported by the
National Science Foundation (NSF) under grants DMS-2108784, DMS-2205627, and DMS-2343490.


\bibliographystyle{abbrv}

\end{document}